\theoremstyle{plain}
\newtheorem*{theorem*}{Theorem}
\newtheorem*{lemma*} {Lemma}
\newtheorem*{corollary*} {Corollary}
\newtheorem*{proposition*} {Proposition}
\newtheorem{theorem}{Theorem}[section]
\newtheorem{lemma}[theorem]{Lemma}
\newtheorem{proposition}[theorem]{Proposition}
\theoremstyle{remark}
\newtheorem*{remark}{Remark}
\newtheorem*{claim}{Claim}
\theoremstyle{definition}
\def\be{\begin{equation}}
\def\ee{\end{equation}}
\def \K {\mathbf{K}}
\def \Z {\mathbf{Z}}
\def \G {\mathbf{G}}
\def\zg{\Z[\G]}
\def\R{\Bbb{R}}
\newcommand{\smfrac}[2]{\mbox{\footnotesize$\displaystyle\frac{#1}{#2}$}} 
\newcommand{\tmfrac}[2]{\mbox{\large$\frac{#1}{#2}$}} 
\def\eps{\epsilon}
\def\Q{\Bbb{Q}}
\def\K{\Bbb{K}}
\def\id{\op{id}}
\def\Z{\Bbb{Z}}
\def\N{\Bbb{N}}
\def\part{\partial}
\def\ll{\langle}
\def\rr{\rangle}
\def\g{\gamma}
\def\bp{\begin{pmatrix}}
\def\sm{\setminus}
\def\ep{\end{pmatrix}}
\def\bn{\begin{enumerate}}
\def\en{\end{enumerate}}
\def\ba{\begin{array}}
\def\ea{\end{array}}
\def\S{\Sigma}
\def\wti{\tilde}
\def\fr12{\frac{1}{2}}
\def\im{\op{Im}}
\def\op{\operatorname}
\def\ker{\op{ker}}
\def\hom{\op{Hom}}
\def\deg{\op{deg}}
\def\zt{\Z\tpm}
\def\ztt{\Z[t^{-1},t\doubler}
\def\zttleft{\Z\doublel t^{-1},t]}
\def\f{\Bbb{F}}
\def\K{\Bbb{K}}
\def\G{\Gamma}
\def\ol{\overline}
\def\ab{\op{ab}}
\newcommand{\smsum}[2]{\mbox{\footnotesize$\displaystyle\sum\limits_{#1}^{#2}$}} 
\newcommand{\tmsum}[2]{\mbox{$\textstyle \sum\limits_{#1}^{#2}$}} 
\def\zt{\Z[t^{\pm 1}]}
\def\K{\Bbb{K}}
\def\i{\mathfrak{I}}
\def\zgphi{\Z[\G_\phi]}
\def\zphig{\Z_{\phi}[\G\doubler}
\def\zphipi{\Z_{\phi}[\pi\doubler}
\def\zpsig{\Z_{\psi}[\G\doubler}
\def\ztt{\Z[t^{-1},t\doubler}
\def\i{\iota}
\def\cmtbf#1{} \def\cmt#1{}
\def\doubler{]\hspace{-0.05cm}]}
\def\doublel{[\hspace{-0.05cm}[}
\def\zphigright{\Z_{\phi}[\G\doubler}
\def\zphipiright{\Z_{\phi}[\pi\doubler}
\def\gphi{\G_{\phi}}
\begin{document}

\title{Novikov homology and noncommutative Alexander polynomials}

\author{Stefan Friedl}
\address{Fakult\"at f\"ur Mathematik\\ Universit\"at Regensburg\\   Germany}
\email{sfriedl@gmail.com}
\date{\today}

\begin{abstract}
In the early 2000's  Cochran and Harvey introduced non-commutative Alexander polynomials for 3-manifolds. Their degrees give strong lower bounds on the Thurston norm. In this paper we make the case that the vanishing of a certain Novikov--Sikorav homology module is the correct notion of a monic non-commutative Alexander polynomial. Furthermore we will use the opportunity to give new proofs of several statements about Novikov--Sikorav homology in the three-dimensional context.
\end{abstract}
\maketitle

\begin{center}
\emph{In memory of Tim Cochran.}\\[1cm]
\end{center}

\section{Summary of results}
Let $N$ be a 3-manifold. (Throughout this paper all 3-manifolds are understood to be connected, orientable, compact with empty or toroidal boundary.)
A class $\phi\in H^1(N;\Z)=\hom(\pi_1(N),\Z)$ is called \emph{fibered} if there exists a fibration $p\colon N\to S^1$ such that $p_*=\phi\colon \pi_1(N)\to \pi_1(S^1)=\Z$. A rational cohomology class $\phi$ in $H^1(N;\Q)$ is called fibered, if there exists an $n\in \N$ such that $n\phi$ is a fibered class in $H^1(N;\Z)$.

The complexity of  a surface $\S$ with connected components $\S_1,\dots,\S_k$ 
is defined to be
\[ \chi_-(\S):=\sum_{i=1}^d \max\{-\chi(\S_i),0\}.\]
Given a 3-manifold $N$ and $\phi \in H^1(N;\Z)$ the \emph{Thurston norm  of $\phi$}    is defined as
\[ \|\phi\|_T:= \min\{\chi_-(\S)\, |\, \S \subset N\mbox{ properly embedded and dual to }\phi\}.\]
Thurston~\cite{Th86} showed that this function defines a seminorm on $H^1(N;\Z)$. An elementary argument shows that it extends to a seminorm $x_N$ on rational cohomology $H^1(N;\Q)$.
We say that a properly embedded oriented surface $\Sigma$ in $N$ is \emph{Thurston norm minimizing} if the following hold:
\bn
\item $\chi_-(\Sigma)=\|\op{PD}([\Sigma])\|_T$,
\item there is no non-empty collection of components of $\Sigma$ that is (with the given orientation) null-homologous.
\en
If (1) is satisfied and if $\Sigma$ does not contain any components with non-negative Euler characteristic, then (2) is trivially satisfied.

The following theorem is a well-known fiberedness criterion.
The equivalence of the first two statements is essentially a consequence of Stallings' theorem~\cite{St61}, the resolution of the Poincar\'e Conjecture by Perelman and some straightforward group theory. The equivalence of the second and the third statement are also well-known, in particular it is a straightforward consequence of Proposition~\ref{prop:surface-connected} and \cite[Lemma~5.1]{EL83}.

\begin{theorem}\label{thm:fiberedness-condition}
Let $N$ be a 3-manifold and let $\phi\in H^1(N;\Z)$ be a primitive class. Then the following are equivalent:
\bn
\item $\phi$ is fibered,
\item there exists a connected  surface $\S$ dual to $\phi$ such that the two inclusion induced maps
\[ \iota_\pm\colon \pi_1(\S)\to \pi_1(N\sm \S\times (-1,1))\]
are isomorphisms, 
\item  any  Thurston norm minimizing surface $\S$ dual to $\phi$ is connected and for any such $\S$  the two inclusion induced maps
\[ \iota_\pm\colon \pi_1(\S)\to \pi_1(N\sm \S\times (-1,1))\]
are isomorphisms.
\en
\end{theorem}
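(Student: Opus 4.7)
The plan is to prove $(1)\Leftrightarrow (2)$ via Stallings' theorem and $(2)\Leftrightarrow (3)$ via the cited Proposition and \cite[Lemma~5.1]{EL83}, following the author's own roadmap. The direction $(1)\Rightarrow (2)$ is immediate: take $\S$ to be the fiber of a fibration realising $\phi$, which is connected because $\phi$ is primitive and $N$ is connected; then $N\sm \S\times(-1,1)$ is a genuine product $\S\times[-1,1]$, so both $\iota_\pm$ are even homotopy equivalences.

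For $(2)\Rightarrow (1)$, I would set $M=N\sm \S\times(-1,1)$. Since $\phi\ne 0$ and $\S$ is connected, $\S$ is non-separating and $M$ is connected, and regluing the two copies $\S_\pm$ of $\S$ in $\partial M$ exhibits $\pi_1(N)$ as an HNN extension of $\pi_1(M)$ with associated subgroups $\iota_\pm(\pi_1(\S))$ and stable letter mapping to a generator of $\phi$. The assumption that $\iota_\pm$ are isomorphisms collapses this HNN extension to a semi-direct product $\pi_1(M)\rtimes\Z$, so $\ker(\phi)\cong\pi_1(\S)$ is finitely generated; primitivity of $\phi$ rules out $[\S]=0$, hence $\S\ne S^2$ and $\pi_1(\S)$ is not $\Z/2$. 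Stallings' fibration theorem~\cite{St61} then yields a fibration $N\to S^1$ realising $\phi$, at least when $N$ is irreducible. For general $N$ I would appeal to Perelman's resolution of the Poincar\'e Conjecture to cap off any homotopy-sphere prime summand without altering $\pi_1$ or $\phi$, reducing to the irreducible case.

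For the equivalence $(2)\Leftrightarrow (3)$, the direction $(3)\Rightarrow (2)$ is immediate from the existence of a Thurston-norm minimizing surface dual to $\phi$. In view of $(1)\Leftrightarrow (2)$ already established, the remaining direction amounts to $(1)\Rightarrow (3)$: applying Proposition~\ref{prop:surface-connected} shows that any Thurston-norm minimizing surface $\S$ dual to a fibered class is connected, and then \cite[Lemma~5.1]{EL83} identifies such an $\S$ up to isotopy with a fiber of a fibration realising $\phi$, so $N\sm \S\times(-1,1)\cong \S\times[-1,1]$ and the $\iota_\pm$ are again homotopy equivalences.

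The main obstacle I anticipate is the clean execution of $(2)\Rightarrow (1)$: unpacking the HNN structure to certify the finite-generation and non-$\Z/2$ hypotheses of Stallings' theorem, and disposing of the reducible case via Perelman. Everything else is bookkeeping given the cited results and Proposition~\ref{prop:surface-connected}.
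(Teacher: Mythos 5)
Your proposal is correct and follows exactly the route the paper itself indicates (it gives no detailed proof, only the citations you use): Stallings' theorem plus Perelman and the HNN/semidirect-product argument for $(1)\Leftrightarrow(2)$, and Proposition~\ref{prop:surface-connected} together with \cite[Lemma~5.1]{EL83} for $(2)\Leftrightarrow(3)$. The only step worth making explicit is that invoking Proposition~\ref{prop:surface-connected} requires $\tau(N,\phi)\ne 0$, which for a fibered class follows from Theorem~\ref{thm:alexander-fibered}~(1); also note that $[\S]\ne 0$ does not by itself rule out $\S=S^2$, though this is harmless since $\pi_1(\S)$ is never $\Z/2$ for orientable $\S$ and the simply connected case is covered by Stallings anyway.
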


Given a 3-manifold $N$ and $\phi\in H^1(N;\Z)=\hom(\pi_1(N),\ll t\rr)$ we can consider the corresponding Reidemeister-Milnor torsion $\tau(N,\phi)\in \Q(t)$ as defined in \cite{Mi62,Tu86,Tu01,FV10}. Before we continue we state the following proposition, which presumably is well-known, but for which we could not find a reference and whose proof is surprisingly fiddly.

\begin{proposition}\label{prop:surface-connected}
Let $N$ be a  3-manifold and let $\phi\in H^1(N;\Z)$ be a primitive class. If $\tau(N,\phi)\ne 0$, then any Thurston norm minimizing surface dual to $\phi$ is connected.
\end{proposition}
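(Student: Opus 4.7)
The plan is to exploit $\tau(N,\phi) \ne 0$ through the vanishing $H_*(N;\Q(t)) = 0$, feed this into a Mayer--Vietoris sequence for the decomposition of $N$ along $\Sigma$, and then use the dual graph of the decomposition together with condition (2) and primitivity of $\phi$.

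Set $M := N \setminus \Sigma \times (-1,1)$ and let $\iota_\pm \colon \Sigma \to M$ be the two sided boundary inclusions. Any loop in $M$ or in $\Sigma$ is disjoint from $\Sigma$, so $\phi$ is trivial on each $\pi_1(M_j)$ and each $\pi_1(\Sigma_i)$. The infinite cyclic cover $\tilde N \to N$ associated to $\phi$ then admits a cellular short exact sequence of $\Q[t^{\pm 1}]$-chain complexes
\[ 0 \longrightarrow C_*(\Sigma;\Q)\otimes_\Q\Q[t^{\pm 1}] \xrightarrow{\ \iota_+ - t\iota_-\ } C_*(M;\Q)\otimes_\Q\Q[t^{\pm 1}] \longrightarrow C_*(\tilde N;\Q) \longrightarrow 0.\]
Tensoring with $\Q(t)$ and using $H_*(N;\Q(t)) = 0$ forces $\iota_+ - t\iota_-$ to be a $\Q(t)$-isomorphism in every degree. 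Inspecting degree $0$ and comparing $\Q(t)$-ranks (with $\phi$ trivial on both $\Sigma$ and $M$) gives
\[ k \;:=\; \#\pi_0(\Sigma) \;=\; \#\pi_0(M).\]

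Form the dual graph $G$: vertices are components of $M$, edges are components of $\Sigma$, each $\Sigma_i$ joining the two components of $M$ on its two sides. Since $N$ is connected $G$ is connected, and the count above yields $\beta_1(G) = 1$, so $G$ is unicyclic. A component $\Sigma_i$ is a bridge of $G$ iff it separates $N$ iff $[\Sigma_i] = 0 \in H_2(N,\partial N;\Z)$; condition (2) applied to singletons rules the last out, so $G$ has no bridges and is therefore exactly a single cycle. At each vertex $M_j$ of this cycle, the boundary identity $[\partial M_j] = 0 \in H_2(N,\partial N;\Z)$ reduces, after a small sign calculation for the two incident $\Sigma$-pieces, either to $[\Sigma_i] = [\Sigma_{i'}]$ (one edge enters, the other leaves $M_j$) or to $[\Sigma_i] + [\Sigma_{i'}] = 0$ (both enter or both leave); condition (2) applied to pairs forbids the latter. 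Walking once around the cycle therefore gives $[\Sigma_1] = [\Sigma_2] = \cdots = [\Sigma_k] =: [\Sigma_\ast]$.

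Consequently $\op{PD}(\phi) = [\Sigma] = k \cdot [\Sigma_\ast]$, and primitivity of $\phi$ forces $k = 1$, i.e.\ $\Sigma$ is connected. The main obstacle is the clean chain-level setup of the short exact sequence above when both $M$ and $\Sigma$ may be disconnected, and the orientation book-keeping in the boundary identity for each $\partial M_j$ that pins down which alternative of ``$[\Sigma_i]=[\Sigma_{i'}]$'' versus ``$[\Sigma_i]+[\Sigma_{i'}]=0$'' occurs; these are not deep, but they are plausibly the ``fiddly'' ingredients alluded to in the statement.
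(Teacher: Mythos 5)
Your proof is correct, and it takes a genuinely different route from the paper's. The paper first establishes a weight-modification lemma in the style of Turaev (Lemma~\ref{lem:find-weight}): from a disconnected norm-minimizing $\Sigma$ it produces components $\Sigma_1,\dots,\Sigma_k$ and integers $n_i$ with $[\Sigma]=\sum n_i[\Sigma_i]$, with $N\sm \nu(\Sigma_1\cup\dots\cup\Sigma_k)$ connected and $\sum|n_i|>1$; it then invokes \cite[Proposition~3.4]{FK06} (Lemma~\ref{lem:fk06}), which says that $\tau(N,\phi)\ne 0$ forces exactly one $n_i\ne 0$ once the complement is connected, and concludes by primitivity. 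You instead extract the combinatorics directly: acyclicity of $C_*(N;\Q(t))$ fed into the Mayer--Vietoris sequence for the splitting along $\Sigma$ gives $\#\pi_0(\Sigma)=\#\pi_0(N\sm\nu\Sigma)$ from the $H_0$-ranks, so the dual graph is connected with first Betti number one; condition (2) applied to singletons kills bridges, so the graph is a single cycle, and condition (2) applied to pairs pins down the signs in the relations $[\partial \ol{M_j}]=0\in H_2(N,\partial N;\Z)$, forcing all $[\Sigma_i]$ to coincide, whence primitivity gives $k=1$. Both arguments rest on the same underlying input (the $H_0$-computation in the infinite cyclic cover, which is also what drives \cite[Proposition~3.4]{FK06}) and use condition (2) and primitivity in the same essential way; the difference is that you replace Turaev's weight induction by a dual-graph analysis, which makes the proof self-contained where the paper black-boxes the counting lemma, at the cost of the orientation bookkeeping around the cycle that you correctly flag as the fiddly step.
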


In the next theorem we will recall the well-known fact that $\tau(N,\phi)$ contains fiberedness information.
Before we state that theorem we introduce a few more definitions.
\bn
\item Let $p(t)=a_kt^k+a_{k+1}t^{k+1}+\dots+a_lt^l\in \zt$ be a Laurent polyonmial with $a_k\ne 0$ and $a_l\ne 0$.
\bn
\item we define the degree $\deg(p(t))=l-k$,
\item we say $p(t)$ is \emph{monic} if $a_k\in \{\pm 1\}$ and  $a_l \in \{\pm 1\}$,
\item we say $p(t)$ is \emph{top-monic} if  $a_l \in \{\pm 1\}$.
\en
\item For $f(t)=p(t)q(t)^{-1}\in \Q(t)$ with $p(t),q(t)\in \zt$ and $p(t),q(t)\ne 0$ we define the degree of $f(t)$ as  the difference of the degrees of $p(t)$ and $q(t)$. Furthermore we say $f(t)\in \Q(t)$ is \emph{monic} if it is the quotient of  two monic Laurent polynomials in $\zt$. 
\item  Given a properly embedded surface $\Sigma$ in a 3-manifold $N$
we often implicitly pick a thickening $\S\times [-1,1]\subset N$ which is orientation preserving. We then denote by  $\nu \S:=\S\times (-1,1)$ the corresponding open tubular neighborhood of $\S$. Furthermore we denote the two inclusion maps 
\[ \Sigma\,\to \,\Sigma\times \{\pm 1\}\,\subset\, N\sm \S\times (-1,1)\,=\,N\sm \nu \Sigma\]  by $\iota_{\pm}$.
\en

\begin{theorem}\label{thm:alexander-fibered}
Let $N$ be a 3-manifold and let  $\phi\in H^1(N;\Z)$ be a primitive class.
\bn
\item If $\phi$ is fibered, then $\tau(N,\phi)$ is monic and 
$ \deg(\tau(N,\phi))=\|\phi\|_T$. 
\item Conversely, if $\tau(N,\phi)$ is monic and 
$ \deg(\tau(N,\phi))=\|\phi\|_T$, then  any  Thurston norm minimizing surface $\S$ dual to $\phi$ is connected and  the two inclusion induced maps
\[ \iota_\pm\colon H_1(\S;\Z)\to  H_1(N\sm \nu \S;\Z)\]
are isomorphisms.
\en
\end{theorem}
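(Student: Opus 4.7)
The plan is to express $\tau(N,\phi)$ in each direction as an alternating product of characteristic polynomials $\det(tA-B)$ coming from a natural chain-level map, and then to read off degree and monicity information. For part~(1), realize $N$ as the mapping torus of a monodromy $\psi\colon\Sigma\to\Sigma$. A CW structure adapted to a fiber identifies $C_*(N;\zt)$ with the mapping cone of
\[
t\psi_*-I\colon C_*(\Sigma;\Z)\otimes_\Z\zt\longrightarrow C_*(\Sigma;\Z)\otimes_\Z\zt,
\]
so multiplicativity of Reidemeister torsion yields
\[
\tau(N,\phi)=\prod_k\det\bigl(t\psi_{k*}-I\bigr)^{(-1)^{k+1}}\in\Q(t)^{\times}/\pm t^{\Z}.
\]
Since $\psi$ is a homeomorphism of an oriented surface, each $\psi_{k*}$ has determinant $\pm 1$, so each factor is monic of degree $\dim_\Q H_k(\Sigma;\Q)$; the alternating sum of these degrees is $-\chi(\Sigma)=\|\phi\|_T$, establishing~(1).

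For part~(2), the hypothesis forces $\tau(N,\phi)\ne 0$, so by Proposition~\ref{prop:surface-connected} any Thurston norm minimizing surface $\Sigma$ dual to $\phi$ is connected. Set $M:=N\sm\nu\Sigma$, with two boundary copies $\iota_\pm(\Sigma)\subset M$; the analogous CW argument identifies $C_*(N;\zt)$ with the mapping cone of $t\iota_+-\iota_-\colon C_*(\Sigma;\Z)\otimes_\Z\zt\to C_*(M;\Z)\otimes_\Z\zt$, producing the Mayer--Vietoris long exact sequence
\[
\cdots\to H_k(\Sigma;\Z)\otimes_\Z\zt\xrightarrow{\,t\iota_+-\iota_-\,} H_k(M;\Z)\otimes_\Z\zt\to H_k(N;\zt)\to\cdots.
\]
Since $\tau(N,\phi)\ne 0$ implies $H_*(N;\Q(t))=0$, the induced maps on rationalized homology are isomorphisms over $\Q(t)$ (forcing $\dim_\Q H_k(\Sigma;\Q)=\dim_\Q H_k(M;\Q)$) and
\[
\tau(N,\phi)=\prod_k\det\bigl(t\iota_+-\iota_-\bigr)^{(-1)^{k+1}}.
\]

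The key elementary fact is that for $\Q$-linear endomorphisms $A,B$ of a $d$-dimensional space, $\det(tA-B)$ has degree at most $d$ with equality iff $\det A\ne 0$; in that case the leading coefficient is $\det A$ and the constant term is $(-1)^d\det B$. The $k=0$ factor is always $t-1$, and (when $\Sigma$ is closed) the $k=2$ factor is forced under $\tau(N,\phi)\ne 0$ to be a unit multiple of $t+1$ (using that $[\iota_+(\Sigma)]+[\iota_-(\Sigma)]=0$ in $H_2(M)$); stripping these off reduces the analysis to the $k=1$ factor. The degree hypothesis $\deg\tau(N,\phi)=\|\phi\|_T$ combined with the elementary fact then forces $\iota_+$ to be a $\Q$-iso on $H_1(\Sigma;\Q)$, and monicity of $\tau(N,\phi)$ forces $\det\iota_+^{(1)}=\pm 1$ and (from the constant coefficient) $\det\iota_-^{(1)}=\pm 1$, which is exactly the statement that $\iota_\pm\colon H_1(\Sigma;\Z)\to H_1(M;\Z)$ are isomorphisms. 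I expect the main obstacle to be the passage from these rational determinant conditions to the integral conclusion, since $H_1(M;\Z)$ may in principle carry $\Z$-torsion invisible to $\tau(N,\phi)\in\Q(t)$; the expected resolution is to run the torsion calculation with integer coefficients, using that $H_1(\Sigma;\Z)$ is free so that the monicity condition genuinely constrains the Smith normal form of $t\iota_+-\iota_-$.
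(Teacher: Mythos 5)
Part~(1) of your proposal is the standard Milnor mapping-torus computation and is correct (up to the degenerate cases where the fiber has non-negative Euler characteristic, which the statement itself glosses over). The gap is in part~(2), and it is exactly the one you flag at the end --- but your proposed resolution does not close it. Two things go wrong with the homology-level identity $\tau(N,\phi)\doteq\prod_k\det(t\iota_{+,k}-\iota_{-,k})^{(-1)^{k+1}}$. First, multiplicativity of torsion for the Mayer--Vietoris short exact sequence of based complexes produces a correction factor, namely the ratio of the torsions of the non-acyclic complexes $C_*(M;\Q)\otimes\Q(t)$ and $C_*(\S;\Q)\otimes\Q(t)$ computed with respect to cellular bases and the chosen homology bases; with integral homology bases this is an alternating product of orders of torsion subgroups of $H_*(M;\Z)$, a nonzero rational that need not be $\pm1$. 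Hence monicity of $\tau(N,\phi)$ does not directly force $\det\iota_{\pm,1}=\pm1$ even on free parts. Second, and more fundamentally, the conclusion that $\iota_\pm\colon H_1(\S;\Z)\to H_1(M;\Z)$ are isomorphisms forces $H_1(M;\Z)$ to be torsion-free, and no determinant of maps between $H_*(\cdot;\Q)$ can detect that. Your suggested fix (``run the torsion calculation with integer coefficients'') is not available as stated: the complexes $C_*(\S;\Z)\otimes\zt$ and $C_*(M;\Z)\otimes\zt$ are not acyclic, so they carry no integral Reidemeister torsion, and the assertion that monicity constrains the Smith normal form of $t\iota_+-\iota_-$ on homology is precisely what has to be proved.

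The repair is to descend to the chain level, which is how the paper itself proceeds: Theorem~\ref{thm:alexander-fibered}~(2) is the case $\G=\ll t\rr$ of Theorem~\ref{thm:iso-intro}, proved in Section~\ref{section:opentwo} (alternatively one cites \cite[Proposition~3.2]{FV11}). There one writes the boundary map $\partial_2$ of $N\sm\nu\S$ in blocks $R^-,R^+,R'$ which, for $\G=\ll t\rr$, are honest integer matrices. Lemma~\ref{lem:novikov-zero} converts monicity of $\tau(N,\phi)$ into vanishing of $H_1(N;\ztt)$, hence into invertibility of $\left(\begin{smallmatrix}R^-+tR^+\\ R'\end{smallmatrix}\right)$ over the Novikov ring; the degree hypothesis, via Lemma~\ref{lem:dim}, makes $\left(\begin{smallmatrix}R^-\\ R'\end{smallmatrix}\right)$ and $\left(\begin{smallmatrix}R^+\\ R'\end{smallmatrix}\right)$ non-degenerate; and Lemma~\ref{lem:inv} then upgrades this to invertibility of these integer matrices over $\Z$, which yields the isomorphisms on $H_1(\cdot;\Z)$ outright, torsion included. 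Your degree bookkeeping (including the analysis of the $k=0,2$ factors) survives in this framework, but the leading/constant-coefficient step must be replaced by the matrix argument.
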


This theorem has many parents; certainly Neuwirth \cite{New60,New65} and Rapaport~\cite{Rap60} in the 1960s were aware of it in the case of knots. For a proof in the context of general 3-manifolds we refer to \cite{FK06,Fr14} for the first part and Proposition~\ref{prop:surface-connected} together with \cite[Proposition~3.2]{FV11} for the second part.

 The second statement of the theorem says, that  
 if $\tau(N,\phi)$ is monic and  if furthermore
$ \deg(\tau(N,\phi))=\|\phi\|_T$, then  any  Thurston norm minimizing surface $\S$ `looks homologically like a fiber'. For that conclusion to hold one needs the monicness of $\tau(N,\phi)$ and the degree information on $\tau(N,\phi)$. 

In \cite{Co04,Ha05} Cochran and Harvey introduced non-commutative analogues of the usual Alexander polynomials and showed that their degrees give lower bounds on the Thurston norm and contained fiberedness information. 
In the subsequent discussion we will use the slight reformulation of these invariants given in \cite{Fr07}.

Let $N$ be a 3-manifold and let $\phi\in H^1(N;\Q)=\hom(\pi_1(N),\Q)$. We say that a homomorphism $\gamma\colon \pi_1(N)\to \G$ is \emph{admissible} if  $\gamma$ is an epimorphism such that $\phi$ factors through $\gamma$. By a slight abuse of notation we denote the corresponding unique homomorphism $\Gamma\to \Q$ by $\phi$ as well. We say that an admissible homomorphism
is \emph{tfea-admissible}, if $\Gamma$ is a  torsion-free elementary amenable group. The appeal of such homomorphisms lies in the fact that the group ring $\Z[\G]$ of a
torsion-free elementary amenable admits an  Ore localization  $\K(\G)$.

Given such a tfea-admissible homomorphism we can consider the corresponding torsion $\tau(N,\gamma)\in \K(\G)^\times_{\ab}\cup \{0\}$, where $\K(\G)^\times_{\ab}$ denotes the abelianization of the multiplicative group $\K(\G)^\times=\K(\G)\sm \{0\}$.
As we will see in Section~\ref{section:degrees}, the homomorphism $\phi\colon \G\to \Q$ gives rise to a degree function
\[ \deg_\phi\colon \K(\G)^\times_{\ab}\cup \{0\} \to \Q\cup \{-\infty\}.\]
The invariant $\deg_\phi(\tau(N,\gamma))$  basically  corresponds to the degrees of the noncommutative (or `higher-order') Alexander polynomials first introduced by Cochran \cite{Co04} and Harvey \cite{Ha05}. We refer to Section~\ref{section:relation-to-ch} for details.

As an example, if $\phi\in H^1(N;\Z)=\hom(\pi_1(N),\Z)=\hom(\pi_1(N),\ll t\rr)$, then $\gamma=\phi$  is admissible, and in this case the two definitions of the Reidemeister torsion agree and we have
$\deg_\phi(\tau(N,\phi))=\deg(\tau(N,\phi))$.

The following theorem is proved in \cite{Co04,Ha05}.

\begin{theorem}\label{thm:cochran-fibering-obstruction}
If $N$ is a 3-manifold and  $\phi\in H^1(N;\Z)$ is a primitive fibered class, then for any tfea-admissible homomorphism $\gamma$ we have
\[ \deg_\phi(\tau(N,\gamma))=\|\phi\|_T.\]
\end{theorem}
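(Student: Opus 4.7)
The plan is to exploit the mapping torus structure of $N$ and reduce the degree equality to a non-commutative version of the familiar identity $\deg(\det(tA-I))=n$.

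First, since $\phi$ is primitive and fibered, pick a fibration $p\colon N\to S^1$ with $p_*=\phi$ and fiber $\Sigma$; then $N$ is the mapping torus of a monodromy $f\colon \Sigma\to\Sigma$. This gives the short exact sequence
\[
1\to \pi_1(\Sigma)\to \pi_1(N)\xrightarrow{\phi}\Z\to 1,
\]
which under $\gamma$ descends to $1\to\Gamma'\to\Gamma\xrightarrow{\phi}\Z\to 1$ with $\Gamma':=\ker(\phi\colon\Gamma\to\Z)$. The restriction $\gamma|_{\pi_1(\Sigma)}$ surjects onto $\Gamma'$ because $\gamma$ is surjective and $\pi_1(\Sigma)=\gamma^{-1}(\Gamma')$. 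Fix a lift $t\in\Gamma$ of a generator of $\Z$, so that $\Z[\Gamma]$ presents as a twisted Laurent polynomial ring $\Z[\Gamma']_\sigma[t^{\pm 1}]$. After cellular approximation pick a CW structure on $\Sigma$ for which $f$ is cellular, and let $\widetilde{\Sigma}$ denote the $\Gamma'$-cover of $\Sigma$ associated to $\gamma|_{\pi_1(\Sigma)}$. The induced mapping torus CW structure on $N$ identifies the cellular chain complex of the $\Gamma$-cover of $N$ with the algebraic mapping cone of
\[
tf_*-1\colon C_*(\widetilde{\Sigma})\otimes_{\Z[\Gamma']}\Z[\Gamma]\longrightarrow C_*(\widetilde{\Sigma})\otimes_{\Z[\Gamma']}\Z[\Gamma].
\]

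Since $f$ is a cellular homeomorphism, $f_*$ is a $\Z[\Gamma']$-module automorphism on each $C_i(\widetilde{\Sigma})$, and so $tf_*-1$ is ``monic of $t$-degree one'' and becomes invertible over $\K(\Gamma)$. Hence $C_*(\widetilde{N})\otimes_{\Z[\Gamma]}\K(\Gamma)$ is acyclic, and the multiplicativity of torsion for mapping cones gives
\[
\tau(N,\gamma)=\prod_i\bigl[tf_*|_{C_i(\widetilde{\Sigma})}-1\bigr]^{(-1)^{i+1}}\in\K(\Gamma)^{\times}_{\ab},
\]
with the sign convention calibrated against the commutative case $\tau(N,\phi)\doteq \Delta_N(t)/(t-1)$ for a knot exterior.

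The theorem then reduces to a non-commutative degree lemma, which I expect to be the technical core of Section~\ref{section:degrees}: for any $\Z[\Gamma']$-module automorphism $A$ of $(\Z[\Gamma'])^n$, the element $tA-I$ satisfies $\deg_\phi([tA-I])=n$ in $\K(\Gamma)^{\times}_{\ab}$. Granting this,
\[
\deg_\phi\tau(N,\gamma)=\sum_i(-1)^{i+1}\rk_{\Z[\Gamma']}C_i(\widetilde{\Sigma})=-\chi(\Sigma)=\chi_-(\Sigma)=\|\phi\|_T,
\]
where the last equalities use that a connected non-sphere fiber of a fibration is Thurston-norm minimizing. The hard part is the degree lemma itself: in the commutative case it is visible from $\det(tA-I)=\det(A)t^n+\cdots+(-1)^n$, but over the Ore skew field $\K(\Gamma)$ one must work with the Dieudonn\'e determinant modulo the commutator subgroup and rule out unexpected cancellations in the abelianization. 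The cleanest approach is to factor through a Novikov--Sikorav completion of $\Z[\Gamma]$ in the direction of $\phi$, where $tA-I$ is literally invertible by the geometric series $-\sum_{k\geq 0}(tA)^k$, and then transfer the degree calculation back to $\K(\Gamma)$; the paper's emphasis on Novikov--Sikorav homology suggests precisely this route.
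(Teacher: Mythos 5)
First, a point of reference: the paper does not prove Theorem~\ref{thm:cochran-fibering-obstruction} itself, it cites \cite{Co04,Ha05} (the closest internal analogue is the proof of Theorem~\ref{thm:fib} in Section~\ref{section:fib}, which uses exactly your mapping-torus chain complex but needs much less from it). Your skeleton --- mapping cone of $tf_*-1$, multiplicativity of torsion, a degree lemma for $\det(tA-I)$ --- is the right shape, but it contains one genuinely false step that is load-bearing: \emph{``Since $f$ is a cellular homeomorphism, $f_*$ is a $\Z[\Gamma']$-module automorphism on each $C_i(\widetilde{\Sigma})$.''} A cellular approximation of the monodromy is merely a cellular map homotopic to it, not a cellular isomorphism, and in general no finite CW structure can be chosen on which a given (e.g.\ pseudo-Anosov) monodromy permutes cells: such a map would act on $C_*(\widetilde\Sigma)$ by monomial matrices, which is incompatible with, say, eigenvalues of $f_*$ on $H_1(\Sigma;\Z)$ off the unit circle. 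So $f_*$ on $C_1$ is just some Fox-derivative matrix over $\Z[\Gamma_\phi]$ with no a priori invertibility. This matters because your degree lemma gives $\deg_\phi\det(tA-I)=n$ only when $A$ is invertible over $\K(\G_\phi)$ (this is essentially the converse of the paper's Lemma~\ref{lem:dim}); otherwise one only gets $\deg\le n$, and since the $C_1$ term enters your alternating product with positive sign, any deficit there makes the computed degree strictly smaller than $-\chi(\Sigma)$. The Novikov-completion trick you invoke at the end does not close this: inverting $tA-I$ by the geometric series uses only that the constant term $-I$ is a unit, and a Novikov ring controls the \emph{bottom} of the support of an element, not the top, so it certifies invertibility (hence that the degree is defined) but says nothing about the degree equalling $n$. (A smaller issue: $-\chi(\Sigma)=\chi_-(\Sigma)$ fails for sphere and disk fibers, which need to be excluded or treated separately.)

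The gap is repairable, but by different means. In the bounded case the monodromy induces an automorphism $\alpha$ of the free group $\pi_1(\Sigma)$; the Fox-calculus chain rule gives $J(\alpha)\cdot\alpha_*(J(\alpha^{-1}))=\id$, and since $\gamma\circ\alpha$ equals $\gamma$ followed by conjugation by $t$ inside $\G$, the matrix $\gamma(J(\alpha))$ is invertible over $\Z[\G_\phi]$, which is exactly the hypothesis your degree lemma needs; the closed case requires an extra argument for the $2g\times 2g$ block. The proofs in \cite{Co04,Ha05,Fr07} sidestep chain-level invertibility altogether by working homologically: the infinite cyclic cover of $N$ dual to $\phi$ is $\Sigma\times\R\simeq\Sigma$, so $H_i(N;\K(\G_\phi)[t^{\pm1}])\cong H_i(\Sigma;\K(\G_\phi))$ as $\K(\G_\phi)$-modules; for a nontrivial coefficient system $H_0$ and $H_2$ vanish, so $\dim_{\K(\G_\phi)}H_1=-\chi(\Sigma)=\|\phi\|_T$, and the identity $\deg_\phi\tau(N,\gamma)=\sum_i(-1)^{i+1}\dim_{\K(\G_\phi)}H_i(N;\K(\G_\phi)[t^{\pm1}])$ (\cite[Theorem~1.1]{Fr07}) finishes the argument. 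I would recommend either adopting that route or explicitly inserting the Fox-calculus invertibility statement in place of the cellular-homeomorphism claim.
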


The theorem generalizes the aforementioned result that for a primitive fibered class $\phi$ the equality $\deg(\tau(N,\phi))=\|\phi\|_T$ holds. The degrees $ \deg_\phi(\tau(N,\gamma))$, with somewhat different notation, have been studied in great  detail by many authors \cite{CT08,Ha06,FK08a,FKK12,FH07,Fr07,FST15,LM06,LM08,Ho14,Tu02}.
So far the invariants $\deg_\phi(\tau(N,\gamma))\in \Z\cup \{-\infty\}$ are the only one's which have been extracted from the non-commutative setup. In particular there is no notion of `monicness' for these non-commutative invariants. 

Our goal in this paper is to introduce a natural companion to these noncommutative invariants which measures `monicness' in the sense that a generalization of 
Theorem~\ref{thm:alexander-fibered} to the non-commutative setup holds.

The idea hereby is to use Novikov-Sikorav homology as introduced by Novikov~\cite{No81} and~Sikorav \cite{Si87}.
Given a group $\G$ and a homomorphism $\phi\colon \G\to \Q$ Sikorav \cite{Si87} defined
\[ \zphig\,\, =\,\,\left\{ \ba{c}\mbox{all functions}\\f\colon \G\to \Z \ea \, \Big| \, \ba{c}\mbox{ for any $C\in \R$ there exist only finitely many}
\\\mbox{$g\in \G$ with $f(g)\ne 0$ and $\phi(g)<C$}\ea\right\}.\]
It is often helpful to  thinks of elements $\zphig$ as formal linear combinations of elements in $\G$. With this point of view  it is straightforward to verify that the `naive' multiplication on $\zphig$ makes sense and that it turns $\zphig$ into a ring. 
If $\Gamma$ is the infinite cyclic group generated by $t$ and if $\phi(t)=1$, then we make the canonical identification
\[ \Z_{\phi}[\ll t\rr\doubler\,\,=\,\,\Z[t^{-1},t\doubler\,\,=\,\,
\Big\{ \textstyle{\sum\limits_{i=k}^\infty} a_it^i\,|\, k\in \Z, a_i\in \Z\Big\}.\]
The ring on the right hand side is often referred to as the \emph{Novikov ring}.

Given a 3-manifold $N$, a primitive class $\phi\in H^1(N;\Z)=\hom(\pi_1(N),\Z)$ and an admissible homomorphism $\gamma\colon \pi_1(N)\to \Gamma$ we can consider the corresponding Novikov-Sikorav homology $H_1(N;\zphig)$. 

The following well-known theorem shows that the Novikov-Sikorav homology modules $H_1(N;\zphig)$ contain information about fiberedness.
For completeness' sake we provide a proof in Section~\ref{section:homology-zero-if-fibered}.

\begin{theorem} \label{thm:fib}
Let $N$ be a 3--manifold and let $\phi \in H^1(N;\Q)$. 
If $\phi$ is fibered, then for any admissible homomorphism $\gamma\colon \pi_1(N)\to \G$ we have
\[ H_1(N;\zphig)=0.\]
\end{theorem}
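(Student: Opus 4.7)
The plan is to exploit the mapping torus description of $N$ to write $C_*(N;\Z[\G])$ as the mapping cone of a single operator, and then to invert that operator over $\zphig$ by a Novikov geometric series. First I would reduce to the primitive integral case: the ring $\zphig$ is unchanged under rescaling $\phi$ by a positive rational number, so, $\phi\in H^1(N;\Q)$ being fibered, we may replace $\phi$ by its integral primitive rescaling and assume there is a fibration $p\colon N\to S^1$ with $p_*=\phi$. Then $N$ is the mapping torus of a monodromy $f\colon\S\to\S$, and pushing the semidirect decomposition $\pi_1(N)=\pi_1(\S)\rtimes\Z$ through $\gamma$ yields $\G=\G_1\rtimes\Z$, where $\G_1=\gamma(\pi_1(\S))=\ker(\phi\colon\G\to\Z)$. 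I would fix once and for all a lift $t\in\G$ of $1\in\Z$.

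The next step is a Wang-type identification. Give $\S$ a CW structure for which $f$ is cellular; the mapping torus construction then puts a CW structure on $N$ consisting of the cells of $\S$ together with their products with the interval. Let $\widetilde\S$ denote the cover of $\S$ with deck group $\G_1$ and $F\colon\widetilde\S\to\widetilde\S$ the lift of $f$ determined by $t$. Inspecting the attaching maps of the product cells one finds that $C_*(N;\Z[\G])$ is chain homotopy equivalent to the mapping cone of
\[ 1-tF_*\colon\; C_*(\widetilde\S)\otimes_{\Z[\G_1]}\Z[\G]\,\longrightarrow\, C_*(\widetilde\S)\otimes_{\Z[\G_1]}\Z[\G]. \]
Tensoring with $\zphig$ over $\Z[\G]$ reduces the vanishing of $H_*(N;\zphig)$ to showing that the induced endomorphism $1-tF_*$ is invertible. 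Taking the cells of $\S$ as a free $\Z[\G]$-basis, the matrix entries of $tF_*$ lie in $\Z[\G]$ and all have $\phi$-weight equal to $1$; hence the matrix entries of $(tF_*)^k$ have $\phi$-weight $\ge k$, and the formal geometric series
\[ \sum_{k\ge 0}(tF_*)^k \]
converges entrywise inside $\zphig$ by the very definition of the Novikov--Sikorav condition. Its limit is a two-sided inverse of $1-tF_*$, the mapping cone is thus acyclic, and in particular $H_1(N;\zphig)=0$.

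The main technical point is the middle step. The Wang-style identification is classical when $\gamma=\phi$ and the relevant cover is the infinite cyclic one, but in the fully equivariant setting one has to keep careful track of the $\G_1\rtimes\Z$ coefficient structure and of the fact that $F$ is only equivariant up to conjugation, namely $F\sigma F^{-1}=t\sigma t^{-1}$ for $\sigma\in\G_1$. Once this bookkeeping is pinned down, the geometric series argument is essentially formal, and the same proof in fact yields the stronger vanishing $H_i(N;\zphig)=0$ in every degree.
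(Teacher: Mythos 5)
Your proposal is correct and follows essentially the same route as the paper: both reduce to a primitive integral class, use the mapping torus CW structure to exhibit the relevant boundary operator as $1-tF_*$ with $F_*$ defined over $\Z[\G_\phi]$, and invert it over $\zphig$ by the Novikov geometric series (this inversion is the paper's Lemma~\ref{lem:inv}). The only cosmetic difference is that you package the chain complex as a mapping cone of a single operator, whereas the paper writes out the boundary matrices $A_1,A_2,A_3$ explicitly and invokes Lemmas~\ref{lem:turaev} and~\ref{lem:h0123}.
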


Thus we see that  Novikov-Sikorav homology gives fiberedness obstructions. 
The following lemma shows that in the special case $\gamma=\phi$ we recover some of  the fiberedness obstruction 
that is contained in the Reidemeister torsion $\tau(N,\phi)\in \Q(t)$.

\begin{lemma}\label{lem:novikov-zero}
Let $N$ be a 3-manifold  and let  
\[ \phi\in H^1(N;\Z)=\hom(\pi_1(N),\Z)=\hom(\pi_1(N),\ll t\rr)\] be a primitive class. Then $H_1(N;\ztt)=0$ if and only if $\tau(N,\phi)$ is monic.
\end{lemma}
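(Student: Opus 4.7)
My plan is to split the proof into an algebraic step and a topological step: (i) show that $\tau(N,\phi) \in \ztt^\times$ iff $\tau(N,\phi)$ is monic, using the Poincar\'e-duality symmetry of the Reidemeister torsion; and (ii) show that $H_1(N;\ztt)=0$ iff $\tau(N,\phi)\in \ztt^\times$, via a chain-complex argument on the infinite cyclic cover.

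For (i), I first record that an element $p \in \zt$ is a unit in $\ztt$ exactly when its lowest coefficient is $\pm 1$, and a unit in the reverse Novikov ring $\zttleft = \{\sum_{i \le M} a_i t^i\}$ exactly when its highest coefficient is $\pm 1$. Writing $\tau = p/q$ with $p, q \in \zt$ coprime in the UFD $\zt$, a direct computation shows that $\tau \in \ztt^\times$ iff both $p$ and $q$ are bottom-monic, and that $\tau$ is monic in the sense of the paper iff $p$ and $q$ are both top- and bottom-monic, i.e.\ iff $\tau \in \ztt^\times \cap (\zttleft)^\times$. Now Poincar\'e duality for 3-manifolds gives the standard symmetry
\[ \tau(N,\phi)(t^{-1}) = \pm t^k \, \tau(N,\phi)(t) \]
in $\Q(t)^\times / \pm \zt^\times$, and the substitution $t \mapsto t^{-1}$ defines a ring isomorphism $\ztt \to \zttleft$ sending units to units. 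Combining these, $\tau \in \ztt^\times$ iff $\tau \in (\zttleft)^\times$, so $\tau \in \ztt^\times$ iff $\tau$ is monic.

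For (ii), let $C_* := C_*(N;\zt)$ be the cellular chain complex of the infinite cyclic cover of $N$, a finite free $\zt$-complex. Since $\ztt$ is flat over $\zt$ (as the localization at $t$ of the $t$-adic completion $\Z[[t]]$), we have $H_*(N;\ztt)=H_*(C_* \otimes_\zt \ztt)$. Primitivity of $\phi$ provides $g \in \pi_1(N)$ with $\phi(g)=1$, so $t - 1$ lies in the image of $\partial_1$ and is a unit in $\ztt$; hence $H_0(N;\ztt)=0$. Similarly, using a CW-model with $C_3 = 0$ when $\partial N \ne \varnothing$, or Poincar\'e duality in the closed case, $H_3(N;\ztt)=0$. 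Since $\chi(N)=0$ and $H_0 = H_3 = 0$ over $\Q((t))$, vanishing of $H_1(N;\ztt)$ forces $H_2(N;\Q((t)))=0$; because $H_2(N;\ztt)$ is torsion-free (as a submodule of the free module $C_2 \otimes \ztt$ in the boundary case, and via a further Poincar\'e-duality argument in the closed case), we get $H_2(N;\ztt)=0$, so $C_* \otimes \ztt$ is acyclic. The Reidemeister torsion of this acyclic $\ztt$-complex lies in $\ztt^\times$ and, under $\ztt \hookrightarrow \Q((t))$, matches the image of $\tau(N,\phi) \in \Q(t)^\times$, giving $\tau(N,\phi) \in \ztt^\times$. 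Conversely, if $\tau(N,\phi) \in \ztt^\times$, then reducing $C_* \otimes \ztt$ to a single square matrix $M$ representing $\partial_2$ modulo $\ker \partial_1$ (whose determinant equals $\tau(N,\phi)$ up to a unit of $\zt$) shows $M$ is invertible over $\ztt$, so $H_1(N;\ztt)=0$.

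The main obstacle will be step (ii), specifically ruling out $\ztt$-torsion in $H_2(N;\ztt)$ in order to upgrade the vanishing of $H_2(N;\ztt) \otimes_\ztt \Q((t))$ to genuine vanishing over $\ztt$. In the boundary case this is immediate from a CW-structure with $C_3=0$; in the closed case I expect to invoke Poincar\'e duality to identify $H_2(N;\ztt)$ with a cohomology group of the dual complex, and conclude via the vanishing of $H_1$ together with an exact-sequence argument.
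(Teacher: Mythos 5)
Your overall strategy is sound and parallel to, but not identical with, the paper's proof. Both arguments ultimately reduce to the fact that a square matrix presenting $H_1$ becomes invertible over $\ztt$ exactly when the torsion is a unit there, and both need Poincar\'e duality to pass between the two Novikov completions $\ztt$ and $\zttleft$. The difference is where duality enters: the paper applies it at the level of homology ($H_1(N;\ztt)=0$ iff $H_1(N;\zttleft)=0$, its Lemma~\ref{lem:leftright}) and then characterizes monicness via invertibility of the single matrix $B'$ coming from Lemma~\ref{lem:chain-complex-n}, while you apply it at the level of the torsion itself via the symmetry $\tau(t^{-1})\doteq\tau(t)$ and then analyze a coprime fraction $p/q$. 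Your route buys a statement that is independent of any particular CW-model; the paper's route avoids entirely the arithmetic of coprime fractions, since its torsion is presented as $\det(B')$ divided by explicitly monic factors $(1-t^k)(1-t^l)$.

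Two steps need more care than you give them. First, the claim that for coprime $p,q\in\zt$ one has $p/q\in\ztt^{\times}$ only if both $p$ and $q$ are bottom-monic is \emph{not} a direct computation: from $p=qu$ with $u\in\Z[[t]]^{\times}$ one only reads off $p(0)=\pm q(0)$, not $q(0)=\pm1$. The statement is true, but you need some arithmetic of $\Z[[t]]$; for instance, Bezout over $\Q[t]$ gives $ap+bq=N$ for some nonzero integer $N$, hence $q$ divides $N$ in $\Z[[t]]$, and since rational primes remain prime in the UFD $\Z[[t]]$ this forces $q=m\cdot(\mbox{unit})$ with $m\mid N$; then $m$ divides all coefficients of both $p$ and $q$, so coprimality gives $m=\pm1$. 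Second, the closed case of step (ii) --- ruling out torsion in $H_2(N;\ztt)$ --- is only promised, not proved. The cleanest fix is the one the paper uses (Lemma~\ref{lem:turaev} together with stable finiteness, which for the commutative domain $\ztt$ is automatic): in the four-term complex of Lemma~\ref{lem:chain-complex-n}, vanishing of $H_1$ forces $B'$ to be surjective, hence invertible, hence the whole complex is acyclic; this bypasses the Euler-characteristic and torsion-freeness discussion altogether. With these two repairs your argument is complete.
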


Amazingly the next theorem gives a converse to Theorem~\ref{thm:fib}.
The  theorem is implicit in the work of Sikorav~\cite[p.~86]{Si87} and also Bieri~\cite[p.~953]{Bi07}. A closely related result in higher dimensions was proved by Ranicki~\cite[p.~622]{Ran95}. In Section~\ref{section:mainthmlow} we will provide a short self-contained proof of the theorem.

\begin{theorem}\label{thm:mainthmlow}
Let $N$ be a 3--manifold and let $\phi \in H^1(N;\Q)=\hom(\pi_1(N),\Q)$ be non-zero. Then $\phi$ is fibered if and only if
\[ H_1(N;\zphipi) =0.\]
\end{theorem}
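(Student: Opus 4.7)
The forward direction is immediate from Theorem~\ref{thm:fib} applied to the identity admissible homomorphism $\gamma = \id\colon \pi \to \pi$. So assume $H_1(N;\zphipi) = 0$ with $\phi \ne 0$. Since both the Novikov ring $\zphipi$ and the notion of fiberedness depend only on the positive ray of $\phi$, I reduce to the case that $\phi \in H^1(N;\Z)$ is primitive.

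My strategy is to verify the criterion in Theorem~\ref{thm:fiberedness-condition}. First I show that any Thurston norm minimizing surface $\Sigma$ dual to $\phi$ is connected. Consider the $\Z[\pi]$-linear ring surjection $\zphipi \twoheadrightarrow \ztt$ sending $g$ to $t^{\phi(g)}$, which is well-defined because the Novikov bounded-below condition forces only finitely many $g$ with $\phi(g)=n$ per level. Its kernel $K$ still admits $1-t$ as a unit, so $H_0(N;K) = K_\pi = 0$, and the long exact sequence associated to $0 \to K \to \zphipi \to \ztt \to 0$ then yields $H_1(N;\ztt) = 0$. Lemma~\ref{lem:novikov-zero} gives that $\tau(N,\phi)$ is monic, in particular nonzero, and Proposition~\ref{prop:surface-connected} forces $\Sigma$ to be connected.

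Set $M := N \sm \nu\Sigma$, connected since $\Sigma$ is nonseparating. Seifert--van Kampen presents $\pi$ as an HNN extension of $\pi_1(M)$ along $\iota_\pm\colon \pi_1(\Sigma)\to \pi_1(M)$, with stable letter $t$ of $\phi$-value $1$, and $\phi|_{\pi_1(M)} = \phi|_{\pi_1(\Sigma)} = 0$. By Theorem~\ref{thm:fiberedness-condition}(3), it suffices to show $\iota_\pm$ are $\pi_1$-isomorphisms, which in this 3-manifold setting reduces (via Stallings' classical fibration theorem) to showing that $\ker\phi \subset \pi$ is finitely generated.

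The finite generation of $\ker\phi$ is the main obstacle. My preferred route is Bieri--Neumann--Strebel theory: Sikorav's characterization of the BNS invariant $\Sigma^1(\pi)$ via Novikov homology places $\phi \in \Sigma^1(\pi)$ directly from the hypothesis, and Poincar\'e--Lefschetz duality with $\zphipi$-coefficients on the 3-manifold $N$ (with appropriate boundary treatment) yields the analogous $H_1$-vanishing with $-\phi$ in place of $\phi$, hence $-\phi \in \Sigma^1(\pi)$ as well. The BNS theorem then gives finite generation of $\ker\phi$, and Stallings' fibration theorem completes the proof. A self-contained alternative closer to Sikorav's original Morse-theoretic approach would represent $\phi$ by a circle-valued Morse function on $N$ and use the Novikov vanishing to cancel its critical points of index $0$ and $1$ via handle moves in the universal cover, producing a fibration over $S^1$ directly.
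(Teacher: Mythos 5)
Your argument is correct in outline and follows the same strategic skeleton as the paper: reduce to a primitive integral class, pass through Bieri--Neumann--Strebel theory, and invoke Stallings (together with Perelman, which you should mention explicitly) to convert the group-theoretic conclusion into an actual fibration. The essential difference is where the work is done. The entire mathematical content of the paper's proof is a short self-contained derivation of the implication $H_1(\pi;\zphipi)=0\Rightarrow \phi\in\Sigma(\pi)$ (Theorem~\ref{mainthm-group-theory}): one writes $(\pi,\phi)$ as an HNN extension $\langle B,t\mid tat^{-1}=\gamma(a)\rangle$ with $A,B$ finitely generated and shows, via the Mayer--Vietoris sequence, that if $A\to B$ fails to be surjective then an explicit power series $\sum_i f_it^i$ produces a nonzero class in $H_1(\pi;\zphipi)$. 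You instead cite this implication as ``Sikorav's characterization of $\Sigma^1$ via Novikov homology.'' That citation is legitimate (the paper itself attributes the result to Sikorav and Bieri), but it black-boxes exactly the step whose proof is the point of this section; on the other hand, your use of Poincar\'e duality (Lemma~\ref{lem:leftright}) to get the vanishing for both $\pm\phi$ neatly sidesteps the conflicting sign conventions in the literature, at the cost of invoking the full BNS finite-generation theorem rather than only Theorem~\ref{thm:3-manifold-bns}. Two smaller points: your entire second paragraph (the surjection $\zphipi\twoheadrightarrow\ztt$, monicness of $\tau(N,\phi)$, connectedness of the norm-minimizing surface, and the Seifert--van Kampen HNN splitting) is never used by your final argument and can be deleted, since finite generation of $\ker\phi$ plus Stallings--Perelman already gives fiberedness without producing a surface first; and the closing Morse-theoretic ``self-contained alternative'' is a slogan rather than an argument and should not be presented as a proof.
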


As an aside we will show  in Section \ref{section:thurston}   how Theorem~\ref{thm:mainthmlow}
can be used to give an alternative proof of the following well--known result of Thurston's \cite{Th86}.

\begin{theorem} \label{thm:thurston}
Let $N$ be a 3-manifold and let $\phi\in H^1(N;\Q)$  be a fibered class. Then there exists an open neighborhood $U\subset H^1(N;\Q)$ of $\phi$, such that any $\psi\in U$ is also fibered.
\end{theorem}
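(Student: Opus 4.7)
The plan is to apply Theorem~\ref{thm:mainthmlow} to convert Theorem~\ref{thm:thurston} into an openness statement for the vanishing locus of Novikov--Sikorav homology, and then to verify that openness homologically. Writing $\pi := \pi_1(N)$, Theorem~\ref{thm:mainthmlow} tells us that a nonzero class $\psi \in H^1(N;\Q)$ is fibered if and only if $H_1(N;\Z_{\psi}[\pi\doubler)=0$. Hence it suffices to show that the set
\[ \mathcal{F} := \bigl\{\psi \in H^1(N;\Q)\setminus\{0\} : H_1(N;\Z_{\psi}[\pi\doubler)=0\bigr\} \]
is open in $H^1(N;\Q)$; taking $U$ to be any neighborhood of $\phi$ contained in $\mathcal{F}$ then yields the theorem.

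To verify openness of $\mathcal{F}$, I would fix a finite CW structure on $N$, producing a finite chain complex $C_* := C_*(\widetilde N)$ of finitely generated free $\Z[\pi]$-modules whose boundary operators are represented by matrices with entries in $\Z[\pi]$. For any $\psi$, the module $H_1(N;\Z_\psi[\pi\doubler)$ is the first homology of $C_* \otimes_{\Z[\pi]} \Z_\psi[\pi\doubler$, and its vanishing is equivalent to the existence of $\Z_\psi[\pi\doubler$-linear maps $h_0 \colon C_0 \to C_1$ and $h_1 \colon C_1 \to C_2$ satisfying the partial chain-contraction identity
\[ \partial_2\, h_1 + h_0\, \partial_1 \;=\; \id_{C_1}. \]

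The heart of the argument, and the main obstacle, is to show that for fibered $\phi$ one may choose $(h_0, h_1)$ in such a way that the identity persists over $\Z_\psi[\pi\doubler$ for $\psi$ in some open neighborhood of $\phi$. The approach is to solve the linear system constructively over $\Z_\phi[\pi\doubler$ by iterated inversion of matrices of the form $I + V$, where $V$ is a matrix with entries in $\Z[\pi]$ whose support consists of finitely many group elements $g$ with $\phi(g) > 0$; such matrices are invertible via the geometric series $(I+V)^{-1} = \sum_{n \ge 0} (-V)^n$, and the entries of the resulting inverse are supported in the submonoid of $\pi$ generated by a fixed finite set of positive-$\phi$ elements $g_1, \ldots, g_k$. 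Granted a contraction of this form, the open neighborhood
\[ U := \bigl\{\psi \in H^1(N;\Q) : \psi(g_i) > 0 \text{ for each } i = 1, \ldots, k\bigr\} \]
does the job: for any $\psi \in U$, the matrix entries of $h_0, h_1$ still lie in $\Z_\psi[\pi\doubler$, and the same identity yields $H_1(N;\Z_\psi[\pi\doubler)=0$, so $\psi \in \mathcal{F}$. Producing a contraction of this controlled form requires a careful Gaussian-elimination-style procedure over $\Z_\phi[\pi\doubler$, in the spirit of the openness arguments underlying the Bieri--Neumann--Strebel--Renz invariants; this is the step where all the technical work lies.
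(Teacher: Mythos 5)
Your reduction is exactly the paper's: by Theorem~\ref{thm:mainthmlow} it suffices to find a neighborhood of $\phi$ on which $H_1(N;\Z_{\psi}[\pi\doubler)=0$, and the mechanism you propose for propagating the vanishing --- a matrix $\id+V$ with $V$ defined over $\Z[\pi]$ and supported on finitely many group elements of positive $\phi$-value stays invertible over $\Z_{\psi}[\pi\doubler$ via the geometric series, so the finitely many conditions $\psi(g_i)>0$ cut out the desired open set --- is precisely the paper's Lemma~\ref{lem:stillinv}.

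There is, however, a genuine gap where you write that producing a contraction of the controlled form ``requires a careful Gaussian-elimination-style procedure \dots this is the step where all the technical work lies'': that step is the entire content of the proof and you have not carried it out. Starting from an arbitrary CW structure it is not automatic that an acyclic complex over $\Z_{\phi}[\pi\doubler$ admits a contraction whose entries are supported in the submonoid generated by finitely many positive elements (a general element of $\Z_{\phi}[\pi\doubler$ is not of this form), and establishing this in general is essentially the openness theorem for BNSR-type invariants. The paper sidesteps the elimination entirely by using the fibered hypothesis at exactly this point: it realizes $N$ up to homotopy as the mapping torus $N'$ of a cellular approximation $\psi'$ of the monodromy, for which the boundary matrices of $C_*(\wti{N}')$ are explicitly $A_1=(-1+t,*,\dots,*)$, $A_3=(*,\dots,*,-1+t)^t$, and $A_2$ containing the block $B_2=-\id+t\big(\partial\psi'(e_i)/\partial e_j\big)$ with the $e_i$ lying in $\ker\phi$, so every group element occurring in $t\big(\partial\psi'(e_i)/\partial e_j\big)$ has $\phi$-value $1>0$. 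The controlled invertibility you need is thus visible on the nose; Lemma~\ref{lem:stillinv} applies directly to $B_2$, the condition $\psi(t)\ne 0$ handles $-1+t$, and Lemma~\ref{lem:turaev} gives $H_*(N';\Z_{\psi}[\pi\doubler)=0$ on the intersection of the two neighborhoods. To repair your argument, either carry out the elimination (nontrivial) or, much more simply, replace the arbitrary CW structure by the mapping-torus one. A minor further point: vanishing of $H_1$ of a free complex is implied by, but not in general equivalent to, the existence of $h_0,h_1$ with $\partial_2 h_1+h_0\partial_1=\id_{C_1}$; this does not hurt you, since at $\phi$ the whole complex is acyclic (Lemma~\ref{lem:h0123}) and hence contractible, and for nearby $\psi$ you only use the easy implication.
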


The argument we give is close in spirit to other proofs in the literature, e.g.\ provided by Neumann~\cite{Nem79} and  Bieri--Neumann--Strebel  \cite[Theorems~A~and~E]{BNS87}.

Given a group $G$ we denote the commutator subgroup of  by $G^{(1)}$.
The following theorem is the main new result of the paper. 
For $\Gamma=\ll t\rr$ this theorem is a reformulation of 
Theorem~\ref{thm:alexander-fibered}.

\begin{theorem} \label{thm:iso-intro}
Let $N$ be a 3--manifold, let $\phi \in H^1(N;\Z)=\hom(\pi_1(N),\Z)$ be primitive and let $\gamma\colon \pi_1(N)\to \G$ be an admissible homomorphism. Assume that  $H_1(N;\Z_{\phi}[\G\doubler)=0$.
Then the following hold.
\bn
\item Every Thurston norm minimizing surface  dual to $\phi$ is connected.
\item For any  Thurston norm minimizing surface $\S$ dual to $\phi$
the inclusion induced maps
\[ \frac{\pi_1(\S)}{\ker(\gamma\colon \pi_1(\S)\to \G)}\xrightarrow{\,\,\i_\pm\,\,}
\frac{\pi_1(N\sm \nu \S)}{\ker(\gamma\colon \pi_1(N\sm \nu \S)\to \G)}  \]
are isomorphisms.
\item If $\G$ is a torsion-free elementary-amenable group, and if furthermore we have $\deg_\phi(\tau(N,\gamma))=\|\phi\|_T$,
then for any  Thurston norm minimizing surface $\S$ dual to $\phi$
the inclusion induced maps
\[ \frac{\pi_1(\S)}{\ker(\gamma\colon \pi_1(\S)\to \G)^{(1)}}\xrightarrow{\,\,\i_\pm\,\,}
\frac{\pi_1(N\sm \nu \S)}{\ker(\gamma\colon \pi_1(N\sm \nu \S)\to \G)^{(1)}}  \]
are isomorphisms.
\en
\end{theorem}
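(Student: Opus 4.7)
The plan is to exploit the Mayer--Vietoris long exact sequence arising from cutting $N$ along a Thurston-norm-minimizing surface $\S$ dual to $\phi$. Set $M := N \sm \nu\S$ and $R := \Z_\phi[\G\doubler$, and pick $t \in \G$ with $\phi(t) > 0$ (possible since $\phi$ is primitive and nontrivial); the element $1-t$ is then a unit of $R$ with inverse $\sum_{i \ge 0} t^i \in R$. The gluing description of $N$ from $M$ along the two copies of $\S$ in $\partial M$ yields a long exact sequence
\[ \cdots \to H_n(\S; R) \xrightarrow{(\i_+)_* - t(\i_-)_*} H_n(M; R) \to H_n(N; R) \to H_{n-1}(\S; R) \to \cdots, \]
with coefficient systems pulled back from $N$. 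Since $1-t$ is a unit in $R$ and lies in the right ideal $I_\G R$ (with $I_\G$ the augmentation ideal of $\Z[\G]$), one has $H_0(N; R) = R/I_\G R = 0$; combined with the hypothesis $H_1(N; R) = 0$ this is the main input.

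For part (1), apply the ring homomorphism $R \to \ztt$ induced by $\phi\colon \G \to \Z$, which is well-defined because for any $C$ only finitely many $g$ in the support of a Novikov element satisfy $\phi(g) < C$. A standard universal-coefficient spectral-sequence argument, using $H_0(N;R) = H_1(N;R) = 0$, yields $H_1(N; \ztt) = 0$. Lemma~\ref{lem:novikov-zero} then implies $\tau(N,\phi)$ is monic, hence nonzero, and Proposition~\ref{prop:surface-connected} delivers the connectedness assertion~(1).

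For part (2), with $\S$ connected, compute $H_0(\S; R) = R/I_{\G_\S}R$ and $H_0(M_0; R) = R/I_{\G_M}R$ for any connected component $M_0$ of $M$, where $\G_\S := \gamma(\pi_1(\S))$ and $\G_M := \gamma(\pi_1(M_0))$ are subgroups of $\ker\phi \subseteq \G$. On $H_0$ the Mayer--Vietoris differential equals, up to the unit $1-t$, the canonical projection $R/I_{\G_\S}R \twoheadrightarrow R/I_{\G_M}R$; the long exact sequence forces this differential to be an isomorphism, and a careful analysis using the $\phi$-grading on $R$ then yields $\G_\S = \G_M$. Since both $\i_\pm$ induce the same abstract inclusion of quotient groups (up to $t$-conjugation, which is the identity on $\G_M = \G_\S$), this is precisely assertion~(2), and connectedness of $M$ drops out as well.

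For part (3), with $\G$ tfea and $\deg_\phi(\tau(N,\gamma)) = \|\phi\|_T$, the Cochran--Harvey derivation of the bound $\deg_\phi(\tau(N,\gamma)) \le \|\phi\|_T$ via a Mayer--Vietoris computation of the torsion over the Ore field $\K(\G)$ (using the surface $\S$) forces all dimension-counting inequalities in that derivation to become equalities. Combined with the Novikov surjection $H_1(\S; R) \twoheadrightarrow H_1(M; R)$ from the long exact sequence, this sharpness implies that the induced map of higher-order Alexander modules $H_1(\S; \Z[\G]) \to H_1(M; \Z[\G])$ is an isomorphism. Since $(\ker\gamma|_{\pi_1(\S)})^{\mathrm{ab}} = H_1(\S; \Z[\G_\S])$ is the relevant Alexander module (and likewise for $M$), this iso together with $\G_\S = \G_M$ from~(2) is equivalent to the claimed isomorphism on $\pi_1/\ker(\gamma)^{(1)}$. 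The main obstacle here is precisely the Novikov-to-$\Z[\G]$ descent: one must use the tfea structure and Ore localization over $\K(\G)$ to ensure the degree-matching translates into a bona fide Alexander-module isomorphism, ruling out hidden Tor contributions that the Novikov completion could fail to detect.
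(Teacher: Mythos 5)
Your overall architecture coincides with the paper's: cut $N$ along $\S$, set $M=N\sm\nu\S$, run the Mayer--Vietoris sequence over $R=\Z_\phi[\G\doubler$, deduce (1) by pushing forward to $\ztt$ and invoking Lemma~\ref{lem:novikov-zero} and Proposition~\ref{prop:surface-connected}, treat (2) on $H_0$ and (3) on $H_1$. Parts (1) and (2) are sound in outline; for (2) the step you defer to ``a careful analysis using the $\phi$-grading'' is exactly the content of the paper's proof of Theorem~\ref{mainthm-group-theory} (taking level-zero parts of a relation $b-1=\sum(a_i-1)r_i$, or equivalently the iterative construction of a power series in the kernel), and it does work. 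One omission there: the statement requires \emph{both} $\i_+$ and $\i_-$ to induce isomorphisms, and a single Mayer--Vietoris differential $\i_--t\i_+$ only controls one of them directly; the paper gets the other by passing to $\Z_{-\phi}[\G\doubler$ via Lemma~\ref{lem:leftright}, and you will need the analogous step.

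The genuine gap is in part (3). You name the obstacle (``Novikov-to-$\Z[\G]$ descent'') but do not overcome it: asserting that sharpness of the Cochran--Harvey inequality ``forces all dimension-counting inequalities to become equalities'' and that this ``implies'' the Alexander-module map is an isomorphism is the conclusion, not an argument. Moreover the surjection you extract from the long exact sequence is surjectivity of the \emph{combined} map $\i_--t\i_+$ over $R$, from which nothing about the two individual maps $\i_\pm$ over $\Z[\G]$ follows formally. What is actually required, and what the paper supplies, is: (i) an explicit chain model in which $\tau(N,\gamma)=\det\bp R^-+\gamma(t)R^+\\ R'\ep\cdot(1-\gamma(t))^{-1}$, so the hypothesis becomes $\deg_\phi\det\bp R^-+tR^+\\ R'\ep=2g$; (ii) the skew-field lemma (Lemma~\ref{lem:dim}) showing this degree equality forces \emph{both} $\bp R^-\\ R'\ep$ and $\bp R^+\\ R'\ep$ to be invertible over $\K(\G_\phi)$ --- this is the only place where the two maps $\i_\pm$ get separated from their combination; and (iii) the leading-term argument (Lemma~\ref{lem:inv}), applied over both $\Z_\phi[\G\doubler$ and $\Z_{-\phi}[\G\doubler$, upgrading invertibility over $\K(\G_\phi)$ together with Novikov invertibility of $R^-+\gamma(t)R^+$ to invertibility of $\bp R^\pm\\ R'\ep$ over $\Z[\G_\phi]$ itself, which is what identifies $\i_\pm$ on $H_1(\,\cdot\,;\Z[\G])$ as isomorphisms. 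None of (i)--(iii) appears in your proposal, and without them the passage from ``degree equality plus Novikov vanishing'' to the isomorphism of Alexander modules for each of $\i_+$ and $\i_-$ separately does not go through. The final reassembly via the two short exact sequences and part (2) is correct in outline.
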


We get the strong conclusion of Statement (3) only by combining the information of the degree of the noncommutative torsion with the information on Novikov-Sikorav homology. Thus it seems to us that the vanishing of Novikov-Sikorav homology is the right generalization of an Alexander polynomial being monic.\\

As a final remark, in a future paper~\cite{FL16} the author and Wolfgang L\"uck  will also show that noncommutative Reidemeister torsions  always detect the Thurston norm of any irreducible 3-manifold that is not a closed graph manifold.

\subsection*{Conventions.} All groups are assumed to be finitely generated.
Given a ring $R$ we say that a matrix over $R$ is invertible if it has a right and a left inverse. We view elements in $R^n$ as column vectors.  All 3-manifolds are understood to be connected, orientable, compact with empty or toroidal boundary. In Section~\ref{section:connected} we will provide a proof of Proposition~\ref{prop:surface-connected}.
In Section~\ref{section:torsion} we will recall the definition of Reidemeister torsion over a skew-field and of the degree functions we are interested in. Finally in Section~\ref{section:opentwo} we will prove our main result, namely Theorem~\ref{thm:iso-intro}.

\subsection*{Organization.}
The paper is organized as follows. In Section~\ref{section:basics}
we recall the definition of Novikov--Sikorav homology and and we recall some of the basic properties. In Section~\ref{section:known}
we will outline proofs of statements about 3-manifolds, fiberedness and Novikov-Sikorav homology which explicitly or implicitly have already appeared in the literature. In particular we will provide proofs of 
Theorem~\ref{thm:mainthmlow} and Theorem~\ref{thm:thurston}.

\subsection*{Acknowledgments.} I would like to thank Andrew Ranicki for several very helpful conversations. I also would like to thank Jean--Claude Sikorav for providing me with a copy of his thesis and for sending me a draft version of his survey paper~\cite{Si15}. Finally I am very grateful to the referee for reading the paper thoroughly and for pointing out several mistakes in an earlier version of this paper. I am especially grateful that the referee pointed out that the original one-line proof of Proposition~\ref{prop:surface-connected} might be considered a little terse.  

This paper has had a very long genesis. It started out when I was a CRM-ISM postdoc at the Universit\'e du Qu\'ebec \`a Montr\'eal. I am grateful for the financial supported I received at that point. The recent research  was supported by the SFB 1085 `Higher Invariants' at the University of Regensburg, funded by the Deutsche Forschungsgemeinschaft (DFG).

\section{The definition of Novikov--Sikorav homology and basic properties}\label{section:basics}
In this section we will introduce Novikov--Sikorav homology and we will prove several basic facts about Novikov--Sikorav homology that we will use later on.

\subsection{The definition of Novikov--Sikorav homology}
Given a connected CW-complex $X$ and a subcomplex $Y\subset X$ we denote by $p\colon \wti{X}\to X$ its universal covering and we write $\wti{Y}=p^{-1}(Y)$.
The fundamental group $\pi=\pi_1(X)$ acts naturally on the left of $C_*(\wti{X},\wti{Y})$, thus we can view $C_*(\wti{X},\wti{Y})$ as a chain complex of left $\Z[\pi]$-modules.  Given group homomorphisms $\gamma\colon \pi\to \G$ and $\phi\colon \Gamma\to \Q$ we can view $\zphig$ as a $\Z[\pi]$-left module using left multiplication. We define 
\[ C^*(X,Y;\zphig):=\hom_{\Z[\pi]}(C_*(\wti{X},\wti{Y}),\zphig)\]
and we denote the corresponding cohomology modules by $H^*(X,Y;\zphig)$.
We can use the canonical involution on the group ring $\Z[\pi]$ to turn   $C_*(\wti{X},\wti{Y})$ into a chain complex of right $\Z[\pi]$-modules. We then define
\[ C_*(X,Y;\zphig):=C_*(\wti{X},\wti{Y})\otimes_{\Z[\pi]}\zphig\]
and we denote the corresponding homology modules by $H_*(X,Y;\zphig)$.
These modules are often referred to as Novikov--Sikorav homology of $(X,Y)$. As usual we will drop $Y$ from the notation if $Y=\emptyset$.

\subsection{Stably finite rings}
In the following we say that a ring $R$ is \emph{stably finite}
if any epimorphism $R^n\to R^n$ of right $R$-modules is in fact an isomorphism. We refer to \cite[p.~5]{La99} for more information and background on stably finiteness.
The following proposition says that group rings and their Novikov--Sikorav completions are stably finite.

\begin{proposition}\label{prop:stably-finite}
For any group $\G$ the group ring $\Z[\G]$ is stably finite. Furthermore, for any $\phi\in \hom(\G,\Q)$ the Novikov--Sikorav completion $\zphig$ is stably finite. 
\end{proposition}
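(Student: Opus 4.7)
I would tackle the two assertions in turn.

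For stable finiteness of $\Z[\G]$, my plan is to invoke the classical embedding $\Z[\G] \hookrightarrow \mathcal{N}(\G)$ of the group ring into the group von Neumann algebra via the left regular representation. The algebra $\mathcal{N}(\G)$ carries a faithful normal finite trace and is thus a finite von Neumann algebra, which is well known to be stably finite: given $AB=I_n$, the element $BA = B(AB)A$ is an idempotent with trace equal to that of $I_n$, and the standard comparison of $BA$ with its associated range projection forces $BA = I_n$. Stable finiteness is inherited by subrings, so it passes to $\Z[\G]$.

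For the second assertion, stable finiteness of $\zphig$, I would first make a structural reduction. Since $\G$ is finitely generated, $\phi(\G) \subset \Q$ is cyclic; the trivial case $\phi = 0$ gives $\zphig = \Z[\G]$ and is handled by the first part. Otherwise, after rescaling $\phi$ (which does not change $\zphig$ up to relabeling), I assume $\phi(\G) = \Z$, pick $t \in \G$ with $\phi(t) = 1$, and set $K = \ker \phi$. The natural valuation $v(f) := \min\{\phi(g) : f(g)\neq 0\}$ then identifies $\zphig$ with the twisted Laurent series ring $\Z[K]((t;\sigma))$, where $\sigma$ denotes the automorphism of $\Z[K]$ induced by conjugation by $t$.

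My strategy then has two stages. In the first stage I would show that the positive subring $R_+ := \{f \in \zphig : v(f) \ge 0\} \cong \Z[K][[t;\sigma]]$ is stably finite by applying the following completion lemma: if a ring $R$ is complete and separated with respect to a two-sided ideal $J$ and $R/J$ is stably finite, then $R$ is stably finite. Indeed, given $AB = I_n$ in $M_n(R)$, reducing modulo $J$ and applying stable finiteness of $R/J$ gives $\bar{B}\bar{A} = I_n$, so $E := BA - I_n \in M_n(J)$; completeness places $J$ in the Jacobson radical of $R$, so $I_n + E$ is a unit, and the idempotence $(BA)^2 = BA$ rewrites as $E(I_n + E) = 0$, forcing $E = 0$. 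Applying this to $R = R_+$ with $J = tR_+$ (a two-sided ideal because $t$ is a normal element), and using that $R_+/tR_+ \cong \Z[K]$ is stably finite by the first part, gives stable finiteness of $R_+$.

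The main obstacle will be the second stage: upgrading stable finiteness from $R_+$ to its localization $\zphig = R_+[t^{-1}]$. Naive rescaling by a single power of $t$ on each side fails in general, because the global valuations $v(A)$ and $v(B)$ of matrices satisfying $AB = I_n$ in $M_n(\zphig)$ need not sum to zero (for example, $A = \op{diag}(t^{-1}, 1)$ and $B = A^{-1}$). To circumvent this, I would attempt a diagonal scaling: multiply $A$ on the right by a diagonal unit matrix $D = \op{diag}(t^{\alpha_j})$ and $B$ on the left by $D^{-1}$, choosing the exponents $\alpha_j$ so that all column valuations of $AD$ are pushed up to zero, and then use the compatibility constraints imposed by $AB = I_n$ to verify that the row valuations of $D^{-1}B$ are also non-negative, reducing everything to a relation inside $M_n(R_+)$. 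Alternatively, one could appeal to a general theorem that localization at a normal non-zero-divisor preserves stable finiteness of a complete filtered ring of this type. This bookkeeping step, rather than the group-ring case or the completion lemma, is the technical heart of the proof.
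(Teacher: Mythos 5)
First, for calibration: the paper offers no proof of this proposition at all --- it cites Kaplansky for the group-ring statement and \cite[Theorem~1]{Ko06} for the Novikov--Sikorav statement --- so you are supplying an argument where the paper supplies references. Your first part is correct and is essentially Kaplansky's own argument (embed $\C[\G]$ in the finite von Neumann algebra $\mathcal{N}(\G)$, use the faithful trace, restrict to the subring $\Z[\G]$). Your stage one of the second part is also correct: $R_+=\Z[K][[t;\sigma]]$ is complete and separated for the two-sided ideal $J=tR_+$, the quotient is $\Z[K]$, and the completion lemma (with $E=BA-I_n\in M_n(J)\subset\op{rad}$ and $E(I_n+E)=0$) is sound.

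The gap is in stage two, exactly where you place it, and it is a genuine missing idea rather than bookkeeping. The assertion that $AB=I_n$ forces the row valuations of $D^{-1}B$ to be non-negative once the column valuations of $AD$ are normalized to zero is false for Laurent series over a general stably finite coefficient ring: over $S=\Z\times\Z$ with orthogonal idempotents $e_1,e_2$, the element $a=e_1+e_2t\in S((t))$ has valuation $0$ and two-sided inverse $b=e_1+e_2t^{-1}$ of valuation $-1$, so already for $n=1$ the rescaled pair does not land in $S[[t]]$. The mechanism is that $v(xy)>v(x)+v(y)$ whenever leading coefficients multiply to zero, and $\Z[K]$ has zero divisors as soon as $K$ has torsion; nothing in the relation $AB=I_n$ excludes this, so ruling it out would require genuinely special properties of group rings that you have not invoked. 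Worse, it is not even clear that the finitely generated projective $\ker(A)$ over $\zphig$ descends to $R_+$ at all, so the whole strategy of reducing to $M_n(R_+)$ may be unsalvageable without new input. The fallback you mention --- a general theorem that inverting a normal non-zero-divisor preserves stable finiteness of such a filtered ring --- does not exist in the generality you need; if it did, the second assertion of the proposition would be an exercise, whereas it is precisely the content of Kochloukova's theorem, which requires a different argument. So parts one and stage one stand, but the second assertion of the proposition is not yet proved.
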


The first statement is proved by Kaplansky~\cite[p.~122]{Ka69} \cite[Section~4.3]{Roe03} (note that Kaplansky uses an alternative formulation of stable finiteness given e.g.\ in \cite[p.~5]{La99}). The statement for Novikov--Sikorav completions is precisely~\cite[Theorem~1]{Ko06}. We also refer to \cite{Si15} for an alternative proof.

\begin{remark}
If $R$ is stably finite, then (see~\cite[p.~5]{La99}) any matrix that has a left (respectively right) inverse also has a right (respectively left) inverse. In particular for group rings and Novikov--Sikorav completions we do not have to worry about the different notions of invertibility of matrices. 
\end{remark}

We can now formulate the following lemma.

\begin{lemma} \label{lem:turaev}
Let $R$ be a ring and let
\[ \mathcal{C}\,\,=\,\,0\to R\xrightarrow{C} R^n \xrightarrow{B} R^n \xrightarrow{A} R\to 0\]
be a complex of $R$ right-modules. We write
\[ \ba{rcl}
A&=&\bp a_1&a_2&\dots&a_n\ep,\\
B&=& \bp *&*\\
*&B'\ep, \\
C&=&\bp c_1&c_2&\dots&c_n\ep^t,
\ea \]
where $B'$ is an $(n-1)\times (n-1)$--matrix over $R$.
Assume that $c_1$ and $a_1$ are units in $R$. Then the following are equivalent:
\bn
\item $B'$ is invertible over $R$,
\item  the complex is acyclic,
\en
and if $R$ is stably finite, then the above are equivalent to
\bn
\item[(3)] $H_1(\mathcal{C}_*)=0$.
\en
\end{lemma}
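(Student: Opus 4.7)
The plan is to simplify the complex by a change of basis in the two middle copies of $R^n$ so that it decomposes as the direct sum of a trivially acyclic complex and a short complex involving only $B'$. Using that $a_1$ is a unit, I would choose an invertible matrix $P$ over $R$ with lower-right $(n-1)\times(n-1)$ block equal to $I_{n-1}$ and $AP=(1,0,\dots,0)$; then $P^{-1}$ has first row $(a_1,\dots,a_n)=A$ and identity in its lower-right block, so left multiplication by $P^{-1}$ replaces the first row of $B$ by $AB=0$ and leaves the other rows untouched. Symmetrically, using that $c_1$ is a unit, I would choose an invertible $Q$ with lower-right $(n-1)\times(n-1)$ block equal to $I_{n-1}$, first column $(c_1,\dots,c_n)^t$, and $Q^{-1}C=(1,0,\dots,0)^t$; right multiplication by $Q$ then replaces the first column of $B$ by $BC=0$ and leaves the others untouched.

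Performing both base changes simultaneously gives
\[ P^{-1}BQ=\begin{pmatrix} 0 & 0 \\ 0 & B' \end{pmatrix}, \]
where the lower-right block is precisely the original $B'$ from the statement. In the new bases the complex decomposes as the direct sum of
\[ 0\to R\xrightarrow{1}R\xrightarrow{0}R\xrightarrow{1}R\to 0, \]
which is trivially acyclic, and the two-term complex $0\to R^{n-1}\xrightarrow{B'}R^{n-1}\to 0$. Hence the full complex is acyclic if and only if $B'$ is both injective and surjective, which is equivalent to invertibility of the matrix $B'$. Moreover, in this decomposition $H_1(\mathcal{C}_*)$ is naturally isomorphic to $\op{coker}(B')$.

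The equivalence (1)$\Leftrightarrow$(2) is now immediate, and since acyclicity implies $H_1=0$ we also get (2)$\Rightarrow$(3) for free. For (3)$\Rightarrow$(1) under the stably finite hypothesis, the vanishing of $H_1(\mathcal{C}_*)\cong\op{coker}(B')$ means $B'\colon R^{n-1}\to R^{n-1}$ is a surjective endomorphism, and the defining property of stable finiteness upgrades it to an isomorphism, hence to an invertible matrix. The main technical point is the one highlighted in the first paragraph: one has to verify that the change-of-basis matrices $P$ and $Q$ can be chosen so that the lower-right $(n-1)\times(n-1)$ block of $B$ is literally the block $B'$ from the hypothesis rather than some conjugate of it; with the explicit choices above this is straightforward, but without this care the reduction would produce a matrix only related to $B'$ by conjugation and the statement would become noticeably weaker.
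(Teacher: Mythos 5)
Your proof is correct and follows essentially the same route as the paper: both use the units $a_1$ and $c_1$ to perform a change of basis by elementary-type matrices $P$, $Q$ that isolates the block $B'$, and then invoke stable finiteness to pass from surjectivity of $B'$ to invertibility. The only difference is cosmetic: you normalize so that the outer maps become $e_1^T$ and $e_1$ and make the direct-sum splitting of the complex explicit, whereas the paper normalizes to $(a_1,0,\dots,0)$ and $(c_1,0,\dots,0)^t$ and cites Turaev's Theorem~2.2 for the resulting linear-algebra step.
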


\begin{proof}
We will make use of the following  two invertible matrices
\[
P=\bp 1&0&0&\dots&0 \\ c_2c_1^{-1}&1&0&\dots&0\\ c_3c_1^{-1}&0&1&\dots&0\\ \vdots&\vdots&&\ddots&\vdots\\ c_nc_1^{-1}&0&0&\dots&1\ep
\mbox{ and }
Q=\bp 1&-a_1^{-1}a_2&-a_1^{-1}a_3&\dots&-a_1^{-1}a_n\\ 0&1&0&\dots&0 \\ 0&0&1&\dots&0\\ \vdots&\vdots&&\ddots&\vdots\\ 0&0&0&\dots&1\ep
\]
Consider the following commutative diagram of complexes
\[  \xymatrix@C1.3cm@R0.8cm{
0\ar[r]& R \ar[r]^{C}& R^n \ar[r]^{B}& R^n \ar[r]^{A}& R\ar[r]& 0\\
0\ar[r]& R\ar[r]_-{P^{-1}C}\ar[u]^{\id}& R^n \ar[r]_{Q^{-1}BP}\ar[u]^P& R^n \ar[r]_{AQ}\ar[u]^Q& R\ar[r]\ar[u]^{\id}& 0.} \]
Note that
\[ \ba{rcl} P^{-1}C&=&\bp c_1&0&\dots&0\ep^t,\\
Q^{-1}BP&=& \bp *&*\\
*&B'\ep, \\
AQ&=&\bp a_1&0&\dots&0\ep.\ea \]
It follows from an elementary linear algebra argument, see e.g.\ \cite[Theorem~2.2]{Tu01}, that the bottom sequence is acyclic if and only if the map represented by $B'$ is a bijection, i.e. if $B'$ is invertible.

Now suppose that $R$ is stably finite. Then $B'$ is invertible if and only if the corresponding homomorphism represented by $B'$  is surjective. But it is clear that this is the case if and only if $H_1(\mathcal{C}_*)=0$. 
\end{proof}

\subsection{Invertible matrices over  Novikov--Skorav rings}

We adopt the following notations. Given a homomorphism $\phi\colon \G\to \Q$ we write $\Gamma_\phi=\ker\{\phi\colon \G\to \Q\}$. Furthermore given $C\in \Q$ we write
\[ \Z_\phi^{>C}[\G\doubler =\big\{ \gamma\colon \G\to \Z \, \big|\,
\g\in \Z_\phi[\G\doubler\mbox{ and } \gamma(g)=0\mbox{ for all }g \mbox{ with }\phi(g)\leq C\big\}.\]
Clearly if $p\in \Z_\phi^{>C}[\G\doubler $ and $q\in \Z_\phi^{>D}[\G\doubler $, then $p\cdot q\in \Z_\phi^{>C+D}[\G\doubler$.

Note that given a non--zero matrix $A$ over $\zphig$ there exists a unique $C$ such that $A=A'g+A''$ where
$\phi(g)=C$, $A'$ is non--zero and defined over $\Z[\G_{\phi}]$, and $A''$ is a matrix over $\Z_\phi^{>C}[\G\doubler $.

The following well--known lemma is the cornerstone of calculations over $\zphig$. Here we say that a square matrix $A$ is \emph{non--degenerate} if the zero matrix is the only matrix $B$ such that $AB=0$.

\begin{lemma} \label{lem:inv}
Let $\G$ be a group and let $\phi\colon \G\to \Q$ be a homomorphism. 
Let $A$ be a non--zero square matrix over $\zphig$. We write $A=A'g+A''$ where
$\phi(g)=C$, $A'$ is a non--zero matrix defined over $\Z[\G_{\phi}]$ and $A''$ is a matrix over $\Z_\phi^{>C}[\G\doubler $.
Assume that $A'$ is non--degenerate. Then
$A$ is invertible over $\zphig$ if and only if $A'$ is invertible over $\Z[\G_{\phi}]$.
\end{lemma}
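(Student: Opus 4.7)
The plan is to treat the two implications separately, exploiting the leading-term structure of matrices over $\zphig$. Since $\G$ is finitely generated, $\phi(\G)$ is a finitely generated subgroup of $\Q$ and hence cyclic; write $\phi(\G)=s\Z$ for some $s>0$ (the case $\phi=0$ is trivial, as then $\zphig = \Z[\G]$ and there is no decomposition to make).

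For the forward direction, assume $A'$ is invertible over $\Z[\G_\phi]$ and factor
\[ A \;=\; A'g + A'' \;=\; A'g\bigl(I + g^{-1}(A')^{-1}A''\bigr).\]
Set $M := g^{-1}(A')^{-1}A''$. The entries of $M$ lie in $\Z_\phi^{>0}[\G\doubler$: the matrix $(A')^{-1}$ is supported on $\G_\phi$ (at $\phi$-value $0$), $A''$ has entries in $\Z_\phi^{>C}[\G\doubler$, and left multiplication by $g^{-1}$ shifts $\phi$-supports down by $C$. Consequently each $M^k$ has entries supported in $\phi^{-1}\bigl([ks,\infty)\bigr)$. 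I would then verify that the geometric series $\sum_{k\geq 0}(-M)^k$ defines a matrix over $\zphig$---for each bound $D$, only $k\leq D/s$ contribute to coefficients at $\phi$-value $\leq D$, and each individual $M^k$ already satisfies the Novikov finiteness condition---and that it is a two-sided inverse of $I+M$. This yields $A^{-1} = \bigl(\sum_{k\geq 0}(-M)^k\bigr)\,g^{-1}(A')^{-1}$.

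For the reverse direction, assume $A$ has a two-sided inverse $B$ over $\zphig$ and decompose $B = B'h + B''$ with $\phi(h)=D$, $B'\neq 0$ over $\Z[\G_\phi]$, and $B''$ a matrix over $\Z_\phi^{>D}[\G\doubler$. Since $\G_\phi$ is normal in $\G$, the matrix $gB'g^{-1}$ again has entries in $\Z[\G_\phi]$, allowing me to rewrite
\[ AB \;=\; A'(gB'g^{-1})\,gh \;+\; \bigl(\text{entries in }\Z_\phi^{>C+D}[\G\doubler\bigr),\]
as the three cross terms $A'gB''$, $A''B'h$ and $A''B''$ all sit in the displayed error term. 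Non-degeneracy of $A'$, together with $gB'g^{-1}\neq 0$ (immediate from $B'\neq 0$ and the bijectivity of conjugation on $\Z[\G]$), forces $A'(gB'g^{-1})\neq 0$, so the displayed block genuinely realises the lowest $\phi$-value appearing in $AB$. Matching with $I$ forces $C+D=0$ and $A'(gB'g^{-1})\cdot gh = I$; the product $gB'g^{-1}\cdot gh$ is a matrix over $\Z[\G_\phi]$ and is thus a right inverse of $A'$. Stable finiteness of $\Z[\G_\phi]$ (Proposition~\ref{prop:stably-finite}) then promotes this to a two-sided inverse.

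The main obstacle I anticipate lies in the reverse direction: I must be sure that the purported leading block $A'(gB'g^{-1})gh$ is not cancelled by higher-order cross terms, and this is precisely where non-degeneracy of $A'$ is indispensable. Without it one could have $A'(gB'g^{-1})=0$ while still $AB=I$, and the leading-term comparison would collapse. The forward direction is comparatively formal once one confirms that the Novikov ring is large enough to accommodate the geometric series, which is ensured by the positive gap $s>0$ coming from finite generation of $\G$.
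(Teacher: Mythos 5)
Your proof is correct and takes essentially the same route as the paper: the forward direction reduces, after factoring out $A'g$, to inverting $\id+M$ with $M$ over $\Z_\phi^{>0}[\G\doubler$ via a geometric series, and the reverse direction compares leading terms in $AB=\id$, using non-degeneracy of $A'$ to ensure the lowest-order block $A'(gB'g^{-1})$ does not vanish. Your additional care about the discreteness of $\phi(\G)$ (for convergence of the series) and the appeal to stable finiteness of $\Z[\G_\phi]$ to upgrade the right inverse to a two-sided one are just explicit versions of points the paper leaves implicit.
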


\begin{remark}
Consider the matrix
\[ A=\bp 1&0\\ 0&t \ep=\underset{=A'}{\underbrace{\bp 1&0 \\ 0&0\ep}} +\underset{=A''}{\underbrace{t\bp 0&0\\ 0&1\ep}}.\]
It is invertible over $\Z[t^{-1},t\doubler$, but $A'$ is not invertible. This shows that in the above lemma it is necessary to assume that $A'$ is non--degenerate.
\end{remark}

\begin{proof}
First assume that $A$ has an inverse $B$ over $\zphig$.
Then there exists a unique $D\in \Q$ such that  $B=B'h+B''$ where
$ \phi(h)=D$, $B'\ne 0$ is defined over $\Z[\G_{\phi}]$,  and $B''$ is a matrix over $\Z_\phi^{>D}[\G\doubler $.
We compute
\[ \id=A\cdot B=(A'g+A'')\cdot (B'h+B'')=A'gB'g^{-1}\cdot gh+R\]
where $R$ is a matrix over $\Z_{\phi}^{>C+D}[\G\doubler$. Note that $gB'g^{-1}$ is a matrix over $\Z[\G_\phi]$.
Since $A'$ is by assumption non--degenerate it follows that $A'gB'g^{-1}=\id$ and $R=0$, i.e.
 $A'$ is invertible over $\Z[\G_{\phi}]$.

On the other hand assume we are given  $A=A'g+A''$ with $A'$ invertible over $\Z[\G_\phi]$ and $A''$ a matrix in $\Z_\phi^{>\phi(g)}[\G\doubler$.
After multiplication by $g^{-1}A'^{-1}$ we can assume that $A'=\id$, $g=e$ and $C=0$. So now we have to show that $\id+A''$ with $A''$ defined over
$\Z_{\phi}^{>0}[\G\doubler$ is invertible. It is easy to see that the fact that  $A''$ lies in $\Z_{\phi}^{>0}[\G\doubler $ implies that the infinite sum
\[ \id+\sum_{k=1}^\infty (-1)^k (A'')^k, \]
defines a matrix over $\zphig$. Clearly this matrix is  an inverse to $\id+A''$.
\end{proof}

For future reference we also record the following well-known special case of Lemma~\ref{lem:inv}.

\begin{lemma} \label{lem:inv-polynomial}
Let $p(t)\in \zt$ be a polynomial. Then $p(t)$ is invertible over $\ztt$ if and only if $p(t)$ is top-monic.
\end{lemma}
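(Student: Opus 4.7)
The plan is to deduce the statement directly from Lemma~\ref{lem:inv}, applied to $\G = \ll t\rr$ and $\phi\colon \G\to \Q$ given by $\phi(t) = 1$. Under these choices $\G_\phi = \ker(\phi)$ is trivial, so $\Z[\G_\phi] = \Z$, and an element of $\Z$ is invertible exactly when it lies in $\{\pm 1\}$. Every nonzero scalar is trivially non-degenerate, so Lemma~\ref{lem:inv} applies as an iff-criterion with no additional hypothesis.

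Given a nonzero Laurent polynomial $p(t) = a_k t^k + a_{k+1} t^{k+1} + \cdots + a_l t^l$ with $a_k, a_l \neq 0$, I view it as a $1\times 1$ matrix over $\ztt$. The decomposition $p = A'g + A''$ required by Lemma~\ref{lem:inv}---with $\phi(g) = C$, $A'$ a nonzero matrix over $\Z[\G_\phi] = \Z$, and $A''$ a matrix over $\Z_\phi^{>C}[\G\doubler$---can be read off immediately. Since $\ztt = \Z_\phi[\ll t\rr\doubler$ consists of formal series bounded below in the $\phi$-direction, the unique admissible $C$ is the extremal value of $\phi$ on the support of $p$ lying at the bounded end, and the corresponding coefficient plays the role of $A'$, with all other monomials of $p$ supported strictly past $C$ and hence belonging to $\Z_\phi^{>C}[\G\doubler$.

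Lemma~\ref{lem:inv} now gives the desired equivalence: $p(t)$ is invertible over $\ztt$ if and only if this extremal scalar is a unit in $\Z$, i.e.\ lies in $\{\pm 1\}$---which is precisely the monicness condition of the statement. I foresee no serious obstacle in this argument; Lemma~\ref{lem:inv-polynomial} is essentially nothing more than the one-variable shadow of Lemma~\ref{lem:inv}. The only minor care needed is to correctly identify, from the direction in which the Novikov completion extends, which extremal coefficient of $p$ is being constrained.
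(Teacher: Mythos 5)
Your approach is exactly the paper's: Lemma~\ref{lem:inv-polynomial} is recorded there with no proof at all, precisely as the $1\times 1$ specialization of Lemma~\ref{lem:inv} with $\G=\ll t\rr$, $\phi(t)=1$, $\G_\phi=\{1\}$, $\Z[\G_\phi]=\Z$, and your observation that a nonzero scalar is automatically non--degenerate, so that the criterion becomes an unconditional equivalence, is the right one. The only substantive issue is the very point you defer to the end and then do not actually carry out: with the paper's convention $\ztt=\Z[t^{-1},t\doubler=\{\sum_{i\geq k}a_it^i\}$, supports are bounded \emph{below}, so in the decomposition $p=A'g+A''$ of Lemma~\ref{lem:inv} one has $g=t^k$ and $A'=a_k$, the \emph{lowest} coefficient of $p(t)=a_kt^k+\dots+a_lt^l$, and the geometric--series inversion produces a series extending upward. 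Thus your argument proves that $p(t)$ is invertible over $\ztt$ if and only if $a_k\in\{\pm1\}$ (``bottom--monic''), while top--monicness $a_l\in\{\pm1\}$ characterizes invertibility over the opposite completion $\zttleft$. Your claim that the extremal scalar condition ``is precisely the monicness condition of the statement'' therefore does not check out against the statement as printed; indeed $2-t$ is top--monic but its inverse $-t^{-1}-2t^{-2}-4t^{-3}-\cdots$ lies only in $\zttleft$, whereas $1-2t$ is not top--monic yet $(1-2t)^{-1}=1+2t+4t^2+\cdots\in\ztt$. The mismatch is really a slip in the lemma's wording rather than in your reasoning (and it is harmless where the lemma is applied, in the proof of Lemma~\ref{lem:novikov-zero}, since there both $\ztt$ and $\zttleft$ are used and only full monicness is extracted), but a complete proof must explicitly name $a_k$ rather than $a_l$ as the constrained coefficient.
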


\subsection{Basic properties of Novikov--Sikorav homology}

The following well-known lemma gives us convenient chain complexes for computing (twisted) homology groups of 3-manifolds.

\begin{lemma}\label{lem:chain-complex-n}
Let $N$ be a 3-manifold and let $\phi\in H^1(N;\Q)$ be non-trivial.
\bn
\item 
If $N$ is closed, then $C_*(\wti{N})$, viewed as a chain complex of $\Z[\pi]$ right modules, is homotopy equivalence to a chain complex of the form 
\[ 0\to \Z[\pi]\xrightarrow{C} \Z[\pi]^n \xrightarrow{B} \Z[\pi]^n \xrightarrow{A} \Z[\pi]\to 0\]
where 
\[ \ba{rcl}
A&=&\bp 1-g&*&\dots&*\ep,\\
B&=& \bp *&*\\
*&B'\ep, \\
C&=&\bp 1-h&*&\dots&*\ep^t,
\ea \]
where $\phi(g)\ne 0$ and $\phi(h)\ne 0$ and where $B'$ is an $(n-1)\times (n-1)$--matrix.
\item 
If $N$ has non-empty boundary, then $C_*(\wti{N})$, viewed as a chain complex of $\Z[\pi]$ right modules, is homotopy equivalence to a chain complex of the form 
\[ 0\to \Z[\pi]^{n-1} \xrightarrow{B} \Z[\pi]^n \xrightarrow{A} \Z[\pi]\to 0\]
where 
\[ \ba{rcl}
A&=&\bp 1-g&*&\dots&*\ep,\\
B&=& \bp *\\
B'\ep,
\ea, \]
where $\phi(g)\ne 0$ and where $B'$ is an $(n-1)\times (n-1)$--matrix.
\en
\end{lemma}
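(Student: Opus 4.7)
The plan is to build the required chain complex directly from a handle decomposition of $N$ with as few top- and bottom-dimensional handles as possible, and then to use the hypothesis that $\phi$ is non-trivial to arrange the first entries of the top boundary maps. For the closed case, any self-indexing Morse function with a unique minimum and maximum (equivalently, a Heegaard splitting) yields a handle decomposition with one $0$-handle, one $3$-handle, and equal numbers $n$ of $1$- and $2$-handles (since $\chi(N)=0$). For the boundary case, a compact $3$-manifold with non-empty toroidal boundary has a $2$-dimensional spine and hence admits a handle decomposition with no $3$-handles; taking a single $0$-handle and using $\chi(N)=0$ again, if there are $n$ $1$-handles there must be $n-1$ $2$-handles. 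In either case the cellular chain complex $C_*(\wti N)$, viewed as a complex of right $\Z[\pi]$-modules, has precisely the displayed ranks, and it remains only to normalise the top boundary matrices.

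The argument for $A$ is identical in both cases. The boundary $\partial_1$ sends the $i$-th $1$-handle to $g_i - 1$, where $g_i\in\pi=\pi_1(N)$ is the loop through that $1$-handle. Since loops in the $1$-skeleton surject onto $\pi_1(N)$, the elements $g_1,\dots,g_n$ generate $\pi$, and because $\phi$ is non-trivial at least one $g_i$ must satisfy $\phi(g_i)\ne 0$. Permuting the ordering of the $1$-handles to move this $g_i$ into first position and then flipping the sign of that basis element gives the first entry of $A$ the form $1-g$ with $\phi(g)\ne 0$, as desired.

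For the form of $C$ in the closed case I would argue geometrically. The attaching $2$-sphere of the $3$-handle meets the belt $0$-sphere of each $2$-handle transversally in two signed points, so in the universal cover the $i$-th coefficient of $\partial_3$ has the form $a_i-b_i=b_i(b_i^{-1}a_i-1)$ for some $a_i,b_i\in\pi$. Rescaling the $i$-th basis element of $C_2$ by $b_i^{-1}$ (this affects only the middle map $B$, not $A$) rewrites this coefficient as $h_i-1$ with $h_i:=b_i^{-1}a_i$. Under the \emph{dual} handle decomposition (obtained from the reverse Morse function, in which $3$-handles become $0$-handles and $2$-handles become $1$-handles), these $h_i$ are precisely the loops through the dual $1$-handles, so they generate $\pi$; the same non-triviality argument as for $A$ then puts the first entry of $C$ into the claimed form $1-h$ with $\phi(h)\ne 0$. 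The main obstacle is exactly this last step: one has to verify that the cellular boundary of the $3$-handle has every coefficient in the shape ``unit times $(h_i-1)$'', since otherwise a single non-triviality condition on $\phi$ would not suffice to normalise $C$; once this geometric handle-intersection fact is in hand, the rest is a short bookkeeping exercise.
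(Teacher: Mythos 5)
Your proof is correct and follows essentially the same route as the paper, which simply picks a CW/handle structure with one $0$-cell and one $3$-cell (resp.\ a $2$-dimensional spine with one $0$-cell) and defers the normalisation details to \cite{McM02} and \cite{FK08b}. Your filling-in of those details --- in particular normalising the $\partial_3$ row via the dual handle decomposition, so that the coefficients are units times $1-h_i$ with the $h_i$ generating $\pi$ --- is exactly the argument the cited references carry out.
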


\begin{proof}
If $N$ is closed, then the lemma follows easily from picking a CW-structure with one 0-cell and one 3-cell. If $N$ has non-empty boundary, then we retract $N$ onto a 2-dimensional CW-complex with one 0-cell.  We refer to \cite[Section~5]{McM02} or  \cite[Proof~of~Lemma~6.2]{FK08b} for details. 
\end{proof}

The following lemma can be viewed as a non--commutative version of
\cite[Theorem~5.5]{Pa06}.

\begin{lemma} \label{lem:h0123}
Let $N$ be a 3--manifold, let $\phi \in H^1(N;\Q)=\hom(\pi_1(N),\Q)$ be non-zero and let $\gamma\colon \pi_1(N)\to \G$ be an admissible  homomorphism. If $H_1(N;\zphig)=0$, then $H_i(N;\zphig)=0$ for all $i$.
\end{lemma}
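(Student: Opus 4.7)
The plan is to reduce the Novikov--Sikorav chain complex of $N$ to a short complex of free $\zphig$-modules and then exploit the stable finiteness of $\zphig$ provided by Proposition~\ref{prop:stably-finite}.

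First I would invoke Lemma~\ref{lem:chain-complex-n}. Tensoring the chain complex produced there with $\zphig$ over $\Z[\pi]$ yields a length-four complex
\[ 0\to \zphig \xrightarrow{C} \zphig^n \xrightarrow{B} \zphig^n \xrightarrow{A} \zphig \to 0\]
when $N$ is closed, and a length-three complex
\[ 0\to \zphig^{n-1} \xrightarrow{B} \zphig^n \xrightarrow{A} \zphig \to 0\]
when $\partial N\ne \varnothing$, with the first entry of $A$ (and of $C$ in the closed case) of the form $1-\gamma(g)$ for some $g\in \pi_1(N)$ with $\phi(g)\ne 0$.

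Next I would verify that any such element $1-\gamma(g)\in \zphig$ is a unit. If $\phi(g)>0$, then Lemma~\ref{lem:inv} applied to the decomposition $1-\gamma(g)=1\cdot e+(-\gamma(g))$ (with $A'=1$ obviously invertible over $\Z[\G_{\phi}]$, and $-\gamma(g)$ lying in $\Z_\phi^{>0}[\G\doubler$) gives invertibility over $\zphig$. If $\phi(g)<0$, write $1-\gamma(g)=\gamma(g)\cdot(1-\gamma(g^{-1}))$ and reduce to the previous case. As a consequence, $A$ is split surjective, so $H_0(N;\zphig)=0$, and in the closed case $C$ is split injective, so $H_3(N;\zphig)=0$.

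It remains to show that $H_1=0$ forces $H_2=0$ (and conversely in the closed case, but that is not needed). In the closed case this is precisely the content of Lemma~\ref{lem:turaev}, since $\zphig$ is stably finite by Proposition~\ref{prop:stably-finite}: the hypothesis $H_1=0$ forces the whole four-term complex to be acyclic. In the bounded case I would mimic the proof of Lemma~\ref{lem:turaev}. One uses an elementary change-of-basis $Q$ to normalize $A$ so that $AQ=(1-\gamma(g),0,\dots,0)$; the complex condition $AB=0$ combined with invertibility of the leading entry then forces the top row of $Q^{-1}B$ to vanish, leaving an $(n-1)\times(n-1)$ square block $B'$. Stable finiteness of $\zphig$ then gives: $H_1=0$ iff $B'$ is surjective iff $B'$ is invertible, and invertibility of $B'$ in turn yields $H_2=0$.

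The only (very mild) obstacle is that Lemma~\ref{lem:turaev} as stated covers only the four-term closed case, so one has to rerun its linear-algebra argument verbatim for the three-term bounded complex; no genuinely new ideas are needed.
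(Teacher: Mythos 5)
Your proposal is correct and follows essentially the same route as the paper: reduce via Lemma~\ref{lem:chain-complex-n}, check that $1-\gamma(g)$ is a unit in $\zphig$ using Lemma~\ref{lem:inv}, and conclude with Lemma~\ref{lem:turaev} together with stable finiteness (Proposition~\ref{prop:stably-finite}), rerunning the length-two analogue in the bounded case. You merely spell out the boundary-case details that the paper leaves to the reader.
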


\begin{proof}
First we consider the case that $N$ is closed. We consider the chain complex of Lemma~\ref{lem:chain-complex-n} (1). We obtain a chain complex that computes $H_*(N;\zphig)$ by tensoring with $\zphig$. In particular the boundary matrices over $\zphig$ are given by applying $\gamma$ to the  matrices. It  follows easily from Lemma~\ref{lem:inv} that $1-\gamma(g)$ and $1-\gamma(h)$ are invertible over $\zphig$. By Proposition~\ref{prop:stably-finite} the ring $\zphig$ is stably finite. The desired statement now follows from Lemma~\ref{lem:turaev}.

The case that $N$ has non-empty boundary is proved essentially the same way. Now one applies Lemma~\ref{lem:chain-complex-n} (2) and an obvious analogue to Lemma~\ref{lem:turaev} for chain complexes of length 2. In the interest of space we leave the details to the reader.
\end{proof}

\begin{lemma} \label{lem:delta-t-nonzero}
Let $N$ be a 3--manifold and
 let 
\[ \phi \in H^1(N;\Z)=\hom(\pi_1(N),\Z)=\hom(\pi_1(N),\ll t\rr)\] be non-zero. Furthermore let $\gamma\colon \pi_1(N)\to \G$ be an admissible  homomorphism. If $H_1(N;\zphig)=0$, then  $H_1(N;\Z[t^{-1},t\doubler)=0$.
\end{lemma}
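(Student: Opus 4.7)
The plan is to pass from Novikov--Sikorav homology with coefficients in $\zphig$ to Novikov homology with coefficients in $\ztt$ via a ring homomorphism, and then exploit the fact that a bounded acyclic complex of free modules is chain contractible.

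First I would build the bridge between the two coefficient systems. Since $\gamma\colon \pi_1(N)\to \G$ is admissible, $\phi$ factors through $\gamma$ as a homomorphism $\bar\phi\colon \G\to \Z=\ll t\rr$. Sending $g\mapsto t^{\bar\phi(g)}$ gives a ring homomorphism $\Z[\G]\to \zt$, and the defining condition on Novikov--Sikorav elements (finitely many non-zero coefficients below any $\phi$-bound) ensures this extends to a ring homomorphism $f\colon \zphig\to \ztt$. Via $f$, the Novikov ring $\ztt$ becomes a left $\zphig$-module, and crucially the $\Z[\pi_1(N)]$-module structure on $\ztt$ used to define $H_1(N;\ztt)$ agrees with the one obtained by first going to $\zphig$ and then applying $f$.

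Next I would use Lemma~\ref{lem:chain-complex-n} to replace $C_*(\wti{N})$ by a bounded chain complex $D_*$ of finitely generated free right $\Z[\pi_1(N)]$-modules. Tensoring with $\zphig$ over $\Z[\pi_1(N)]$ produces a bounded free complex $D_*\otimes_{\Z[\pi]}\zphig$ of $\zphig$-modules whose homology is $H_*(N;\zphig)$. By the hypothesis $H_1(N;\zphig)=0$ and Lemma~\ref{lem:h0123} we have $H_i(N;\zphig)=0$ for every $i$, so this complex is acyclic. A bounded acyclic complex of projective modules over any ring is chain contractible.

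Finally I would apply the functor $(-)\otimes_{\zphig}\ztt$ to this chain contraction. Chain homotopies are preserved under any additive functor, so the complex
\[ \bigl(D_*\otimes_{\Z[\pi]}\zphig\bigr)\otimes_{\zphig}\ztt \;\cong\; D_*\otimes_{\Z[\pi]}\ztt\]
is again chain contractible, hence acyclic, and its homology computes $H_*(N;\ztt)$. In particular $H_1(N;\ztt)=0$. I do not expect a real obstacle here; the only point that needs a line of care is the identification of the two $\Z[\pi_1(N)]$-module structures on $\ztt$, i.e.\ checking that $f$ is compatible with the tautological maps $\Z[\pi_1(N)]\to\zphig$ and $\Z[\pi_1(N)]\to\ztt$, which is immediate from the factorization $\phi=\bar\phi\circ\gamma$.
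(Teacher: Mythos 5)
Your proof is correct, and it reaches the conclusion by a somewhat different mechanism than the paper. Both arguments hinge on the ring homomorphism $\zphig\to\ztt$ induced by the factorization $\phi=\bar\phi\circ\gamma$, and both must first upgrade the hypothesis $H_1(N;\zphig)=0$ to acyclicity of the whole complex (you quote Lemma~\ref{lem:h0123}; the paper in effect reproves this on the spot via stable finiteness and Lemma~\ref{lem:turaev}). The difference is in what gets transported along the ring map. The paper works with the explicit chain complexes of Lemma~\ref{lem:chain-complex-n}: acyclicity over $\zphig$ is converted, via Lemma~\ref{lem:turaev}, into invertibility of the matrix $\gamma(B')$, invertibility is manifestly preserved by the ring homomorphism, and Lemma~\ref{lem:turaev} is then applied a second time over $\ztt$ (whose stable finiteness is again needed). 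You instead transport a chain contraction: a bounded acyclic complex of finitely generated projectives splits and is hence contractible, the contraction is preserved by the additive functor $-\otimes_{\zphig}\ztt$, and the associativity isomorphism $\bigl(D_*\otimes_{\Z[\pi]}\zphig\bigr)\otimes_{\zphig}\ztt\cong D_*\otimes_{\Z[\pi]}\ztt$ identifies the result with the complex computing $H_*(N;\ztt)$. Your route is a little cleaner in that it makes no further use of stable finiteness or of the particular shape of the boundary matrices once Lemma~\ref{lem:h0123} is granted, and it yields the slightly stronger conclusion $H_*(N;\ztt)=0$ in all degrees; the paper's route keeps everything at the level of invertible matrices, which is the currency used throughout its Section~2.
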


\begin{proof}
We prove the lemma in the closed case. The case of non-trivial boundary is once again left to the reader.

First we consider the case that $N$ is closed. We consider the chain complex of Lemma~\ref{lem:chain-complex-n} (1). We obtain a chain complex that computes $H_*(N;\zphig)$ by applying $\gamma$  to the matrices. Since $\phi$ is admissible it  follows easily from Lemma~\ref{lem:inv} that $1-\gamma(g)$ and $1-\gamma(h)$ are invertible in $\zphig$. Similarly  we obtain a chain complex that computes $H_*(N;\Z[t^{-1},t\doubler)$ by applying $\phi\colon \pi_1(N)\to \ll t\rr$ to the matrices. It follows again  from Lemma~\ref{lem:inv} that $1-\phi(g)$ and $1-\phi(h)$ are invertible in $\Z[t^{-1},t\doubler$.

Since $\zphig$ and $\Z[t^{-1},t\doubler$ are stably finite we can appeal to  Lemma~\ref{lem:turaev}. Our assumption that $H_1(N;\zphig)=0$ implies that $\g(B')$ is invertible over $\zphig$. The homomorphism $\phi\colon \G\to \ll t\rr$ induces a ring homomorphism $\zphig\to \Z[t^{-1},t\doubler$. Thus we see that $\phi(B')$ is also invertible over $\Z[t^{-1},t\doubler$. But, once again appealing to Lemma~\ref{lem:turaev}, this implies that $H_1(N;\Z[t^{-1},t\doubler)=0$.
\end{proof}

\begin{lemma} \label{lem:nonzero-on-boundary}
Let $N$ be a 3--manifold with non-empty boundary, furthermore let $\phi \in H^1(N;\Z)=\hom(\pi_1(N),\Z)$ be non-zero and let $\gamma\colon \pi_1(N)\to \G$ be an admissible  homomorphism. If $H_1(N;\zphig)=0$, then the restriction of $\phi$ to any boundary component is non-zero and  $H_*(\partial N;\zphig)=0$.
\end{lemma}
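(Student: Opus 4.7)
The plan is to first establish $H_*(\partial N;\zphig)=0$ by combining Lemma~\ref{lem:h0123} with Poincar\'e--Lefschetz duality, and then to deduce $\phi|_T\ne 0$ for each boundary torus $T$ from a leading-coefficient argument on $H_0(T;\zphig)$.

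For the first half, Lemma~\ref{lem:h0123} upgrades the hypothesis to $H_i(N;\zphig)=0$ for all $i$. The length-two chain complex from Lemma~\ref{lem:chain-complex-n}(2), once tensored with $\zphig$, is then a bounded acyclic complex of finitely generated free $\zphig$-modules, hence chain-contractible; dualizing yields $H^*(N;\zphig)=0$. Poincar\'e--Lefschetz duality for the 3-manifold $N$ with twisted coefficients in $\zphig$ gives $H_k(N,\partial N;\zphig)\cong H^{3-k}(N;\zphig)=0$, and the long exact sequence of the pair $(N,\partial N)$ immediately forces $H_*(\partial N;\zphig)=0$; in particular $H_0(T;\zphig)=0$ for every boundary torus $T$.

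For the second half, suppose for contradiction that $\phi|_T=0$ for some boundary torus $T$. Let $a,b$ generate $\pi_1(T)=\Z^2$, and set $H=\gamma(\pi_1(T))\subset \gphi$; the augmentation ideal $I_H\subset \Z[H]$ then lies inside $\Z[\gphi]\subset \zphig$, and the standard cellular computation for the torus gives $H_0(T;\zphig)=\zphig/I_H\zphig$. Since $\G$ is finitely generated and $\phi$ is non-zero, $\phi(\G)\subset \Z$ is infinite cyclic; choose $g_0\in\G$ with $\phi(g_0)$ a positive generator. Using the coset decomposition $\G=\bigsqcup_n g_0^n\gphi$, every element of $\zphig$ has a unique expansion $y=\sum_{n\ge n_0}x_n g_0^n$ with $x_n\in\Z[\gphi]$. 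A relation $1=\sum_i(h_i-1)y_i$ with $h_i\in H$ and $y_i=\sum_n x_{i,n}g_0^n$ then forces, by matching the $g_0^0$-component, the equation $1=\sum_i(h_i-1)x_{i,0}$ in $\Z[\gphi]$; but the quotient $\Z[\gphi]/I_H\Z[\gphi]\cong \Z[H\backslash \gphi]$ is non-zero (with $1$ mapping to the nontrivial basis element $[H]$), contradicting $1\in I_H\Z[\gphi]$.

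The principal obstacle is this second half: the inclusion $H\subset \gphi$ is crucial because it places the generators $h_i-1$ in filtration degree zero with respect to the $g_0$-grading, so that left multiplication by them preserves the grading and enables the coefficient-by-coefficient comparison. The first half is more routine, assembling Lemma~\ref{lem:h0123}, chain-level contractibility, and standard Poincar\'e--Lefschetz duality.
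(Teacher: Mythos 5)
Your second half is sound as pure algebra: the coset decomposition $\G=\bigsqcup_n g_0^n\gphi$, the observation that $h_i-1$ with $h_i\in H\subset\gphi$ preserves the $g_0$-grading, and the identification $\Z[\gphi]/I_H\Z[\gphi]\cong\Z[H\backslash\gphi]\ne 0$ are all correct, so the vanishing of $H_0(T;\zphig)$ really does force $\phi|_{\pi_1(T)}\ne 0$. The gap is in the first half, on which the second half depends. Twisted Poincar\'e--Lefschetz duality does \emph{not} give $H_k(N,\partial N;\zphig)\cong H^{3-k}(N;\zphig)$: the duality isomorphism is taken with respect to the involution $g\mapsto g^{-1}$ of $\Z[\pi]$, and this involution carries $\zphig$ to $\Z_{-\phi}[\G\doubler$. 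The correct statement therefore relates $H_k(N,\partial N;\zphig)$ to $H^{3-k}(N;\Z_{-\phi}[\G\doubler)$; this sign flip is exactly the content of Lemma~\ref{lem:leftright} (and, classically, of Milnor's duality theorem for torsion, where it appears as $t\mapsto t^{-1}$). To run your argument you would need $H_*(N;\Z_{-\phi}[\G\doubler)=0$, i.e.\ Lemma~\ref{lem:leftright} itself --- but the paper's proof of that lemma invokes the present lemma, so using it here is circular, and your write-up supplies no independent proof of it.

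For comparison, the paper proves the two assertions in the opposite order and avoids duality entirely: from $H_1(N;\zphig)=0$ one passes to $H_1(N;\ztt)=0$ (Lemma~\ref{lem:delta-t-nonzero}), so $\tau(N,\phi)$ is monic and in particular non-zero (Lemma~\ref{lem:novikov-zero}); this forces $b_1(\ker(\phi\colon\pi_1(N)\to\Z))$ to be finite, which by McMullen's argument already yields $\phi|_T\ne 0$ for every boundary component $T$; finally, once some $g\in\pi_1(T)$ has $\phi(\gamma(g))\ne 0$, the element $1-\gamma(g)$ is invertible over $\zphig$ by Lemma~\ref{lem:inv} and the cellular chain complex of the torus is visibly acyclic, giving $H_*(\partial N;\zphig)=0$ directly. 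If you want to keep your ordering, you must either prove $H_*(N;\Z_{-\phi}[\G\doubler)=0$ independently or replace the duality step.
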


\begin{proof}
By Lemma~\ref{lem:novikov-zero} we know that $\tau(N,\phi)$ is in particular non-zero. By standard arguments, see e.g.\ \cite[Chapter~4]{Tu01}, this implies that $b_1(\ker(\phi\colon \pi_1(N)\to \Z))$ is finite. By another standard argument, see e.g.\ \cite[Section~6]{McM02}, this implies that the restriction of $\phi$ to any boundary component of $N$ is non-zero. Then it follows easily from a straightforward calculation that  $H_*(\partial N;\zphig)=0$.
\end{proof}

We also have the following lemma which is basically a consequence of Poincar\'e duality.

\begin{lemma} \label{lem:leftright}
Let $N$ be a 3--manifold, let $\phi \in H^1(N;\Q)=\hom(\pi_1(N),\Q)$  and let $\gamma\colon \pi_1(N)\to \G$ be an admissible homomorphism. Then
$H_1(N;\zphig)=0$ if and only if $H_1(N;\Z_{-\phi}[\Gamma\doubler)=0$.
\end{lemma}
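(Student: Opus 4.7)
My plan is to combine Poincar\'e duality at the chain level with the observation that the standard anti-involution $g\mapsto g^{-1}$ of $\Z[\pi]$ extends to an anti-isomorphism of rings $\iota\colon\zphig \to \Z_{-\phi}[\G\doubler$, since $\phi(g^{-1})=-\phi(g)$. I would work out the closed case in detail; the bounded case is analogous.

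First, I would use Lemma~\ref{lem:chain-complex-n} (1) to replace $C_*(\wti N)$ by the length-$3$ complex $\mathcal C$ of free right $\Z[\pi]$-modules with boundary matrices $C,B,A$ of the prescribed form. Poincar\'e duality for closed oriented $3$-manifolds then provides a chain homotopy equivalence over $\Z[\pi]$ between $\mathcal C$ and the ``dualized'' complex $\mathcal C^\vee$ obtained by reversing degrees, transposing each boundary matrix, and applying the involution entry-wise (to convert the left-module structure coming from $\hom_{\Z[\pi]}(\cdot,\Z[\pi])$ back into a right-module structure). Concretely the boundary matrices of $\mathcal C^\vee$ become $\overline C^T,\overline B^T,\overline A^T$, whose new first entries $1-h^{-1}$ and $1-g^{-1}$ still satisfy the structural requirements of Lemma~\ref{lem:turaev}.

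Tensoring $\mathcal C$ with $\zphig$ and applying Lemma~\ref{lem:turaev} (Proposition~\ref{prop:stably-finite} provides stable finiteness, and Lemma~\ref{lem:inv} gives invertibility of $1-\gamma(g)$ and $1-\gamma(h)$), one obtains
\[
H_1(N;\zphig)=0 \iff \gamma(B')\text{ is invertible over }\zphig,
\]
where $B'$ is the displayed $(n-1)\times(n-1)$ sub-matrix of $B$. Applying the same lemma to $\mathcal C^\vee$ tensored with $\Z_{-\phi}[\G\doubler$ analogously yields
\[
H_1(N;\Z_{-\phi}[\G\doubler)=0 \iff \overline{\gamma(B')}^T\text{ is invertible over }\Z_{-\phi}[\G\doubler.
\]
Since $\iota$ sends a square matrix $M$ to $\overline M^T$ and, being an anti-isomorphism of rings, preserves invertibility, the two right-hand conditions are equivalent, finishing the closed case.

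The bounded case proceeds identically with Lemma~\ref{lem:chain-complex-n} (2) in place of (1) and Poincar\'e--Lefschetz duality in place of Poincar\'e duality; here Lemma~\ref{lem:nonzero-on-boundary} is invoked to see that $H_1(N;\zphig)=0$ forces $H_*(\partial N;\zphig)=0$ (and symmetrically over $\Z_{-\phi}[\G\doubler$), so that absolute and relative homology agree. The main technical point is the chain-level duality step: one has to verify that Poincar\'e (or Poincar\'e--Lefschetz) duality produces a dual complex whose matrices still fit the shape demanded by Lemma~\ref{lem:turaev}, after which the rest is routine bookkeeping with the involution.
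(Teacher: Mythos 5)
Your argument is correct, and it reaches the same destination by the same underlying mechanism as the paper -- Poincar\'e duality combined with the observation that the involution $g\mapsto g^{-1}$ interchanges $\zphig$ and $\Z_{-\phi}[\G\doubler$ -- but the implementation is genuinely different. The paper's proof is short and homological: it first upgrades the hypothesis to $H_*(N,\partial N;\zphig)=0$ via Lemma~\ref{lem:h0123}, Lemma~\ref{lem:nonzero-on-boundary} and the long exact sequence, and then invokes Poincar\'e duality together with the universal coefficient spectral sequence, in which the coefficient module gets dualized and the involution produces the sign flip on $\phi$. You instead stay entirely at the chain level: you take the explicit small complex of Lemma~\ref{lem:chain-complex-n}, use the chain homotopy equivalence with its conjugate-transposed dual, and reduce both vanishing statements to invertibility of $\gamma(B')$ over $\zphig$ versus $\overline{\gamma(B')}^{T}$ over $\Z_{-\phi}[\G\doubler$, which are equivalent because $M\mapsto \overline{M}^{T}$ is an anti-isomorphism of matrix rings and so preserves two-sided invertibility. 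What your route buys is the complete avoidance of the spectral sequence (everything is reduced to Lemma~\ref{lem:turaev} and Lemma~\ref{lem:inv}, which are already in use elsewhere); what it costs is more careful bookkeeping, above all in the bounded case, where the dual of the length-two complex computes \emph{relative} homology in reversed degrees, so one must check that the relevant relative group is the cokernel of $\overline{B}^{T}$ and then pass back to absolute homology via the long exact sequence of the pair. You correctly flag this and note that Lemma~\ref{lem:nonzero-on-boundary} supplies $H_*(\partial N;\Z_{\pm\phi}[\G\doubler)=0$; it is worth making explicit that there is no circularity here, since the vanishing of the boundary homology over $\Z_{-\phi}[\G\doubler$ follows from the non-vanishing of $\phi$ (equivalently $-\phi$) on each boundary torus, which is already guaranteed by the hypothesis over $\zphig$.
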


\begin{proof}
Suppose that $H_1(N;\zphig)=0$. It follows from Lemma~\ref{lem:h0123}, from Lemma~\ref{lem:nonzero-on-boundary} and from the long exact sequence in homology that $H_*(N,\partial N;\zphig)=0$. 
It now follows from Poincar\'e Duality and  the  Universal Coefficient Spectral Sequence (cf.\ \cite[p.~515]{McC01} and \cite{Le77}) that $H_1(N;\Z_{-\phi}[\Gamma\doubler)=0$.
\end{proof}

Now we are also in a position to prove Lemma~\ref{lem:novikov-zero}.
For the reader's convenience we recall the statement of the lemma.
\\

\noindent \textbf{Lemma~\ref{lem:novikov-zero}.}
\emph{Let $N$ be a 3-manifold and let  
$\phi\in H^1(N;\Z)=\hom(\pi_1(N),\ll t\rr)$ be a primitive class. Then $H_1(N;\ztt)=0$ if and only if $\tau(N,\phi)$ is monic.}
\\

\begin{proof}
We prove the lemma in the closed case. The case of non-trivial boundary is  left to the reader.  By Lemma~\ref{lem:chain-complex-n} (1) the chain complex
$C_*(\wti{N})\otimes_{\Z[\pi_1(N)]}\zt$ is of the form
\[ 0\to \zt\xrightarrow{C} \zt^n \xrightarrow{B} \zt^n \xrightarrow{A} R\to 0\]
where
\[ \ba{rcl}
A&=&\bp 1-t^k&*&\dots&*\ep,\\
B&=& \bp *&*\\
*&B'\ep, \\
C&=&\bp 1-t^l&*&\dots&*\ep^t,
\ea \]
where $B'$ is an $(n-1)\times (n-1)$--matrix over $\zt$ and where $k\ne 0$ and $l\ne 0$.

By \cite[Theorem~2.2]{Tu01} (see also Lemma~\ref{lem:torsion-2-complex}) we have \[\tau(N,\phi)=\det(B')\cdot (1-t^k)^{-1}\cdot (1-t^l)^{-1}.\]
Thus we see that $\tau(N,\phi)$ is monic if and only if $\det(B')$ is monic. 
But by Lemma~ \ref{lem:inv-polynomial} this is equivalent to $\det(B')$ being invertible over $\ztt$ and over $\zttleft$, which by Lemma~\ref{lem:turaev} is equivalent to $H_*(N;\ztt)=H_*(N;\zttleft)=0$, which  by Lemma~\ref{lem:h0123}  is equivalent to $H_1(N;\ztt)=H_1(N;\ztt)=0$. Finally by 
Lemma~\ref{lem:leftright} this is equivalent to $H_1(N;\ztt)=0$.
\end{proof}

\section{Proofs of known results}\label{section:known}

In this section we will outline proofs of statements about 3-manifolds, fiberedness and Novikov-Sikorav homology which explicitly or implicitly have already appeared in the literature. 
We do not make any claims to originality of the results.  We give  short self-contained proofs, some of which  we found independently of the much earlier proofs. We hope that this exposition of the beautiful subject of Novikov-Sikorav homology will be of interest to some readers.

%
\subsection{Proof of Theorem~\ref{thm:fib}} 
\label{section:homology-zero-if-fibered}
\label{section:fib}

We will prove Theorem~\ref{thm:fib} in the closed case.
We refer to \cite[Sections~6.1~and~6.2]{Fr07} for the standard technique for adapting  the proof to the case that $N$ has non-empty boundary. We leave the  details to the reader. 

Let $N$ be a 3--manifold,  let   $\phi \in H^1(N;\Q)=\hom(\pi_1(N),\Q)$ be a fibered class and let $\gamma\colon \pi_1(N)\to \G$ be an admissible homomorphism.
Note that for  $n\ne 0$ the class $\phi$ is fibered  if and only if $n\phi$ is fibered, furthermore we
have  $\Z_{\phi}[\pi\doubler=\Z_{n\phi}[\pi\doubler$. Therefore it follows that
we can assume that $\phi$ is in fact an  integral primitive class in $H^1(N;\Z)$.

We can view $N$ as the mapping torus of a surface diffeomorphism $\psi\colon \S\to \S$ of a surface $\S$ of genus $g$.
We pick a CW--structure for $\S$ with one 0--cell $v$, $2g$ 1--cells $e_1,\dots,e_{2g}$ and one 2--cell $f$.
Now let $\psi'$ be a cellular approximation of $\psi$.
Then $N$ is homotopy equivalent to the mapping torus $N'$ of $\psi'$.

It suffices to  show that $H_1(N';\zphig)=0$.
Note that every $i$--cell $c$ of $\S=\Sigma\times  \{-1\}$ gives rise to an $i$--cell of $N$ (which we also denote by $c$) and gives rise to a product 
$i+1$--cell of $N'$, denoted by $c'$. So we get a CW--structure of $N'$ with
\bn
\item one cell of dimension 0: $v$,
\item $2g+1$ cells of dimension 1: $v',e_1,\dots,e_{2g}$,
\item $2g+1$ cells of dimension 2: $e_1',\dots,e_{2g}',f$ and
\item one cell of dimension 3: $f'$.
\en
Note that $e_1,\dots,e_{2g}$ give rise to elements in $\pi_1(N')$ which we denote by the same symbols. Furthermore $v'$ gives rise to an element in $\pi_1(N')$ which we denote by $t$. With this notation the boundary of the 2--cell $e_i'$ is represented by $e_i^{-1}t\psi'(e_i)t^{-1}$.
Note that $\gamma(e_1),\dots,\gamma(e_{2g})\in \Gamma_\phi=\ker\{\phi\colon \G\to \Z\}$ and that  $\phi(t)=1$.

Picking appropriate lifts of these (oriented) cells of $N'$ to cells of the universal cover
$\wti{N}'$ we get bases for the
$\Z[\pi_1(N)]$--modules $C_i(\wti{N}')$, such that if $A_i$ denotes the matrix corresponding to
$\partial_i$, then the $A_i$ are of the form
\[ \ba{rcl}
A_1&=&(-1+t,*,\dots,*),\\[2mm]
A_2&=& \bp * &*\\ -\id +t\Big(\tmfrac{\partial \psi'(e_i)}{\partial e_j}\Big)&* \ep   \\[2mm]
A_3&=&(*,\dots,*,-1+t)^t.
   \ea \]
with  unspecified entries denoted by $*$. Here $A_2$ is a $(2g+1)\times (2g+1)$--matrix and $-\id +t\big(\frac{\partial \psi'(e_i)}{\partial e_j}\big)$ is a
$2g\times 2g$--submatrix. Note that $\gamma\big(\frac{\partial \psi'(e_i)}{\partial e_j}\big)$ is defined over $\Z[\G_{\phi}]$.

The boundary maps of the $\zphig$--complex $C_*(\wti{N}')\otimes_{\Z[\pi_1(N')]}\zphig$ are represented by the matrices
$\gamma(A_i)$. Since $\gamma$ is admissible it follows from Lemma~\ref{lem:inv} that the submatrices $\gamma(-1+t)$ and $\gamma\big(-\id +t\big(\frac{\partial \psi'(e_i)}{\partial e_j}\big)\big)$ are invertible over $\zphig$. It follows from Lemma~\ref{lem:h0123} that   $H_*(N';\zphig)=0$.
This completes the proof of Theorem~\ref{thm:fib}.

\subsection{Proof of Theorem~\ref{thm:mainthmlow}}\label{section:mainthmlow}
In this section we will prove a purely group theoretic statement that implies  Theorem~\ref{thm:mainthmlow}.

First we recall  the notion of a HNN-extension of a group $B$. Let $A\subset B$ be a subgroup, let $\gamma\colon A\to B$ be a monomorphism and let $B=\langle X|R\rangle$ be a presentation for $B$.
Then the corresponding HNN--extension is given by the presentation
\[  \langle X,t | R, \{ tat^{-1}=\gamma(a) |a\in A\} \rangle.\]
We will write $\langle B,t | t^{-1}at=\gamma(a) \rangle$ for such an extension. We say that the HNN--extension is \emph{ascending} if $A=B$.

It is well--known, see e.g.\ \cite[Theorem~B*]{Str84}, that any pair $(\pi,\phi)$ with $\pi$ a group and with $\phi\in \hom(\pi,\Q)$ non-zero can be presented by an HNN-extension. This means that
 there exists an HNN--extension $\langle B,t | t^{-1}at=\gamma(a) \rangle$ with $A$ and $B$ finitely generated groups and an isomorphism
$\pi\to \langle B,t | tat^{-1}=\gamma(a) \rangle$ such that the following diagram commutes
\[ \xymatrix{ \pi\ar[dr]_\phi \ar[rr]^-\cong&& \langle B,t | tat^{-1}=\gamma(a) \rangle \ar[dl] \\
&\Q&}\]
where the diagonal map on the right sends $t$ to a \emph{positive} rational number and it sends all elements in $B$ to $0$. By \cite[Proposition~4.3]{BNS87} the Bieri-Neumann-Strebel \cite{BNS87} invariant $\Sigma(\pi)\subset \hom(\pi,\Q)=H^1(N;\Q)$ of $\pi$ is precisely the set of all non-zero $\phi$'s in $\hom(\pi,\Q)=H^1(N;\Q)$ such that $(\pi,\phi)$ can be represented by an ascending HNN-extension. 

The following theorem follows from \cite[Theorem~E]{BNS87}, which in turn builds on Stallings' fibering theorem \cite{St61}, and the resolution of the Poincar\'e Conjecture by Perelman.

\begin{theorem}\label{thm:3-manifold-bns}
Let $N$ be a 3-manifold and let $\phi\in H^1(N;\Q)=\hom(\pi_1(N),\Q)$.
Then $\phi$ is fibered if and only if $\phi\in \Sigma(\pi_1(N))$. 
\end{theorem}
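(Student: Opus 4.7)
\emph{Proof plan.} My plan hinges on the fact that $\Sigma(\pi_1(N))$ is invariant under multiplication by positive rationals: rescaling $\phi$ leaves the defining ascending HNN-presentation unchanged, only the positive rational value $\phi(t)$ is rescaled. Thus in either direction I may normalise $\phi$ to be integral and primitive. The forward implication is then immediate: given a fibration $p\colon N \to S^1$ realising $\phi$ with fiber $\S$ and monodromy $\psi\colon \S \to \S$, the standard presentation
\[ \pi_1(N) \;\cong\; \langle\, \pi_1(\S),\, t \;\mid\; tat^{-1} = \psi_*(a),\ a \in \pi_1(\S)\,\rangle \]
of the mapping torus is visibly an ascending HNN-extension (with $A = B = \pi_1(\S)$ finitely generated and $\gamma = \psi_*$ in fact an isomorphism), and the composition $\pi_1(N) \to \Z$ sending $\pi_1(\S) \mapsto 0$ and $t \mapsto 1$ equals $\phi$. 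Hence $\phi \in \Sigma(\pi_1(N))$.

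For the converse I would assume $\phi \in \Sigma(\pi_1(N))$ and need to upgrade the given ascending HNN-presentation
\[ \pi_1(N) \;\cong\; \langle\, B,\, t \;\mid\; tat^{-1} = \gamma(a),\ a \in B\,\rangle \]
to a true splitting; equivalently, to show that $\ker\phi$ is finitely generated, so that Stallings' fibering theorem \cite{St61} produces a fibration $N \to S^1$ inducing $\phi$. (Before invoking Stallings one checks that the irreducible pieces of $N$ are aspherical, using Perelman's resolution of the Poincar\'e conjecture to rule out fake homotopy 3-spheres in the Kneser--Milnor decomposition.) The decisive input is the antipodal symmetry $\Sigma(\pi_1(N)) = -\Sigma(\pi_1(N))$ for 3-manifold groups, supplied by \cite[Theorem~E]{BNS87} via Poincar\'e duality: it yields $-\phi \in \Sigma(\pi_1(N))$ and hence a second ascending HNN-presentation running in the opposite $t$-direction; the two ascending chains of $t^{\pm n}$-conjugates of $B$ must then stabilise, which forces $\gamma$ to be surjective.

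The principal obstacle is precisely this antipodal symmetry: it is not a formal consequence of having an ascending HNN-presentation, and is the heart of \cite[Theorem~E]{BNS87}. Granted that symmetry, the rest of the argument is bookkeeping together with the citations of Stallings' theorem and geometrisation.
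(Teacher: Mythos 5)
Your proposal is correct and takes essentially the same route as the paper, whose entire proof of this statement is the citation of \cite[Theorem~E]{BNS87} together with Stallings' fibering theorem and Perelman's resolution of the Poincar\'e Conjecture; you simply unpack how those three inputs fit together (normalisation to a primitive integral class, the mapping-torus presentation for the forward direction, and the antipodal symmetry of $\Sigma$ plus Stallings for the converse). Since you correctly identify the symmetry $\Sigma(\pi_1(N))=-\Sigma(\pi_1(N))$ as the non-formal input supplied by Theorem~E rather than claiming to prove it, there is no gap relative to the paper's treatment.
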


In light of Theorems~\ref{thm:fib} and~\ref{thm:3-manifold-bns} the desired Theorem~\ref{thm:mainthmlow} is now a consequence of the following purely group theoretic theorem.

\begin{theorem}\label{mainthm-group-theory}
Let $\pi$ be a finitely presented group and let $\phi \in H^1(\pi;\Q)=\hom(\pi,\Q)$ be non-zero. If 
$H_1(\pi;\zphipi) =0$, then $\phi\in \Sigma(\pi)$.
\end{theorem}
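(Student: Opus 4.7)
My plan is to follow Sikorav's original strategy \cite{Si87} (see also Bieri \cite{Bi07}), reducing the vanishing of $H_1(\pi;\zphipi)$ to an algebraic solvability statement over the Novikov ring and then extracting from this an ascending HNN-presentation of $(\pi,\phi)$.

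After rescaling so that $\phi\colon\pi\to\Z$ is primitive, I would first arrange a finite presentation of the convenient form $\pi=\langle t,x_1,\dots,x_n\mid r_1,\dots,r_m\rangle$ with $\phi(t)=1$ and $\phi(x_i)=0$ for all $i$; this can be done starting from any finite presentation of $\pi$ by picking an element $t$ with $\phi(t)=1$ and replacing each original generator $y$ by $y\cdot t^{-\phi(y)}$. Let $X$ be the corresponding presentation $2$-complex. Since $H_1$ depends only on the $2$-skeleton of any $K(\pi,1)$, the group $H_1(\pi;\zphipi)=H_1(X;\zphipi)$ is computed from the Fox-calculus chain complex
\[\zphipi^m\xrightarrow{\partial_2}\zphipi^{n+1}\xrightarrow{\partial_1}\zphipi,\]
with $\partial_1$ sending the $t$-component to $t-1$ and the $x_i$-component to $x_i-1$. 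By Lemma~\ref{lem:inv}, $t-1$ is a unit in $\zphipi$, so $\ker\partial_1$ splits off as a free $\zphipi$-module of rank $n$ via projection onto the $x_i$-coordinates. A direct calculation then identifies $H_1(\pi;\zphipi)=0$ with the condition that the truncated Jacobian $\bar J=(\partial r_j/\partial x_i)_{1\le i\le n,\,1\le j\le m}$ induces a surjection $\zphipi^m\twoheadrightarrow\zphipi^n$, equivalently admits a right inverse over $\zphipi$.

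Next I would translate this algebraic surjectivity into the BNS condition. Set $B_0:=\langle x_1,\dots,x_n\rangle\le\ker\phi$, so $\pi=\langle B_0,t\rangle$. Each relator $r_j$ is a word of total $\phi$-degree zero and is therefore expressible as a finite word in the conjugates $y_{i,k}:=t^k x_i t^{-k}$ with $k$ ranging in a bounded interval $[a_j,b_j]$. Using the right inverse of $\bar J$, together with Lemma~\ref{lem:inv}, one solves the relator identities for the high-level conjugates: there exists a uniform $K\ge 0$ such that, for every $k\ge K$ and every $i$, the element $y_{i,k+1}$ lies in the subgroup generated by the $y_{j,\le k}$. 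Iterating, $B:=\langle y_{i,k}:1\le i\le n,\ 0\le k\le K\rangle$ is a finitely generated subgroup of $\ker\phi$ satisfying $tBt^{-1}\subseteq B$ and $\pi=\langle B,t\rangle$. This exhibits $(\pi,\phi)$ as an ascending HNN-extension, whence $\phi\in\Sigma(\pi)$ by \cite[Proposition~4.3]{BNS87}.

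The principal obstacle is the finite-extraction step. A right inverse of $\bar J$ over $\zphipi$ is a priori a Novikov matrix involving formal series supported at $\phi$-levels tending to $+\infty$, whereas the desired identity for $y_{i,k+1}$ must be a \emph{finite} word in the group $\pi$. The mechanism for extracting a finite group-theoretic identity from such a Novikov solution exploits the bounded Novikov width of the finitely many relators $r_j$: their Fox derivatives lie in $\Z[\pi]$ itself, so the lowest-$\phi$-degree component of a right inverse of $\bar J$ can be arranged to lie over the group ring $\Z[\pi_\phi]$, producing an honest identity in $\pi$ rather than merely in $\zphipi$. Making this finiteness argument precise — and verifying that what is produced is a genuine ascending HNN structure, not only a weaker quasi-ascending one — is the technical heart of the proof, and where I expect most of the work to reside.
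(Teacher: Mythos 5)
Your reduction of $H_1(\pi;\zphipi)=0$ to the right-invertibility of the truncated Fox Jacobian $\bar J$ over $\zphipi$ is fine: since $t-1$ is a unit, $\ker\partial_1$ is free of rank $n$ on the $x_i$-coordinates, and surjectivity onto a free module is equivalent to the existence of a right inverse. The genuine gap is the second half, which you yourself flag as the ``technical heart'': nothing in the proposal converts a right inverse of $\bar J$ into the statement that $y_{i,k+1}$ lies in the \emph{subgroup} generated by the $y_{j,\le k}$. Fox calculus only sees the relation module, i.e.\ it computes first homology of covers; an identity among Fox derivatives over $\Z[\pi_\phi]$ or $\zphipi$ yields at best a statement about homology classes of the $y_{i,k}$, never a membership statement in a subgroup of $\ker\phi$. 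Your proposed mechanism --- arranging the lowest $\phi$-degree part of the right inverse to lie over $\Z[\pi_\phi]$ so as to get ``an honest identity in $\pi$'' --- does not close this gap: it would still only produce an identity in (a quotient of) the relation module. Passing from $H_1$-information to generation of subgroups requires input at the level of $H_0$, and that input is missing.

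This is exactly where the paper's proof differs, and it runs in the contrapositive direction so that no finite extraction is ever needed. One first writes $(\pi,\phi)$ as an HNN extension $\langle B,t\mid tat^{-1}=\gamma(a),\ a\in A\rangle$ with $A\subset B$ finitely generated (Strebel), so that $\phi\in\Sigma(\pi)$ amounts to $\iota\colon A\to B$ being onto. If $\iota$ is not onto, then $H_0(A;\Z[\pi_\phi])=\Z[\pi_\phi/A]\to\Z[\pi_\phi/B]=H_0(B;\Z[\pi_\phi])$ is surjective but not injective, and one iteratively builds an explicit series $\sum_{i\ge 0}f_it^i$ in the kernel of the degree-zero Mayer--Vietoris map $\iota-t\gamma$; exactness of the Mayer--Vietoris sequence over $\zphipi$ then forces $H_1(\pi;\zphipi)\ne 0$. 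Here the infinite series is an asset rather than an obstacle, because it lives naturally in the Novikov ring. To salvage your forward approach you would need an argument of Bieri--Neumann--Strebel or Brown type (e.g.\ connectivity of the $\phi$-nonnegative part of the Cayley graph) to bridge algebraic solvability and subgroup generation; as written, that bridge is absent.
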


As we mentioned before, this theorem can already be found, with somewhat different language, in the work of Sikorav~\cite{Si87} and Bieri~\cite{Bi07}.

\begin{proof} 
Let $\pi$ be a finitely presented group and let $\phi \in \hom(\pi,\Q)$ be non-zero. As in the proof of Theorem~\ref{thm:fib} we can assume that $\phi\colon \pi\to \Z$ is an epimorphism. We write $\pi_\phi:=\ker\{\phi\colon \pi\to \Z\}$.

We pick an identification
\[ \pi=\langle B,t | tat^{-1}=\gamma(a)\mbox{ for all $a\in A\subset B$} \rangle\]
with $A$ and $B$ finitely generated as above. Now assume that
 $H_*(\pi;\zphipiright)=0$. We need to show that the inclusion map $\i\colon A\to B$ is an isomorphism, i.e.\ we still need to show it is surjective.

We first study the relationship between  $H_*(\pi;\zphipiright)$ and $\i$.
Note that $\phi$ vanishes on $A$ and $B$. Furthermore note that $\Z_{\phi}[\pi\doubler$ is flat over $\Z[\pi_\phi]$ since
 $\Z_{\phi}[\pi\doubler$ is a direct limit of free $\Z[\pi_\phi]$--modules. Therefore we get canonical isomorphisms
 \[ \ba{rclcl} H_i(A;\Z_{\phi}[\pi\doubler) &=&H_i(A;\Z[\pi_\phi])\otimes_{\Z[\pi_\phi]}\Z_{\phi}[\pi\doubler &=& H_i(A;\Z[\pi_\phi])\otimes_{\Z}\ztt,\\
 H_i(B;\Z_{\phi}[\pi\doubler) &=&H_i(B;\Z[\pi_\phi])\otimes_{\Z[\pi_\phi]}\Z_{\phi}[\pi\doubler&=&H_i(B;\Z[\pi_\phi])\otimes_{\Z}\ztt.\ea \]
Thus the Meyer--Vietoris type sequence can be written as follows (we refer to \cite{Bi75} and also \cite{FK06} for details)
\[
\ba{ccccllll}
&\hspace{-0.1cm}\hspace{-0.11cm}&&\dots&\hspace{-0.1cm}\to\hspace{-0.1cm}&\hspace{-0.1cm}
H_2(\pi;\Z_{\phi}[\pi\doubler)\to \\
\hspace{-0.1cm}\to\hspace{-0.1cm}\hspace{-0.1cm}&\hspace{-0.1cm}H_1(A;\Z[\pi_\phi])\hspace{-0.11cm}\otimes_\Z\hspace{-0.11cm}\ztt\hspace{-0.1cm}&\hspace{-0.1cm}
\xrightarrow{\i-t\gamma}\hspace{-0.1cm}&\hspace{-0.1cm}H_1(B;\Z[\pi_\phi])\hspace{-0.11cm}\otimes_\Z\hspace{-0.11cm}\ztt\hspace{-0.1cm}&\hspace{-0.1cm}\hspace{-0.1cm}
\to\hspace{-0.1cm}\hspace{-0.1cm}&\hspace{-0.1cm}
H_1(\pi;\Z_{\phi}[\pi\doubler)\to \\
\hspace{-0.1cm}\to\hspace{-0.1cm}\hspace{-0.1cm}&\hspace{-0.1cm}H_0(A;\Z[\pi_\phi])\hspace{-0.11cm}\otimes_\Z\hspace{-0.11cm}\ztt\hspace{-0.1cm}&\hspace{-0.1cm}
\xrightarrow{\i-t\gamma}\hspace{-0.1cm}&\hspace{-0.1cm}H_0(B;\Z[\pi_\phi])\hspace{-0.11cm}\otimes_\Z\hspace{-0.11cm}\ztt\hspace{-0.1cm}&\hspace{-0.1cm}\hspace{-0.1cm}
\to\hspace{-0.1cm}\hspace{-0.1cm}&\hspace{-0.1cm}
0.
\ea \]
Now assume that $\i\colon A\to B$ is not a surjective.
We will obtain a contradiction from the above long exact sequence by showing that this assumption implies  that
\[
H_0(A;\Z[\pi_\phi])\otimes_{\Z}\ztt \xrightarrow{\i-t\gamma} H_0(B;\Z[\pi_\phi])\otimes_{\Z}\ztt \] is not injective.
By the standard calculation of zeroth twisted homology, see e.g.\ \cite[Chapter~VI.3]{HS97}, we have a commutative diagram
\[ \xymatrix{
H_0(A;\Z[\pi_\phi])\ar[rr]^{\i}\ar[d]^\cong &&H_0(B;\Z[\pi_\phi])\ar[d]^\cong\\
\Z[\pi_\phi/A]\ar[rr]^{\i}&&\Z[\pi_\phi/B]. }\]
Since we assume that $\i\colon A\to B$ is not surjective it follows  from  the above commuting diagram that the map
\[
\i\colon  H_0(A;\Z[\pi_\phi]) \to H_0(B;\Z[\pi_\phi])\]
is surjective but not injective.
Therefore we can find a non-zero $f_0\in H_0(A;\Z[\pi_\phi])$ such that $\i(f_0)=0$ and we can  iteratively find a sequence of elements  $f_i\in H_0(A;\Z[\pi_\phi])$ with $i\geq 0$ such that
\[ \i(f_{i+1})\,=\,\gamma(f_i)\mbox{ for all $i\in \N$.} \]
Clearly $\sum_{i=0}^\infty f_it^i$  defines a non--trivial element in the kernel of the map
\[ H_0(A;\Z[\pi_\phi])\otimes_{\Z}\ztt \xrightarrow{\i-t\gamma} H_0(B;\Z[\pi_\phi])\otimes_{\Z}\ztt.\]
As observed above, this implies that $H_1(\pi;\zphipiright)\ne 0$. Thus we have obtained a contradiction.
\end{proof}
%
\subsection{Proof of Theorem~\ref{thm:thurston}}\label{section:thurston}
For the reader's convenience we recall the statement of Theorem~\ref{thm:thurston}.\\

\noindent \textbf{Theorem~\ref{thm:thurston}.}
\emph{Let $N$ be a fibered 3-manifold and let $\phi\in H^1(N;\Q)$ be a fibered class. Then there exists an open neighborhood $U\subset H^1(N;\Q)$ of $\phi$, such that any
$\psi\in U$ is fibered. }\\

The proof of Theorem~\ref{thm:thurston} will require the remainder of the section. So let $N$ be a 3-manifold and let $\phi\in H^1(N;\Q)$ be a fibered class.
As in the proof of Theorem~\ref{thm:fib} we can assume that $\phi\in H^1(N;\Q)$ is in fact a primitive element in $H^1(N;\Z)$.

Now assume that $\phi$ is a fibered class with corresponding fiber $\S$ and monodromy $\psi\colon \S\to \S$. We let $\psi'\colon \S\to \S$ be the map and $N'$ be the CW--complex from Section \ref{section:fib}.
 In the following we write $\pi=\pi_1(N)=\pi_1(N')$.
By Theorem~\ref{thm:mainthmlow} it is enough to show that there exists an open neighborhood $U$ of $\phi$ such that
$H_1(N';\Z_{\psi}[\pi\doubler)=0$ for all $\psi\in U$.

We let $t\in \pi$ be the element with  $\phi(t)=1$ as in Section \ref{section:fib}.
As in Section \ref{section:fib} we see that we can find bases for the complex $C_*(\wti{N}')$ of $\Z[\pi]$--right modules such that the corresponding
boundary maps $\partial_i$ are represented by matrices $A_i$  of the form
\[ \ba{rcl}
A_1&=&(-1+t,*,\dots,*),\\
A_2&=& \bp * &*\\ -\id +t\Big(\tmfrac{\partial \psi'(e_i)}{\partial e_j}\Big)&* \ep,   \\
A_3&=&(*,\dots,*,-1+t)^t,
   \ea \]
where $e_1,\dots,e_n$ are generators of $\pi_1(\Sigma)$, in particular they lie in $\pi_\phi=\ker\{\phi\colon \pi\to \Z\}$.
Now let $V\subset H^1(N;\Q)$ be an open neighborhood of $\phi$ such that $\psi(t)\ne 0$ for all $\psi\in V$.
Note that this implies that $-1+t$ is invertible over $\Z_{\psi}[\pi\doubler$ for any $\psi\in V$.

We write 
\[ B_2=-\id+t\Big(\smfrac{\partial \psi'(e_i)}{\partial e_j}\Big)_{i,j=1,\dots,n}.\]
As in Section \ref{section:fib} we get that the matrix $B_2$ over $\Z[\pi']$ is invertible in $\Z_{\phi}[\pi\doubler$.
Now we need the following lemma.

\begin{lemma}\label{lem:stillinv}
Let $A$ be a matrix over $\Z[\pi]$ of the form $\id+P$ where $P$ is defined over $\Z_{\phi}^{>0}[\pi\doubler\cap \Z[\pi]$.
 Then there exists an open neighborhood $V'\subset H^1(N;\Q)$ of $\phi$ such that $A$ is also invertible over $\zpsig$ for any $\psi\in V'$.
\end{lemma}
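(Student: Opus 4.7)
The plan is to combine the geometric series trick from the proof of Lemma~\ref{lem:inv} with a finiteness-and-continuity argument.

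First I would reduce the statement to checking that the entries of $P$ stay in the appropriate Novikov completion as $\psi$ varies. Since $P$ is a matrix over $\Z[\pi]$ (not over the completion), each entry of $P$ is a \emph{finite} integral linear combination of elements of $\pi$. Let $g_1,\dots,g_N\in \pi$ be the finitely many group elements that appear with non-zero coefficient in some entry of $P$. By assumption $P$ lies in $\Z_{\phi}^{>0}[\pi\doubler$, so $\phi(g_i)>0$ for every $i$.

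Next, I would produce the neighborhood. For each $g\in \pi$, evaluation $\hom(\pi,\Q)\to \Q$, $\psi\mapsto \psi(g)$, is a linear functional on the finite-dimensional vector space $H^1(N;\Q)$, hence continuous. Therefore
\[ V' \,:=\, \bigcap_{i=1}^N \{\psi\in H^1(N;\Q)\,:\,\psi(g_i)>0\} \]
is an open neighborhood of $\phi$. For every $\psi\in V'$ and every $i$ we have $\psi(g_i)>0$, so $P$, regarded as a matrix over $\Z[\pi]$, automatically lies in $\Z_{\psi}^{>0}[\pi\doubler$.

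Finally I would quote the geometric series argument used at the very end of the proof of Lemma~\ref{lem:inv}: since $P$ is a matrix over $\Z_{\psi}^{>0}[\pi\doubler$, the powers $P^k$ are matrices over $\Z_{\psi}^{>k\cdot\eps}[\pi\doubler$ for some $\eps>0$ (in fact for $\eps=\min_i \psi(g_i)$), so the series
\[ \id+\sum_{k=1}^\infty (-1)^k P^k \]
converges entrywise in $\zpsig$ and defines a two-sided inverse of $\id+P$. Hence $A=\id+P$ is invertible over $\zpsig$ for every $\psi\in V'$, as required. There is no real obstacle here; the only thing one must be careful about is the finiteness of the support of $P$, which is what allows a single neighborhood $V'$ to work uniformly for all entries.
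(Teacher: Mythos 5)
Your proof is correct and follows essentially the same route as the paper: both arguments use the finiteness of the support of $P$ over $\Z[\pi]$ to produce an open neighborhood $V'$ on which $P$ remains a matrix over $\Z_\psi^{>0}[\pi\doubler$, and then invoke the invertibility of $\id+P$ (the paper cites Lemma~\ref{lem:inv}, whose proof is exactly the geometric series argument you spell out). No gaps.
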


\begin{proof}
Since $P$ is defined over $\Z[\pi]$ there are only finitely many group elements appearing in the entries of $P$.
It follows immediately that there exists
a neighborhood $V'$ of $\phi$ such that for any $\psi\in V'$ the matrix $P$ is in fact a matrix with entries in  $\Z_\psi^{>0}[\pi\doubler$.
It follows from Lemma~\ref{lem:inv} that $A=\id+P$ is invertible over $\Z_\psi^{>0}[\pi\doubler$ for any $\psi\in V'$.
\end{proof}

Let $V'$ be an open set containing $\phi$ as in Lemma~\ref{lem:stillinv} for the matrix 
\[ B_2=-\id+t\Big(\smfrac{\partial \psi'(e_i)}{\partial e_j}\Big)_{i,j=1,\dots,n}.\]
We  claim that $V\cap V'\subset H^1(N;\Q)$ has the required properties. Indeed, given any $\psi\in V\cap V'$ we have, by the discussion above,
that $-1+t$ and $B_2$  are invertible in $\zpsig$, hence $H_*(N';\Z_{\psi}[\pi\doubler)=0$ by Lemma~\ref{lem:turaev}.
This completes the proof of Theorem~\ref{thm:thurston}.

%

\section{Proof of Proposition~\ref{prop:surface-connected}}
\label{section:connected}
In the following recall that by a 
Thurston norm minimizing surface we mean a properly embedded oriented surface  $\Sigma$ in a 3-manifold $N$ which satisfies the following two conditions:
 \bn
\item $\chi_-(\Sigma)=\|\op{PD}([\Sigma])\|_T$,
\item there is no non-empty collection of components of $\Sigma$ that is (with the given orientation) null-homologous.
\en
Our goal is to prove the following proposition from the introduction.\\

\noindent \textbf{Proposition~\ref{prop:surface-connected}.}
\emph{Let $N$ be a 3-manifold and let $\phi\in H^1(N;\Z)$ be a primitive class. If $\tau(N,\phi)\ne 0$, then any Thurston norm minimizing surface dual to $\phi$ is connected.}
\\

In the proof of Proposition~\ref{prop:surface-connected} we will need the following lemma.

\begin{lemma}\label{lem:find-weight}
Let $N$ be a 3-manifold, let $\phi \in H^1(N;\Z)$ and let $\Sigma$ be a disconnected
Thurston norm minimizing surface dual to $\phi$. Then there exist components $\Sigma_1,\dots,\Sigma_k$ of $\Sigma$ and $n_1,\dots,n_k\in \Z$ such that the following conditions hold:
\bn
\item  $[\Sigma]=\sum_{i=1}^k n_i[\Sigma_i]$,
\item $N\sm \nu (\Sigma_1\cup \dots \cup \Sigma_k)$ is connected,
\item $\sum_{i=1}^k |n_i|>1$.
\en
\end{lemma}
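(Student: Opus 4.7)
The plan is to encode $\Sigma\subset N$ in a finite directed multigraph $G$: its vertices $v_1,\dots,v_r$ are the components $M_1,\dots,M_r$ of $N\sm\nu\Sigma$, and each component $\Sigma_j$ of $\Sigma$ gives an oriented edge $e_j$ going from the vertex $s(j)$ containing $\Sigma_j\times\{-1\}$ to the vertex $t(j)$ containing $\Sigma_j\times\{+1\}$. Connectedness of $N$ implies that of $G$, and for any $J\subset\{1,\dots,m\}$ the complement $N\sm\nu(\bigcup_{j\in J}\Sigma_j)$ is connected if and only if $E(G)\sm\{e_j\}_{j\in J}$ contains a spanning tree of $G$. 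Thus condition~(2) becomes a combinatorial statement.

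Comparing outward normals on $\partial M_i$ with the chosen orientation of each $\Sigma_j$ yields the \emph{cut relations}
\[
R_i\,:=\,\sum_{s(j)=i}[\Sigma_j]-\sum_{t(j)=i}[\Sigma_j]\,=\,0\in H_2(N,\partial N),\qquad i=1,\dots,r,
\]
since the right-hand side is $\partial[M_i]$ in the relative chain complex. Fix a spanning tree $T\subset G$ and set $E':=E(G)\sm T$. Summing $R_i$ over the vertices on the $s(j)$-side of $T\sm\{e_j\}$ expresses each $[\Sigma_j]$ with $e_j\in T$ as a signed combination of the $[\Sigma_{j'}]$ with $e_{j'}\in E'$ crossing the fundamental $e_j$-cut; substituting into $[\Sigma]=\sum_j[\Sigma_j]$ yields
\[
[\Sigma]\,=\,\sum_{e_{j'}\in E'}n_{j'}\,[\Sigma_{j'}],\qquad n_{j'}\,=\,1-\ell_{j'},
\]
where $\ell_{j'}$ is the signed length of the unique $T$-path from $s(j')$ to $t(j')$ (forward traversals contribute $+1$, backward $-1$). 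This establishes~(1); (2) is immediate because $G-E'=T$ is connected, and we drop the indices with $n_{j'}=0$ to obtain the final collection.

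The main obstacle is verifying $\sum|n_{j'}|>1$. The Thurston-minimizing hypothesis rules out the all-zero case ($[\Sigma]\neq 0$) and the case of a single $n_{j^*}=+1$ (which would make $\{\Sigma_j\}_{j\neq j^*}$ a non-empty, since $m\geq 2$, null-homologous sub-collection). The subtle remaining sub-case is $n_{j^*}=-1$ with all other $n_{j'}=0$, which is not forbidden by Thurston minimality directly. To handle it, fix a base vertex $v_0\in V(G)$ and let $a\colon V(G)\to\Z$ be the height function with $a(v_0)=0$ and $a(t(j))-a(s(j))=1$ on every tree edge. Then $a(t(j))-a(s(j))=\ell_j$ on each non-tree edge as well, so in this sub-case the difference equals $1$ on every edge except $e_{j^*}$, where it equals $2$. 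A short parity check shows that $b(v):=\lfloor a(v)/2\rfloor$ satisfies $b(t(j))-b(s(j))\in\{0,1\}$ on every edge and takes value $1$ on $e_{j^*}$. Hence $S:=\{j:b(t(j))-b(s(j))=1\}$ is non-empty, and the identity $\sum_{j\in S}e_j=-\sum_i b(v_i)R_i$ in $\Z^m$ forces $\sum_{j\in S}[\Sigma_j]=0$ in $H_2(N,\partial N)$, contradicting Thurston minimality and completing the proof.
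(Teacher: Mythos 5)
Your proof is correct, but it takes a genuinely different route from the paper's. The paper follows Turaev's weight-shifting scheme: starting from the constant weight $w\equiv 1$ on the components of $\Sigma$, it repeatedly modifies $w$ along the boundary relation $\sum_{i\in I}\eps_i[\Sigma_i]=0$ of a complementary region $Y$, using the $\chi_-$-minimality of $\Sigma$ to rule out the cases $\sum_{I_+}\chi_-\neq\sum_{I_-}\chi_-$ and the no-null-homologous-subcollection condition to guarantee $I_\pm\neq\emptyset$; each step strictly decreases the number of components carrying nonzero weight until the complement of the weighted components is connected. You instead work in the cut space of the dual graph in one shot: a spanning tree $T$ lets you eliminate the tree-edge classes via the fundamental cut relations, producing the explicit coefficients $n_{j'}=1-\ell_{j'}$ on the non-tree edges, and connectivity of the complement is automatic because $T$ survives. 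One notable difference: your argument never invokes $\chi_-$-minimality, only condition (2) of the definition (no non-empty null-homologous subcollection of components), so it proves the lemma for any disconnected properly embedded oriented surface satisfying that condition alone, and it yields an explicit formula for the coefficients; the price is the delicate sub-case where a single $n_{j^*}=-1$ survives, which you handle with the height function $a$ and the floor construction $b=\lfloor a/2\rfloor$. I checked that step: the identity $\sum_{j\in S}e_j=-\sum_i b(v_i)R_i$ and the parity analysis are correct (also for loop edges, which automatically have $n_j=1$ and so cannot occur in that sub-case). Both proofs ultimately rest on the same geometric input, namely the relations $R_i=0$ in $H_2(N,\partial N)$ coming from the complementary regions, but the combinatorial engines driving them are different.
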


The proof of the lemma makes very much use of the ideas of Turaev's that were employed in the proof of \cite[Lemma~1.2]{Tu02}.

\begin{proof}
Let $\Sigma$ be a disconnected
Thurston norm minimizing surface dual to $\phi$.  We denote its components by $\Sigma_1,\dots,\Sigma_r$. We define a \emph{weight} to be a function $w\colon \{1,\dots,r\}\to \Z_{\geq 0}$. Given a weight $w$ we define
\bn
\item $\Sigma(w)$ to be the disjoint union of $w(i)$-parallel copies of each $\Sigma_i$,
\item $\widehat{\Sigma(w)}$ to be the union of all $\Sigma_i$'s with $w(i)>0$,
\item $|w|:=\tmsum{i=1}{k} |w(i)|$,
\item $N(w):=\# \{i\in \{1,\dots,r\}\,|\, w(i)\ne 0\}$.
\en

\begin{claim}
Let $w$ be a weight with $b_0\big(N\sm \widehat{\Sigma(w)}\big)>1$, $[\Sigma(w)]=[\Sigma]$, $|w|>1$ and with $\chi_-(\Sigma(w))=\|\phi\|_T$. Then there exists a weight $v$ with $[\Sigma(v)]=[\Sigma]$,  $\chi_-(\Sigma(v))=\|\phi\|_T$, $|v|>1$ and such that $N(v)<N(w)$.
\end{claim}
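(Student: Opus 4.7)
The plan is to pick a connected component $M$ of $N\setminus\widehat{\Sigma(w)}$ and use a homology relation coming from the boundary of its closure $\overline M$ to absorb one index in the support of $w$ into the others. Concretely, viewing $\overline M$ as a compact $3$-submanifold and taking its boundary in $H_2(N,\partial N)$, I would obtain a relation
\[ \sum_{i=1}^r \epsilon_i\,[\Sigma_i] \,=\, 0 \quad\text{in } H_2(N,\partial N),\]
where $\epsilon_i\in\{-1,0,+1\}$ is the algebraic count of sides of $\Sigma_i$ lying in $M$. Set $I^\pm := \{i:\epsilon_i=\pm 1\}\subset\{i:w(i)>0\}$. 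Because $N\setminus\widehat{\Sigma(w)}$ is disconnected, a path in $N$ from $M$ to another component must cross some $\Sigma_i$ with exactly one side in $M$, so $I^+\cup I^-\neq\emptyset$. If $I^-$ were empty, then $\sum_{i\in I^+}[\Sigma_i]=0$ would give a nonempty null-homologous collection of components of $\Sigma$, contradicting condition~(2) of Thurston norm minimality; symmetrically $I^+\neq\emptyset$.

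After swapping $I^+$ and $I^-$ if necessary, I would assume
\[ \sum_{i\in I^+}\chi_-(\Sigma_i)\,\geq\,\sum_{i\in I^-}\chi_-(\Sigma_i)\]
and pick $i_0\in I^+$ minimizing $w(i_0)$. Define the new weight by
\[ v(i)\,:=\,\begin{cases} w(i)-w(i_0), & i\in I^+,\\ w(i)+w(i_0), & i\in I^-,\\ w(i), & \text{otherwise.}\end{cases}\]
The minimality of $w(i_0)$ guarantees $v(i)\geq 0$ for all $i$, while $v(i_0)=0$; since no new indices enter the support, $N(v)\leq N(w)-1 < N(w)$. The relation $\sum_{i\in I^+}[\Sigma_i]=\sum_{i\in I^-}[\Sigma_i]$ yields $[\Sigma(v)]=[\Sigma(w)]=[\Sigma]$, and a direct calculation gives
\[ \chi_-(\Sigma(v))-\chi_-(\Sigma(w)) \,=\, w(i_0)\Bigl(\sum_{i\in I^-}\chi_-(\Sigma_i) - \sum_{i\in I^+}\chi_-(\Sigma_i)\Bigr)\,\leq\,0,\]
which combined with $\chi_-(\Sigma(v))\geq\|\phi\|_T$ forces $\chi_-(\Sigma(v))=\|\phi\|_T$.

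It remains to check $|v|>1$. Certainly $|v|\neq 0$, since $[\Sigma]\neq 0$; and $|v|=1$ would give $\Sigma(v)=\Sigma_j$ for some $j$ and hence $\sum_{i\neq j}[\Sigma_i]=[\Sigma]-[\Sigma_j]=0$, again contradicting condition~(2) (using $r\geq 2$). The main obstacle will be orchestrating the replacement to satisfy all four requirements simultaneously: non-negativity of $v$, invariance of $[\Sigma]$, non-increase of $\chi_-$, and $|v|>1$. The key point is that taking $i_0$ of minimal weight on the side with \emph{larger} total $\chi_-$ handles the first three at once, while condition~(2) of Thurston norm minimality is precisely what rules out $|v|=1$.
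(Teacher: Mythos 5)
Your proof is correct and follows essentially the same route as the paper's: take the boundary relation from the closure of a complementary component, use condition~(2) of Thurston norm minimality to see that both signs occur, and transfer the minimal weight from one side of the relation to the other to kill an index of the support. The only (harmless) differences are that you collapse the paper's two-case analysis on $\sum_{I^+}\chi_-$ versus $\sum_{I^-}\chi_-$ into one step via the inequality $\chi_-(\Sigma(v))\geq\|\phi\|_T$, and you derive $|v|>1$ from condition~(2) where the paper simply notes $|v(j)|=|w(j)+w(i)|\geq 2$ for $j\in I_-$.
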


Let $w$ be a weight with $b_0\big(N\sm \widehat{\Sigma(w)}\big)>1$, $[\Sigma(w)]=[\Sigma]$ and  $\chi_-(\Sigma(w))=\|\phi\|_T$. We pick a component $Y$ of $N(w)$. We denote by $\ol{Y}$ the closure of $Y$ in $N$. Then there exists a subset $I\subset \{1,\dots,r\}$ such that $w(i)>0$ for all $i\in I$ and such that the boundary of $\overline{Y}$ is the union of all $\Sigma_i, i\in I$  together with a subsurface of $\partial N$. For each $i\in I$ we denote by $\eps_i\in \{-1,1\}$ the sign such that for the oriented boundary we have $\partial \overline{Y}=\cup_{i\in I} \eps_i \Sigma_i$.
We define $I_+=\{ i\in I\,|\, \eps_i=1\}$ and similarly we define $I_-$.

It follows from the second condition on a Thurston norm minimizing surface, that $I_+\ne \emptyset$ and $I_-\ne\emptyset$.

We first consider the case that
\[ \smsum{i\in I_+}{} \chi_-(I_+)\,\,>\,\, \smsum{i\in I_-}{} \chi_-(I_-).\]
Then we define a new weight $v$ by
\[ v(j)\,\,=\,\, \left\{ \ba{rl} w(j),&\mbox{ if }j\not\in I,\\
w(j)-1,&\mbox{ if }j\in I_+,\\
w(j)+1,&\mbox{ if } j\in I_-.\ea\right.\]
It is clear that $[\Sigma(v)]=[\Sigma(w)]$ and $\chi_-(\Sigma(v))<\chi_-(\Sigma(w))$. But this contradicts the hypothesis
that $\chi_-(\Sigma(w))=\|\phi\|_T$. So this case can in fact not occur.
Similarly the  case that $\sum_{i\in I_-} \chi_-(I_-)> \sum_{i\in I_+} \chi_-(I_+)$ cannot occur.

Thus we only have to deal with the case that 
\[ \smsum{i\in I_+}{} \chi_-(I_+)\,\,=\,\, \smsum{i\in I_-}{} \chi_-(I_-).\]
We pick $i\in I$ such that $w(i)\leq w(j)$ for all $j\ne i\in I$. Without loss of generality we assume that $i\in I_+$. 
Then we define a new weight $v$ by
\[ v(j)\,\,=\,\, \left\{ \ba{rl} w(j),&\mbox{ if }j\not\in I,\\
w(j)-w(i),&\mbox{ if }j\in I_+,\\
w(j)+w(i),&\mbox{ if } j\in I_-.\ea\right.\]
It follows from
\[ \smsum{i\in I_+}{} \chi_-(I_+)\,\,=\,\, \smsum{i\in I_-}{} \chi_-(I_-)\]
that $\chi_-(\Sigma(v))=\chi_-(\Sigma(w))$. As above we have  $[\Sigma(v)]=[\Sigma(w)]$. Clearly we have $N(v)<N(w)$.
We just mentioned that $I_-\ne \emptyset$. So let $j\in I_-$. 
Then $|v|\geq |v(j)|=|w(j)+w(i)|\geq 2$. This concludes the proof of the claim.

Now we return to the actual proof of the claim.
If $N\sm \nu \Sigma$ is connected there is nothing to prove. Now suppose that is not the case. We consider the weight $w(i)=1$ for $i=1,\dots,r$. Since $\Sigma$ is disconnected we have $|w|=r>1$. 
We apply the claim iteratively till we end up with  a weight $v$ 
with $[\Sigma(v)]=[\Sigma(w)]=[\Sigma]$, $b_0\big(N\sm \widehat{\Sigma(v)}\big)=1$ and $|v|>1$. 
Then the components with $v(i)\ne 0$ and $n_i:=v(i)$, $i=1,\dots,r$ have the right properties.
\end{proof}

The following lemma is a special case of  \cite[Proposition~3.4]{FK06}.

\begin{lemma}\label{lem:fk06}
Let $N$ be a 3-manifold and let $\phi\in H^1(N;\Z)$ be non-zero. Let $F$ be a surface with components   $F_1,\dots,F_k$ and let $n_1,\dots,n_k\in \N$ such that the following hold:
\bn
\item $\op{PD}(\phi)=\sum_{i=1}^k n_i[F_i]$,
\item $N\sm \nu F$ is connected.
\en
If $\tau(N,\phi)\ne 0$, then there exists precisely one $i$ with $n_i\ne 0$.
\end{lemma}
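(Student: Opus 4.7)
The plan is to compute with the infinite cyclic cover $p\colon\widetilde N\to N$ associated to $\phi$ and apply a Mayer--Vietoris dimension count. After replacing $\phi$ by its primitive multiple (this leaves $\tau(N,\phi)\ne 0$ intact and scales all $n_i$ by a common nonzero factor, so the number of nonzero $n_i$'s is unchanged) I may assume $\phi$ is primitive; then $\widetilde N$ is connected with deck group $\langle t\rangle\cong\Z$, and I set $\L:=\Q[t^{\pm 1}]$.

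The crucial preliminary observation is that $\phi|_{\pi_1(F_i)}=0$ for every $i$, irrespective of whether $n_i=0$: for a loop $\gamma\subset F_i$ we have $\gamma\cdot F_j=0$ for $j\ne i$ by disjointness, and $\gamma\cdot F_i=0$ by pushing $F_i$ off itself along its trivial normal bundle, so $\phi(\gamma)=\sum_j n_j(\gamma\cdot F_j)=0$. The same intersection argument shows $\phi|_{\pi_1(M)}=0$, where $M:=N\sm\nu F$. Consequently the preimage $p^{-1}(F_i)$ consists of $\Z$ disjoint copies of $F_i$, and $\widetilde M\cong M\times\Z$, with $t$ acting by shifting the integer coordinate in each case.

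I would then write $N$ as the pushout $N=M\cup_{F\times\{-1,+1\}}(F\times[-1,1])$ and lift this decomposition to $\widetilde N$. The key geometric bookkeeping is that crossing $F_i$ from its negative side to its positive side increases the $\phi$-level by $n_i$, so the level-$j$ lift of $F_i$ has its negative face in $M\times\{j\}$ and its positive face in $M\times\{j+n_i\}$. Under the identifications
\[H_0(\widetilde M;\Q)\cong\L,\qquad H_0(\widetilde F;\Q)\cong\L^k,\qquad H_0(\widetilde F\times\{-1,+1\};\Q)\cong\L^{2k},\]
the Mayer--Vietoris connecting map at degree $0$ becomes a $\L$-linear map $\L^{2k}\to\L^{k+1}$ whose first row has successive entries $(1,t^{n_i})$ for $i=1,\dots,k$, and whose remaining $k$ rows each record that the two faces of the collar of $F_i$ both map to $F_i$.

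The final step is to tensor the Mayer--Vietoris long exact sequence with $\Q(t)$ over $\L$. The hypothesis $\tau(N,\phi)\ne 0$ is equivalent to $H_*(\widetilde N;\Q)\otimes_\L\Q(t)=0$, while $H_0(\widetilde N;\Q)\otimes_\L\Q(t)$ vanishes unconditionally because $H_0(\widetilde N;\Q)\cong\L/(t-1)$ is $\L$-torsion. Hence after tensoring the connecting map is an isomorphism of $\Q(t)$-vector spaces, and comparing dimensions forces $2k=k+1$, i.e.\ $k=1$. Since $\phi\ne 0$ the unique coefficient $n_1$ must be nonzero, which gives the conclusion. The main content lies in the attaching-map bookkeeping of the third paragraph; once that is set up correctly, the long exact sequence and the dimension count finish the proof.
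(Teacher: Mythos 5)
Your proof is correct, and since the paper does not prove this lemma at all (it is quoted as a special case of \cite[Proposition~3.4]{FK06}), there is no in-text argument to compare against; your Mayer--Vietoris computation on the infinite cyclic cover is exactly the standard technique behind the cited result and the related estimates of McMullen and Turaev. Three remarks. First, your count $2k=k+1$ proves the \emph{stronger} statement that $F$ is connected; this is consistent with the lemma (together with $\phi\ne 0$ it yields ``precisely one $n_i\ne 0$'') and matches how the lemma is actually used in the proof of Proposition~\ref{prop:surface-connected}, where all the $n_i$ are nonzero. Second, the reduction to primitive $\phi$ is asserted too quickly: to write $\op{PD}(\phi/d)=\sum_i (n_i/d)[F_i]$ you need $d\mid n_i$, which is true but needs the observation that, since $N\sm \nu F$ is connected, for each $i$ there is a loop $\gamma_i$ meeting $F$ in a single positive point of $F_i$, so that $n_i=\phi(\gamma_i)\in \im \phi = d\Z$; alternatively, the reduction can be avoided altogether by running the identical argument on the cover corresponding to $\ker\phi$, whose deck group is $\im\phi\cong \Z$ in any case. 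Third, a small terminological slip: the map $\Q[t^{\pm 1}]^{2k}\to \Q[t^{\pm 1}]^{k+1}$ you analyse is the middle map $H_0$(intersection)$\to H_0$(pieces) of the Mayer--Vietoris sequence, not the connecting homomorphism; your exactness argument (injectivity after tensoring with $\Q(t)$ because $H_1(\widetilde N;\Q)\otimes\Q(t)=H_1(N;\Q(t))=0$, surjectivity because $H_0(\widetilde N;\Q)$ is $\Q[t^{\pm1}]$-torsion) is nonetheless the right one.
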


\begin{proof}[Proof of Proposition~\ref{prop:surface-connected}]
Let $N$ be a 3-manifold and let $\phi\in H^1(N;\Z)$ be a primitive class with $\tau(N,\phi)\ne 0$.
Suppose there exists a disconnected  Thurston norm minimizing surface $\Sigma$ dual to $\phi$. By Lemma~\ref{lem:find-weight} there exist components $\Sigma_1,\dots,\Sigma_k$ of $\Sigma$ and $n_1,\dots,n_k\in \Z$ such that the following conditions hold:
\bn
\item  $[\Sigma]=\sum_{i=1}^k n_i[\Sigma_i]$,
\item $N\sm \nu (\Sigma_1\cup \dots \cup \Sigma_k)$ is connected,
\item $\sum_{i=1}^k |n_i|>1$.
\en
By Lemma~\ref{lem:fk06}, applied to $F=\Sigma_1\cup \dots \cup \Sigma_k$, we see that there exists precisely one $n_i\ne 0$. 
But from (3) it follows that $n_i>1$. But this shows that $\phi=n_i\op{PD}([\Sigma_i])$ is not primitive. Thus we obtained a contradiction.
\end{proof}

\section{Noncommutative torsion and degrees}
\label{section:torsion}

%
\subsection{The definition of noncommutative torsion}
\label{section:degrees}
Throughout this section let $\Gamma$ be a torsion-free elementary-amenable group.
Kropholler--Linnell--Moody \cite[Theorem~1.4]{KLM88} showed that the group ring $\Z[\Gamma]$ is  a domain.  Since $\Gamma$ is in particular amenable it follows from \cite[Corollary~6.3]{DLMSY03} that $\Z[\Gamma]$ is an Ore domain, which means that it has a classical ring of fractions $\K(\G)$. We refer to \cite[Section~8.2.1]{Lu02} for a helpful survey on Ore domains, Ore localizations and their properties.

Given a homomorphism $\phi\colon \Gamma\to \Q$ and a non-zero element 
$p=\sum_{g\in \Gamma} p_gg\in \Z[\Gamma]$ we write
\[ \deg_\phi(p)=\max\{ \phi(g)-\phi(h)\,|\,g,h\in \Gamma\mbox{ with }p_g\ne 0\mbox{ and }p_h\ne 0\}.\]
Since $\Z[\Gamma]$ is a domain this is in fact a homomorphism 
\[ \deg_\phi\colon(\Z[\Gamma]\sm \{0\},\,\,\cdot\,\,)\to (\Q_{\geq 0},+)\]
 of monoids. In particular we can extend it in the obvious way to a group homomorphism 
\[ \deg_\phi\colon (\K(\Gamma)^\times,\,\,\cdot\,\,) \to (\Q,+),\]
where $\K(\Gamma)^\times=\K(\Gamma)\sm \{0\}$. Since the target is commutative this gives rise to a homomorphism
\[ \deg_\phi\colon (\K(\Gamma)^\times_{\ab},\,\,\cdot\,\, ) \to (\Q,+),\]
where $\K(\Gamma)^\times_{\ab}$ denotes the abelianization of $\K(\Gamma)^\times$. 
Furthermore  we extend this definition to $0$ by setting $\deg_\phi(0)=-\infty$.

Now suppose that $N$ is a 3-manifold and $\phi\in H^1(N;\Q)$.
Let $\gamma\colon \pi_1(N)\to \G$ be a tfea-admissible homomorphism for $(N,\phi)$. If $H_*(N;\K(\Gamma))\ne 0$, then we set $\tau(N,\gamma)=0$. Otherwise we can consider the corresponding twisted Reidemeister torsion
\[ \tau(N,\gamma)\in \K(\Gamma)^\times_{\ab}\]
as defined in~\cite{Mi66,Tu01,Fr07}.
Furthermore we can consider the corresponding degree
\[ \deg_\phi(\tau(N,\gamma))\in \Q\cup \{-\infty\}.\]
A similar definition also applies to 2-complexes instead of 3-manifolds.

Now we recall a convenient method for calculating the twisted Reidemeister torsion. First of all, given a square matrix $A$ over $\K(\G)$ that is not invertible we set $\det(A)=0$.
Otherwise we denote by
$\det(A)\in \K(\G)^\times_{\ab}$ its Dieudonn\'e determinant, see e.g.\ \cite{Ros94} for the definition and properties.

Later on we will use the following lemma to calculate the Reidemeister torsion of a chain complex of length two which is essentially a special case of \cite[Theorem~2.2]{Tu01} and which also appeared as \cite[Lemma~6.2]{Fr07} and \cite[Lemma~6.6]{Fr07}.

\begin{lemma}\label{lem:torsion-2-complex}
If 
\[ 0\to \K(\Gamma)\xrightarrow{\bp c\,*\dots\,*\ep^t} \K(\Gamma)^n \xrightarrow{\bp *&*\\
*&B\ep} \K(\Gamma)^n \xrightarrow{\bp a\,*\dots\,*\ep}  \K(\Gamma)\to 0\]
is a chain complex of based right $\K(\G)$-modules with  $c\ne 0$ and $a\ne 0$,  then the corresponding Reidemeister torsion equals
\[ \det(B)\cdot c^{-1}\cdot a^{-1}.\]
Similarly, if 
\[0\to \K(\Gamma)^{n-1} \xrightarrow{\bp *\\
B\ep} \K(\Gamma)^n \xrightarrow{\bp a\,*\dots\,*\ep}  \K(\Gamma)\to 0\]
is a chain complex of based right $\K(\G)$-modules with $a\ne 0$,  then the corresponding Reidemeister torsion equals
\[ \det(B)\cdot  a^{-1}.\]
\end{lemma}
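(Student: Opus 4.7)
The plan is to reduce both complexes to simpler split forms by performing row and column operations, then invoke multiplicativity of the Reidemeister torsion under direct sums. Throughout, I would work with the Dieudonn\'e determinant valued in $\K(\G)^\times_{\ab}$, so elementary (shear) matrices contribute trivially.

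For the 4-term complex, since $c\ne 0$ and $a\ne 0$ and $\K(\G)$ is a skew field, both are units. Mimicking the proof of Lemma~\ref{lem:turaev}, I would introduce invertible matrices $P$ and $Q$ obtained as products of shears so that after this change of basis the boundary maps become
\[ C = \bp c\\0\\\vdots\\0\ep, \qquad A = \bp a & 0 & \dots & 0\ep.\]
The complex conditions $A\circ\partial_2=0$ and $\partial_2\circ C=0$ then force the first row and the first column of the middle matrix to vanish (since $a$ and $c$ are units), leaving it in the block-diagonal form $\op{diag}(0,B)$. Since $P$ and $Q$ are elementary, they have trivial Dieudonn\'e determinant and contribute trivially to the torsion.

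Next I would observe that the simplified complex splits as a direct sum of based complexes,
\[ \bigl(0\to \K(\G)\xrightarrow{c}\K(\G)\xrightarrow{0}\K(\G)\xrightarrow{a}\K(\G)\to 0\bigr)\,\oplus\, \bigl(0\to \K(\G)^{n-1}\xrightarrow{B}\K(\G)^{n-1}\to 0\bigr),\]
where the first summand lives in the first coordinate of $\K(\G)^n$ and the second in the remaining $n-1$ coordinates. By multiplicativity of Reidemeister torsion under direct sums it suffices to compute the torsion of each summand. The torsion of the length-one piece $\K(\G)^{n-1}\xrightarrow{B}\K(\G)^{n-1}$ is by definition $\det(B)$; in particular acyclicity is equivalent to $B$ being invertible, consistent with Lemma~\ref{lem:turaev}. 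The torsion of the length-three piece is a direct one-line calculation (or a special case of \cite[Theorem~2.2]{Tu01}) and equals $c^{-1}\cdot a^{-1}$. Multiplying gives the stated formula $\det(B)\cdot c^{-1}\cdot a^{-1}$.

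The second statement is proved identically, except one only needs the column operations coming from $Q$ on the right-hand end; this reduces the middle matrix to $\bp 0\\ B\ep$ and the complex splits into $\bigl(0\to\K(\G)\xrightarrow{a}\K(\G)\to 0\bigr)$ (in the first coordinate) and $\bigl(0\to\K(\G)^{n-1}\xrightarrow{B}\K(\G)^{n-1}\to 0\bigr)$, yielding torsion $\det(B)\cdot a^{-1}$. The only mildly delicate point is bookkeeping with the sign conventions in the definition of Reidemeister torsion; these are routine once one works inside the abelianization $\K(\G)^\times_{\ab}$, and I would simply import the conventions of \cite{Tu01} to avoid reproving them.
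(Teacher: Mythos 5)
Your argument is correct: the change of basis by the unipotent matrices $P$ and $Q$ (which have trivial Dieudonn\'e determinant), the observation that the complex conditions then kill the first row and first column of the middle matrix, and the splitting into a length-three piece of torsion $c^{-1}a^{-1}$ plus the piece $B$ of torsion $\det(B)$ is exactly the standard computation underlying \cite[Theorem~2.2]{Tu01}, which is all the paper itself invokes for this lemma (it gives no independent proof), and it mirrors the manipulations in the proof of Lemma~\ref{lem:turaev}. No gaps; the only care needed is the sign/exponent convention, which you correctly defer to \cite{Tu01} and which is consistent with the paper's later use of the formula.
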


%
\subsection{The relationship to the Cochran-Harvey invariants}
\label{section:relation-to-ch}
In this section we recall the precise relationship of the degree functions of the last section to the original Cochran-Harvey invariants. 

First we recall the rational derived series of a group
$\pi$ introduced by Harvey~\cite[Section~3]{Ha05}. Let $\pi_r^{(0)}:=\pi$ and
define inductively
\[ \pi_r^{(n)}:=\big\{ g\in \pi_r^{(n-1)} | \, g^d \in \big[\pi_r^{(n-1)},\pi_r^{(n-1)}\big] \mbox{ for some }d\in \Z \sm \{0\} \big\}.\]
By \cite[Corollary~3.6]{Ha05} all the quotients $\pi/\pi_r^{(n)}$ are torsion-free elementary amenable groups. Thus for any 3-manifold $N$, any $\phi\in H^1(N;\Q)$ and any $n\geq 1$ the homomorphism  $\pi\to \pi/\pi_r^{(n)}$ is a tfea-admissible homomorphism for $(N,\phi)$.

Given a knot $K$ and $n\in \N_0$ Cochran~\cite{Co04} used the above quotients to introduce an integer invariant $\delta_n(K)$. Furthermore, given a 3-manifold $N$, a primitive class $\phi\in H^1(N;\Z)$ and $n\in \N_0$ Harvey~\cite{Ha05} also used the above quotients to introduce
an invariant $\delta_n(N,\phi)=\delta_n(\phi)$. 

The following two lemmas relate the Cochran--Harvey invariants to the degree invariants introduced in the previous section. Both lemmas are a consequence of Theorem~1.1 and Lemmas~4.3 and~4.4 in \cite{Fr07}.

\begin{lemma}\label{lem:relation-to-c}
Let $K\subset S^3$ be a knot. We denote by $X=S^3\sm \nu K$ the exterior of $K$, we write $\pi=\pi_1(X)$  and we pick a generator $\phi\in H^1(X;\Z)=\hom(\pi,\Z)$. Then for any $n\in \N_0$ we have
\[ \delta_n(K)\,\,=\,\,\deg_{\phi}\tau\big(X,\pi\to\pi/\pi_r^{(n+1)}\big)+\left\{\ba{ll} 1,&\mbox{ if }n=0,\\ 0,&\mbox{ otherwise.}\ea\right.\]
\end{lemma}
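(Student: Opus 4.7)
The plan is to reduce the identity to an explicit computation of $\tau(X,\gamma)$ using a small 2-complex model of the knot exterior and then to invoke the identifications between degrees of noncommutative determinants and Cochran's invariants worked out in \cite{Fr07}. Write $\Gamma = \pi/\pi_r^{(n+1)}$ and $\gamma\colon \pi \to \Gamma$ for the projection; since $\Gamma$ is torsion-free elementary amenable, the Ore field $\K(\Gamma)$ and the degree function $\deg_\phi$ on $\K(\Gamma)^\times_{\ab}$ are available.

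First I would use that $X$ is aspherical and deformation retracts onto a 2-dimensional CW complex with a single 0-cell (for instance one built from a Wirtinger presentation of $\pi$). By Lemma~\ref{lem:chain-complex-n}(2), the complex $C_*(\widetilde{X})\otimes_{\Z[\pi]}\K(\Gamma)$ is chain homotopy equivalent to
\[ 0 \to \K(\Gamma)^{k-1} \xrightarrow{\bp *\\ B\ep} \K(\Gamma)^k \xrightarrow{\bp 1-\gamma(\mu)&*&\cdots&*\ep} \K(\Gamma)\to 0,\]
where $\mu \in \pi$ is a meridian with $\phi(\mu) = 1$ and $B$ is a $(k-1)\times (k-1)$ block of the Fox Jacobian of a presentation of $\pi$, with entries in $\Z[\Gamma]$. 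Assuming the complex is $\K(\Gamma)$-acyclic (otherwise both sides of the identity are $-\infty$, handled separately by observing that the failure of acyclicity corresponds exactly to Cochran's convention $\delta_n(K) = -\infty$), Lemma~\ref{lem:torsion-2-complex} yields
\[ \tau(X,\gamma) \,=\, \det(B)\cdot (1 - \gamma(\mu))^{-1} \,\in\, \K(\Gamma)^\times_{\ab}.\]

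Second, since $\gamma(\mu)$ maps to $t$ under $\phi\colon \Gamma \to \Z$ and $\phi(\mu) = 1$, we have $\deg_\phi(1 - \gamma(\mu)) = 1$, so
\[ \deg_\phi \tau(X,\gamma) \,=\, \deg_\phi \det(B) \,-\, 1. \]
Thus the theorem reduces to showing that $\deg_\phi \det(B)$ equals $\delta_n(K) + 1$ when $n = 0$ and equals $\delta_n(K) + 1$ (equivalently, $\delta_n(K) = \deg_\phi \det(B) - 1$) when $n \geq 1$\textemdash wait, more precisely, the formula in the statement asks that $\delta_0(K) = \deg_\phi\tau + 1 = \deg_\phi \det(B)$, and $\delta_n(K) = \deg_\phi\tau = \deg_\phi \det(B) - 1$ for $n \geq 1$.

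Third, for $n = 0$ we have $\Gamma = \Z$ generated by $\phi(\mu) = t$, and $\det(B)\in \Z[t^{\pm 1}]_{\ab}^\times$ is (up to a unit) the classical Alexander polynomial $\Delta_K(t)$. By definition $\delta_0(K) = \deg \Delta_K(t)$, so $\delta_0(K) = \deg_\phi \det(B) = \deg_\phi \tau(X,\gamma) + 1$, as required. For $n \geq 1$, the identification $\delta_n(K) = \deg_\phi \det(B) - 1$ is the content of \cite[Lemmas~4.3~and~4.4]{Fr07}, which translate Cochran's definition of $\delta_n$ as the rank of a higher-order Alexander module into the degree of a Dieudonn\'e determinant of a Fox Jacobian block, with the meridian factor already removed.

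The main obstacle is bookkeeping rather than substance: one must keep careful track of the normalization conventions, in particular (i) the fact that Cochran's $\delta_0$ includes the degree of the full Alexander polynomial whereas $\tau(X,\phi)$ has the meridian factor $(t-1)$ divided out, and (ii) that for $n\geq 1$ Cochran's definition is already set up so as not to count the meridian, which is what produces the asymmetric shift between $n=0$ and $n\geq 1$ in the final formula. Once these conventions are pinned down, the derivation is straightforward from Lemma~\ref{lem:torsion-2-complex} and the cited lemmas.
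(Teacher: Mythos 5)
Your proposal is correct and takes essentially the same route as the paper: the paper's own "proof" consists entirely of the citation to Theorem~1.1 and Lemmas~4.3 and~4.4 of \cite{Fr07}, and your explicit setup via Lemma~\ref{lem:torsion-2-complex}, the meridian factor $(1-\gamma(\mu))^{-1}$ accounting for the shift, and the identification of $\det(B)$ with $\Delta_K(t)$ for $n=0$ is precisely the content of those cited results.
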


\begin{lemma}\label{lem:relation-to-h}
Let $N$ be a 3-manifold and let $\phi\in H^1(N;\Z)$ be primitive.
We write $\pi=\pi_1(N)$.  Then for any $n\in \N_0$ we have
\[ \delta_n(N,\phi)\,\,=\,\,\deg_{\phi}\tau\big(N,\pi\to\pi/\pi_r^{(n+1)}\big)+\left\{\ba{ll} 1+b_3(N),&\mbox{ if }n=0\mbox{ and }b_1(N)=1,\\ 0,&\mbox{ otherwise.}\ea\right.\]
\end{lemma}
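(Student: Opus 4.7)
The plan is to follow the same strategy as used for Lemma~\ref{lem:relation-to-c}: unwind Harvey's definition of $\delta_n(N,\phi)$ from \cite{Ha05}, compare it to the Reidemeister torsion computed via a convenient chain complex, and carefully collect the unit-factor corrections that distinguish the two normalizations.

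First I would apply Lemma~\ref{lem:chain-complex-n} to the tfea-admissible homomorphism $\gamma_n\colon\pi\to\Gamma_n:=\pi/\pi_r^{(n+1)}$, producing a chain complex of based $\K(\Gamma_n)$-right modules in which the off-diagonal entries of the boundary maps are the image of a middle submatrix $B'$. By Lemma~\ref{lem:torsion-2-complex},
\[ \tau(N,\gamma_n)\,=\,\det(B')\cdot(1-\gamma_n(g))^{-1}\cdot(1-\gamma_n(h))^{-1} \]
in the closed case, with the last factor omitted when $\partial N\neq\varnothing$. Taking $\deg_\phi$ turns this into $\deg_\phi\det(B')$ minus two (or one) integer contributions coming from the units. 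The term $\deg_\phi\det(B')$ is precisely the degree of an order of the first noncommutative Alexander module $H_1(N;\Z[\Gamma_n])$ regarded as a $\Z[\Gamma_n^\phi]$-module, which is what Harvey's invariant $\delta_n(N,\phi)$ is ultimately measuring, modulo a shift coming from $H_0$.

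The remaining bookkeeping is to reconcile Harvey's normalization with the unit factors $1-\gamma_n(g)$ and $1-\gamma_n(h)$ supplied by Lemma~\ref{lem:torsion-2-complex}. When $n\ge 1$, or when $n=0$ but $b_1(N)>1$, Harvey's conventions absorb these unit factors cleanly, so no shift is required and equality $\delta_n(N,\phi)=\deg_\phi\tau(N,\gamma_n)$ holds. When $n=0$ and $b_1(N)=1$, however, $\Gamma_0$ is infinite cyclic and the noncommutative Alexander module is the classical one; here Milnor's identity $\Delta_N(t)=\tau(N,\phi)\cdot(t-1)^{1+b_3(N)}$ produces exactly the announced correction $1+b_3(N)$, the $b_3(N)$-summand accounting for whether or not the top cell is present. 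This is the content of \cite[Lemmas~4.3 and 4.4]{Fr07}, which, combined with the general framework of \cite[Theorem~1.1]{Fr07}, yields the lemma.

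The main obstacle I expect is not any single hard calculation but rather the careful tracking of conventions: Harvey's $\delta_n(N,\phi)$ is defined in terms of the degree of an order ideal of a noncommutative module, while $\deg_\phi\tau(N,\gamma_n)$ is a torsion degree, normalized by unit contributions from $0$- and $3$-cells. The correction $1+b_3(N)$ is exactly the arithmetic gap between these two normalizations, and it arises only in the case where $\Gamma_n$ is cyclic and Poincar\'e duality forces a top-cell contribution, namely $n=0$ with $b_1(N)=1$.
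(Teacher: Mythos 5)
Your proposal is correct and follows essentially the same route as the paper, which simply derives the lemma from Theorem~1.1 and Lemmas~4.3 and~4.4 of \cite{Fr07}; your sketch fills in the content of that citation (chain complex from Lemma~\ref{lem:chain-complex-n}, torsion formula from Lemma~\ref{lem:torsion-2-complex}, and the Milnor--Turaev unit corrections giving $1+b_3(N)$ precisely when $n=0$ and $b_1(N)=1$). The bookkeeping you describe is exactly the right source of the correction term.
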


%
\section{Proof of Theorem~\ref{thm:iso-intro}}\label{section:opentwo}
In this section we are going to prove Theorem~\ref{thm:iso-intro}.
We recall the statement of the theorem from the introduction.
\\

\noindent \textbf{Theorem  \ref{thm:iso-intro}.}\emph{ Let $N$ be a 3--manifold, let $\phi \in H^1(N;\Z)=\hom(\pi_1(N),\Z)$ be primitive and let $\gamma\colon \pi_1(N)\to \G$ be an admissible homomorphism. Assume that  $H_1(N;\Z_{\phi}[\G\doubler)=0$.
Then the following hold.
\bn
\item Every Thurston norm minimizing surface  dual to $\phi$ is connected.
\item For any  Thurston norm minimizing surface $\S$ dual to $\phi$
the inclusion induced maps
\[ \frac{\pi_1(\S)}{\ker(\gamma\colon \pi_1(\S)\to \G)}\xrightarrow{\,\,\i_\pm\,\,}
\frac{\pi_1(N\sm \nu \S)}{\ker(\gamma\colon \pi_1(N\sm \nu \S)\to \G)}  \]
are isomorphisms.
\item If $\G$ is a torsion-free elementary-amenable group, and if 
\[\deg_\phi(\tau(N,\gamma))=\|\phi\|_T,\]
then for any  Thurston norm minimizing surface $\S$ dual to $\phi$
the inclusion induced maps
\[ \frac{\pi_1(\S)}{\ker(\gamma\colon \pi_1(\S)\to \G)^{(1)}}\xrightarrow{\,\,\i_\pm\,\,}
\frac{\pi_1(N\sm \nu \S)}{\ker(\gamma\colon \pi_1(N\sm \nu \S)\to \G)^{(1)}}  \]
are isomorphisms.
\en}

\subsection{Proof of Theorem~\ref{thm:iso-intro} (1)}
Let $N$ be a 3--manifold, let $\phi \in H^1(N;\Z)=\hom(\pi_1(N),\Z)$ be primitive and let $\Sigma$ be a 
Thurston norm minimizing surface dual to $\phi$. Furthermore  let $\gamma\colon \pi_1(N)\to \G$ be an admissible homomorphism such  that  $H_1(N;\Z_{\phi}[\G\doubler)=0$. We write $\Gamma_\phi=\ker\{\phi\colon \Gamma\to \Z\}$. 
By Lemma~\ref{lem:h0123} we have
$H_*(N;\Z_{\phi}[\G\doubler)=0$.
It follows from Lemma~\ref{lem:novikov-zero} that $\tau(N,\phi)$ is monic, in particular non-zero.
Theorem~\ref{thm:iso-intro} (1) now follows from 
Proposition~\ref{prop:surface-connected}.

\subsection{Proof of Theorem~\ref{thm:iso-intro} (2)}
Throughout this section let $N$ be a 3--manifold, let $\phi \in H^1(N;\Z)=\hom(\pi_1(N),\Z)$ be primitive and furthermore let $\gamma\colon \pi_1(N)\to \G$ be an admissible homomorphism such  that  $H_1(N;\Z_{\phi}[\G\doubler)=0$. We write $\Gamma_\phi=\ker\{\phi\colon \Gamma\to \Z\}$. Let $\Sigma$ be any Thurston norm minimizing surface dual to $\phi$. By Theorem~\ref{thm:iso-intro} (1) we know that $\Sigma$ is connected.

Basically the same argument as in the proof of Theorem~\ref{thm:mainthmlow}
shows that 
\[ \i_-\colon \frac{\pi_1(\S)}{\ker(\gamma\colon \pi_1(\S)\to \G)}\,\,\to\,\,
\frac{\pi_1(N\sm \nu \S)}{\ker(\gamma\colon \pi_1(N\sm \nu \S)\to \G)}  \]
is an isomorphism. In the interest of space and readability we leave out the details.
By Lemma~\ref{lem:leftright} we also have 
$H_*(N;\Z_{-\phi}[\G\doubler)=0$. Repeating the same argument above shows that also the map induced by $\iota_+$ is an isomorphism.
This concludes the proof of  Theorem~\ref{thm:iso-intro} (2).

\subsection{Start of the proof of Theorem~\ref{thm:iso-intro} (3)}
Now assume that $\G$ is a torsion-free elementary-amenable group.
Let $\S$ be any connected Thurston norm minimizing surface dual to $\phi$. Denote by $g$ the genus of $\S$.
We pick once and for all a thickening $\S\times [-1,1]\subset N$ which is orientation preserving and as always we denote $\S\times (-1,1)$ by $\nu \S$.
We fix a base point $v$ on $\S\times \{-1\}\subset N$ and use it as a base point for $\S\times \{-1\}, N\sm \S\times (-1,1)$ and for $N$.
We give $\S\times \{1\}$ the corresponding base point which we denote by $v^+$.
We sometimes also refer to $v$ as $v^-$.
We also pick an embedded path $p$ in $N\sm \S\times (-1,1)$ from $v^+$ to $v^-$.
We denote by $t$ the element in $\pi_1(N,v)$ given by closing the above path $p$ by joining $v_-$ and $v_+$ by a constant path in $\S\times (-1,1)$. We give $t$ the orientation such that $\phi(t)=1$.

The choices of base points now allow us to define the twisted chain complexes
$C_*(\S;\zg), C_*(N\sm \nu \S;\zg)$ and $C_*(N;\zg)$.
We get an induced map 
\[ \i_-\colon C_*(\S;\zg)=C_*(\S\times \{-1\};\zg)\to C_*(N\sm \nu \S;\zg)\]
and the choice of the path $p$ allows us to define
an induced map 
\[\i_+\colon C_*(\S;\zg)=C_*(\S\times \{1\};\zg)\to C_*(N\sm \nu \S;\zg).\]
For both maps we refer to \cite[p.~933]{FK06} for details.
We will prove the following lemma.

\begin{lemma} \label{keylemma}
 Assume that  $H_1(N;\Z_{\phi}[\G\doubler)=0$ and $\deg_{\phi}(\tau(N,\gamma))=\|\phi\|_T$.
Then the inclusion induced maps
\[H_1(\S;\zg)\xrightarrow{\i_{\pm}} H_1(N\sm \S\times (-1,1);\zg)\] are isomorphisms.
\end{lemma}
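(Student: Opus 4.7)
The plan is to adapt the strategy of Theorem~\ref{thm:mainthmlow} by shifting one homological degree upward and injecting the degree hypothesis via a torsion computation. Set $Y=N\sm\nu\S$. The decomposition $N=Y\cup_{\S\sqcup\S}(\S\times[-1,1])$, together with the basepoint $v$ and path $p$, gives a Mayer--Vietoris long exact sequence
\[ \cdots\to H_n(\S;R)\xrightarrow{\;\iota_--t\iota_+\;}H_n(Y;R)\to H_n(N;R)\to H_{n-1}(\S;R)\to\cdots\]
for any $\Z[\pi_1(N)]$-module $R$; the $t$-factor records that $\iota_+$ transports basepoints along $p$. Since $\pi_1(\S)$ and $\pi_1(Y)$ both lie in $\ker\phi$, the map $\iota_--t\iota_+$ is represented at the chain level by a matrix of the form $A-tB$ with $A$ and $B$ matrices over $\Z[\G_\phi]$. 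Taken over the Ore skew field $\K(\G)$, well-defined because $\G$ is torsion-free elementary amenable, the hypothesis $\deg_\phi\tau(N,\gamma)=\|\phi\|_T\in\Q$ forces $\tau(N,\gamma)\ne 0$ and hence $H_*(N;\K(\G))=0$, so the sequence gives that $\iota_--t\iota_+$ induces a $\K(\G)$-isomorphism on each $H_n(\cdot;\K(\G))$.

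Next I would match degrees. Using a CW-structure on $N$ adapted to the decomposition, the mapping-cone description of $C_*(N;\Z[\G])$, and the multiplicativity of Reidemeister torsion, one identifies $\tau(N,\gamma)$ up to units of zero $\deg_\phi$ with the Dieudonn\'e determinant of $\iota_--t\iota_+$ acting on $H_1$. An Ore-degree estimate in the spirit of the Thurston-norm lower bounds of \cite{Ha05,Fr07} then gives
\[ \deg_\phi\det(A-tB)\;\leq\;\dim_{\K(\G)}H_1(\S;\K(\G))\;=\;\chi_-(\S)\;=\;\|\phi\|_T,\]
with equality precisely when both $A$ and $B$ are invertible over the skew field $\K(\G_\phi)$. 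The degree hypothesis forces this equality, so in particular $A$ is non-degenerate.

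Combining this with the Novikov--Sikorav hypothesis: by Lemma~\ref{lem:h0123} we have $H_*(N;\Z_\phi[\G\doubler)=0$, and hence the same Mayer--Vietoris sequence run over $\Z_\phi[\G\doubler$ shows that $A-tB$ is invertible over $\Z_\phi[\G\doubler$. Now Lemma~\ref{lem:inv}, applied with leading matrix the non-degenerate $A$, upgrades this to invertibility of $A$ over $\Z[\G_\phi]$, which is to say that $\iota_-$ induces an isomorphism $H_1(\S;\Z[\G])\to H_1(Y;\Z[\G])$. For $\iota_+$, Lemma~\ref{lem:leftright} guarantees the Novikov--Sikorav hypothesis also holds for $-\phi$, and rerunning the argument with $\phi\mapsto -\phi$ (which exchanges the roles of $A$ and $B$, and of $\iota_-$ and $\iota_+$) gives that $\iota_+$ is an isomorphism as well.

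The main obstacle is the torsion identification in the second paragraph: producing a clean formula equating $\tau(N,\gamma)$ up to a controllable unit with $\det(\iota_--t\iota_+)$ requires careful bookkeeping of the contributions of $H_0(\S),\ H_2(\S)$ and $H_0(Y)$ together with the connecting maps in the mapping-cone long exact sequence (they vanish over $\K(\G)$ under our hypotheses but still contribute torsion units of potentially nonzero $\deg_\phi$), and the subsequent bridging from invertibility of $A-tB$ over $\Z_\phi[\G\doubler$ down to invertibility of $A$ over $\Z[\G_\phi]$ is exactly where Lemma~\ref{lem:inv} and the non-degeneracy extracted from the degree equality are indispensable.
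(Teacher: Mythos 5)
Your overall strategy is the paper's: split $N$ along $\S$, encode the splitting as a map of the form ``$\iota_--t\iota_+$'', compute $\tau(N,\gamma)$ as a Dieudonn\'e determinant, use a degree lemma to get invertibility of the ``constant'' and ``leading'' blocks over $\K(\G_\phi)$, feed that non-degeneracy into Lemma~\ref{lem:inv} together with the Novikov--Sikorav vanishing to descend to invertibility over $\Z[\G_\phi]$, and handle $\iota_+$ via Lemma~\ref{lem:leftright} and the sign flip $\phi\mapsto-\phi$. However, there is a genuine gap, and it sits exactly where you park the ``main obstacle'': your central objects are not defined. The modules $H_1(\S;\zg)$ and $H_1(N\sm\nu\S;\zg)$ are not free over $\Z[\G]$ (for $\S$ a wedge-of-circles retract, $H_1(\S;\zg)$ is the kernel of $(1-\gamma(e_i))\colon\oplus_i\Z[\G]\to\Z[\G]$), so there is no matrix $A-tB$ ``representing $\iota_--t\iota_+$ on $H_1$'', no Dieudonn\'e determinant of it, and Lemma~\ref{lem:inv} --- which is a statement about honest square matrices over $\zphig$ --- cannot be applied to an abstract isomorphism of non-free modules. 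The same problem infects your degree bound: the identification $\deg_\phi\det(A-tB)=\dim_{\K(\G)}H_1(\S;\K(\G))=\chi_-(\S)$ is off even numerically in the model case ($H_0(\S;\K(\G))$ need not vanish, and in the paper's computation the determinant of the relevant block has degree $2g$, not $2g-1=\|\phi\|_T$; the discrepancy is absorbed by the factor $(1-\gamma(t))^{-1}$). These are not bookkeeping issues to be deferred; resolving them \emph{is} the proof.

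The paper's fix is to work at the chain level throughout: choose CW structures so that $Y=N\sm\nu\S$ has $2$-cells whose boundary matrix splits into blocks $R^-,R^+,R'$ over $\Z[\G_\phi]$, compute $\tau(N,\gamma)=\det\left(\begin{smallmatrix}R^-+\gamma(t)R^+\\ R'\end{smallmatrix}\right)(1-\gamma(t))^{-1}$ directly from Lemma~\ref{lem:torsion-2-complex}, run the degree lemma (Lemma~\ref{lem:dim}) and Lemma~\ref{lem:inv} on these genuine matrices, and only at the very end extract the statement about $H_1$ by a diagram chase. That last step also reveals a role reversal your homology-level heuristic hides: $\iota_-$ is an isomorphism on $H_1$ because $\left(\begin{smallmatrix}R^+\\ R'\end{smallmatrix}\right)$ (the block attached to the \emph{other} inclusion) is invertible over $\Z[\G_\phi]$ --- invertibility of that block lets one cancel the $e_i^+$ and $e_i'$ generators of $C_1(Y)$ against the $2$-cells, leaving the $e_i^-$ free --- whereas your write-up pairs ``$A$ invertible'' with ``$\iota_-$ is an isomorphism''. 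So while the skeleton is right, the argument as written cannot be executed without being recast in the paper's chain-level form.
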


The proof of Lemma~\ref{keylemma} will be given in the following three sections.
Before we turn to the proof of Lemma~\ref{keylemma} we  show how Theorem~\ref{thm:iso-intro} (3) follows from Lemma~\ref{keylemma}.
We write
\[ \ba{rcl}
A_{\pm}&=&\im\{\pi_1(\S)\xrightarrow{\i_\pm}\pi_1(N\sm \nu \S)\xrightarrow{\gamma}\G\} \\
B&=&\im\{\pi_1(N\sm \nu \S)\xrightarrow{\gamma}\G \}. \ea \]
We have a commutative diagram
\[ \xymatrix{ H_1(\S;\zg)\ar[r]^{\i_{\pm}}\ar[d]^{\cong} & H_1(N\sm\nu \S;\zg)\ar[d]^{\cong} \\
{\displaystyle \frac{\ker(\pi_1(\S)\to \G)}{\ker(\pi_1(\S)\to \G)^{(1)}}}\otimes \Z[\G/A_\pm]   \ar[r]^-{\i_\pm}&
{\displaystyle\frac{\ker(\pi_1(N\sm \nu \S)\to \G)}{\ker(\pi_1(N\sm \nu \S)\to \G)^{(1)}}}\otimes \Z[\G/B].  }\]
By Theorem~\ref{thm:iso-intro} (2) we know that the inclusion induced maps $\G/A_\pm \to \G/B$ are isomorphisms. It follows from Lemma~\ref{keylemma}
that the inclusion induced maps
\[ \frac{\ker(\pi_1(\S)\to \G)}{\ker(\pi_1(\S)\to \G)^{(1)}}\xrightarrow{\,\,\i_\pm\,\,}
\frac{\ker(\pi_1(N\sm \nu \S)\to \G)}{\ker(\pi_1(N\sm \nu \S)\to \G)^{(1)}}\]
are isomorphisms.
Now consider the following commutative diagram
\[ \xymatrix@C0.6cm{
1\ar[r]&\smfrac{\ker(\pi_1(\S)\to \G)}{\ker(\pi_1(\S)\to \G)^{(1)}} \ar[r]\ar[d]^{\i_\pm}&
\smfrac{\pi_1(\S)}{\ker(\pi_1(\S)\to \G)^{(1)}}  \ar[r]\ar[d]^{\i_\pm}&
\smfrac{\pi_1(\S)}{\ker(\pi_1(\S)\to \G)} \ar[d]^{\i_\pm}\ar[r]&1\\
1\ar[r]&\smfrac{\ker(\pi_1(N\sm \nu \S)\to \G)}{\ker(\pi_1(N\sm \nu \S)\to \G)^{(1)}} \ar[r]&
\smfrac{\pi_1(N\sm \nu \S)}{\ker(\pi_1(N\sm \nu \S)\to \G)^{(1)}}  \ar[r]&
\smfrac{\pi_1(N\sm \nu \S)}{\ker(\pi_1(N \sm \nu \S)\to \G)}\ar[r]&1.}
 \]
As we just pointed, the left vertical map is an isomorphism. The right vertical map is an isomorphism by Theorem~\ref{thm:iso-intro} (2), hence the middle map is an isomorphism as well.
This concludes the proof of Theorem~\ref{thm:iso-intro} (3) modulo the proof of Lemma~\ref{keylemma}.

\subsection{A chain complex calculating $\tau(N,\gamma)$}
Now we turn to the proof of Lemma~\ref{keylemma}.
We will only consider the case that $N$ has non-empty boundary.
The closed case is proved in a very similar fashion.

We retract $\S$ onto a one-dimensional CW-complex $S$ with one
0-cell $v$ and $2g$ 1--cells $e_1,\dots,e_{2g}$.
Furthermore we can retract $N\sm \nu \S$ to a two-dimensional CW-complex $Y$ 
with the following properties:
\bn
\item $Y$ contains $S\times \{-1\}$ and $S\times \{1\}$ as subcomplexes.
Henceforth, given a cell $c$ of $S$ we denote the corresponding cells of $S\times \{\pm 1\}$ by $c^\pm$,
\item $Y$ contains  $4g+1+r$ cells of dimension 1:  the $4g$ cells $e_1^-,\dots,e_{2g}^-$,$e_1^+,\dots,e_{2g}^+$,
one cell $t$ which connects $v^-$ and $v^+$ and corresponds to the chosen path connecting the base points, and finally
another $r$ cells $e_1',\dots,e_r'$ (where we can assume that the end point is $v^-$),
\item by an Euler characteristic argument $Y$ has $2g+r$ cells of dimension 2 which we denote by $f_1,\dots,f_{2g+r}$.
\en
Then we get a CW--complex $X$, that is homotopy equivalent to  $N$, by identifying $v^-=v^+$ and $e_i^-=e_i^+, i=1,\dots,2g$.
We get a Mayer--Vietoris type short exact sequence of chain complexes
\[ 0\to C_*(S;\Z[\Gamma])\xrightarrow{\i_--t\i_+} C_*(Y;\Z[\Gamma])\to C_*(X;\Z[\Gamma])\to 0,\]
we refer to \cite[Section~3]{FK06} for details.

The above short exact sequence of chain complexes translates into the
following commutative diagram:
\[ \xymatrix@R0.43cm{
0\ar[rr]\ar[ddd]&&
\oplus f_i\Z[\Gamma]  \ar[ddd]^-{{\tiny \bp R^- \\ R^+\\ R'\\ 0\ep}} \ar[rrrr]&&&&
\oplus f_i\Z[\Gamma]  \ar[ddd] \\
&&&&&&\\
&&&&&&\\
\oplus e_i\Z[\Gamma] \ar[rr]^-{{\tiny \bp \id \\ -\gamma(t)\cdot \id \\ 0\\0\ep}}\ar[dd]^{{\tiny \bp 1-\gamma(e_i)\ep}}&&
{\ba{r} \oplus e_i^-\Z[\Gamma]  \\ \oplus e_i^+\Z[\Gamma]
\\ \oplus e_i'\Z[\Gamma] \\ p \Z[\Gamma] \ea }
\ar[rrrr]^{\tiny \bp \gamma(t)\cdot \id &\id &0&0\\ 0&0&\id&0 \\ 0&0&0&\id \ep} \ar[dd]^-{\tiny \bp 1-\gamma(e_i)&0&0&\hfill 1 \\ 0&1-\gamma(e_i)&0&-1 \ep}&&&&
{\ba{r} \oplus e_i \Z[\Gamma]  \\ \oplus e_i'\Z[\Gamma] \\ p \Z[\Gamma] \ea }
 \ar[dd]^{\tiny \bp 1-\gamma(e_i)&0&1-\gamma(t)\ep } \\
&&&&&\\
 v\Z[\Gamma]\ar[rr]_{\tiny \bp 1\\ -\gamma(t)\ep}&&
{ \ba{r}v^- \Z[\Gamma]\\ v^+\Z[\Gamma] \ea }  \ar[rrrr]_{\tiny \bp \gamma(t)&1\ep}&&&&
v \Z[\Gamma].
}
\]
Here we omitted the $0$'s  to the left and right to save space. Also, by a slight abuse of notation we denote by $e_i\in \pi_1(S,p)$ the element represented by the one-cell $e_i$. 

Since $S$ is dual to $\phi$ it follows that $\phi$ also vanishes on $\pi_1(Y)$.
This implies in particular that the matrices $R^-,R^+,R'$ are defined over $\Z[\gphi]$.

\subsection{The matrices $R^-,R^+$ and $R'$}

In this section we will prove the following two lemmas.

\begin{lemma} \label{lem:rnondeg}
If $\deg_\phi(\tau(X,\g))=\|\phi\|_T=2g-1$, then $\bp R^-\\ R'\ep $ and $\bp R^+\\ R'\ep $ are invertible over $\K(\gphi)$.
\end{lemma}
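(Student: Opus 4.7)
The plan is to compute $\tau(N,\gamma)$ from the chain complex $C_*(X;\Z[\G])$ constructed above, express it in terms of the Dieudonn\'e determinant of
\[
M\,:=\,\bp \gamma(t)R^- + R^+ \\ R' \ep,
\]
and extract the invertibility of both $\bp R^-\\ R'\ep$ and $\bp R^+\\ R'\ep$ from the equality $\deg_\phi(\tau(N,\gamma))=2g-1$. First, the hypothesis $H_1(N;\Z_\phi[\G\doubler)=0$ together with Lemma~\ref{lem:h0123} gives $H_*(N;\Z_\phi[\G\doubler)=0$; since injectivity over $\Z[\G]$ of the relevant square boundary matrix passes to invertibility over the Ore field of fractions $\K(\G)$, we also obtain $H_*(N;\K(\G))=0$ and so $\tau(N,\gamma)\in \K(\G)^\times_{\ab}$ is nonzero. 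After reordering the basis of $C_1(X;\Z[\G])$ so that the $p$-cell comes first, Lemma~\ref{lem:torsion-2-complex} applies with leading $\partial_1$-entry $a=1-\gamma(t)$ and $\partial_2$ lower block $B=M$, yielding
\[
\tau(N,\gamma)\,=\,\det(M)\cdot (1-\gamma(t))^{-1}.
\]
Since $\deg_\phi(1-\gamma(t))=1$, the hypothesis $\deg_\phi(\tau(N,\gamma))=2g-1$ becomes $\deg_\phi(\det M)=2g$.

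To translate this identity into the desired invertibility, pick $s:=\gamma(t)$ (so $\phi(s)=1$) and let $\alpha$ denote conjugation by $s$ on $\Z[\gphi]$. The crossed product structure $\Z[\G]=\Z[\gphi]*_\alpha\langle s\rangle$ identifies $\K(\G)$ with the Ore field of fractions of the skew Laurent polynomial ring $\K(\gphi)[s^{\pm 1};\alpha]$. Inside this ring we have
\[
M\,=\,s\bp R^-\\ 0\ep + \bp R^+\\ R'\ep,
\]
so the top $2g$ rows of $M$ have $s$-degree at most $1$ while the bottom $r$ rows have $s$-degree $0$. The standard row-degree analysis for matrices over skew polynomial rings then yields the bounds
\[
\max\nolimits_s(\det M)\le 2g \qquad\text{and}\qquad \min\nolimits_s(\det M)\ge 0,
\]
with equality in the upper (resp.\ lower) bound if and only if the leading row matrix $\bp R^-\\ R'\ep$ (resp.\ trailing row matrix $\bp R^+\\ R'\ep$) is invertible over $\K(\gphi)$. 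Since $\deg_\phi(\det M)$ is exactly the difference $\max_s(\det M)-\min_s(\det M)$, the equality $\deg_\phi(\det M)=2g$ forces both extremes to be attained, so both block matrices are invertible over $\K(\gphi)$.

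The main technical ingredient is the skew row-degree lemma used in the previous paragraph. In the commutative case it is an immediate consequence of multilinear expansion of $\det M$ in its first $2g$ rows, which gives
\[
\det M \,=\, \smsum{S\subseteq\{1,\dots,2g\}}{} s^{|S|}\,\det\!\bp R^{\eps(S)} \\ R' \ep,
\]
where $R^{\eps(S)}$ has the $i$-th row of $R^-$ for $i\in S$ and the $i$-th row of $R^+$ otherwise; the coefficients of $s^{2g}$ and $s^0$ are then visibly $\det\bp R^-\\ R'\ep$ and $\det\bp R^+\\ R'\ep$ respectively. The corresponding statement for Dieudonn\'e determinants in the noncommutative setting follows from the standard theory of row-reduced normal forms over skew polynomial rings, but verifying it carefully in the present formalism is the main obstacle of the argument.
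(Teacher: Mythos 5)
Your overall strategy matches the paper's: compute $\tau(N,\gamma)=\det\bp R^-+\gamma(t)R^+\\ R'\ep\cdot(1-\gamma(t))^{-1}$ from the Mayer--Vietoris chain complex via Lemma~\ref{lem:torsion-2-complex}, deduce $\deg_\phi(\det M)=2g$ from the hypothesis, and then conclude invertibility of the two coefficient blocks by a degree argument over the skew field $\K=\K(\gphi)$. (Two small remarks: you invoke $H_1(N;\Z_\phi[\G\doubler)=0$, which is not a hypothesis of this lemma and is not needed -- $\deg_\phi(\tau)=2g-1\ne-\infty$ already forces $\tau\ne 0$; and your $M$ has $\gamma(t)$ on $R^-$ rather than $R^+$, which is harmless since the conclusion is symmetric.)

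The genuine gap is the final step. The statement that $\deg_\phi(\det M)=2g$ forces both $\bp R^-\\ R'\ep$ and $\bp R^+\\ R'\ep$ to be invertible over $\K$ is precisely the paper's Lemma~\ref{lem:dim}, and you assert it rather than prove it. Your justification in the commutative case -- multilinear expansion of the determinant in its first $2g$ rows -- does not carry over: the Dieudonn\'e determinant takes values in $\K(\G)^\times_{\ab}\cup\{0\}$ and admits no such row expansion, and even the decomposition $\deg_\phi(\det M)=\max_s-\min_s$ for "row degrees" of a Dieudonn\'e determinant is not a priori meaningful without setting up the identification $\K(\G)\cong\K(t)$ and proving the relevant degree bounds. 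You explicitly flag this as "the main obstacle," which is an accurate self-assessment: it is the mathematical content of the lemma. The paper closes this gap concretely: first one column-reduces using that $C=R'$ must have full rank $l$ (else the determinant vanishes), reducing to a square pencil $A'+tB'$ with $\deg\det(A'+tB')=k$; then, if $A'$ were singular, row and column operations over $\K$ produce a block form $\bp tb&v\\ 0&A''+tB''\ep$ with $\deg(tb)=0$ in the relevant sense, so the degree would be bounded by $\deg\det(A''+tB'')\le k-1$ by Harvey's bound \cite[Theorem~9.1]{Ha05}, a contradiction. To complete your proof you would need to supply this (or an equivalent) argument for the noncommutative setting.
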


\begin{lemma} \label{lem:rinv}
If $H_1(X;\zphig)=0$ and $\deg_\phi(\tau(X,\g))=\|\phi\|_T=2g-1$,
then $\bp R^-\\ R'\ep $ and $\bp R^+\\ R'\ep $ are invertible over $\Z[\gphi]$.
\end{lemma}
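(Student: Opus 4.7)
By Lemma~\ref{lem:leftright}, the hypothesis $H_1(X;\zphig)=0$ also yields $H_1(X;\Z_{-\phi}[\G\doubler)=0$, and Lemma~\ref{lem:h0123} promotes both to vanishing in every degree. Composing the rightmost horizontal map of the large commutative diagram with the $Y$-column boundary $\partial_2$ yields the boundary operators of $C_*(X;\Z[\G])$,
\[
\partial_2 \,=\, \bp \gamma(t)R^- + R^+ \\ R' \\ 0 \ep,\qquad
\partial_1 \,=\, \bp 1-\gamma(e_i) & 0 & 1-\gamma(t) \ep.
\]
Because $\phi(t)=1$, Lemma~\ref{lem:inv} shows that $1-\gamma(t)$ is a unit in both $\zphig$ and $\Z_{-\phi}[\G\doubler$. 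Permuting the degree-$1$ basis so that $p$ appears first and applying the length-two analogue of Lemma~\ref{lem:turaev} (legitimate since both Novikov completions are stably finite by Proposition~\ref{prop:stably-finite}), the vanishing of these homologies forces the square block
\[
M \,:=\, \bp \gamma(t)R^- + R^+ \\ R' \ep
\]
to be invertible over $\zphig$ and over $\Z_{-\phi}[\G\doubler$.

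To prise apart the two summands hidden inside $M$, work first over $\zphig$ and decompose $M = \bp R^+ \\ R' \ep + \bp \gamma(t)R^- \\ 0 \ep$. The second summand has entries in $\Z_\phi^{>0}[\G\doubler$ since $\phi(t)>0$ and $R^-$ is defined over $\Z[\gphi]$. By Lemma~\ref{lem:rnondeg}, $\bp R^+ \\ R' \ep$ is invertible over $\K(\gphi)$, hence non-degenerate over $\Z[\gphi]$, so Lemma~\ref{lem:inv} applied with $g=e$ concludes that $\bp R^+ \\ R' \ep$ is invertible over $\Z[\gphi]$.

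For the companion matrix $\bp R^- \\ R' \ep$, factor first $M = \bp \gamma(t)\id & 0 \\ 0 & \id \ep \bp R^- + \gamma(t)^{-1}R^+ \\ R' \ep$, so the right factor is invertible over $\Z_{-\phi}[\G\doubler$. Since $\gamma(t^{-1})$ has strictly positive $-\phi$-valuation, the summand $\bp \gamma(t)^{-1}R^+ \\ 0 \ep$ lies in $\Z_{-\phi}^{>0}[\G\doubler$, and applying Lemma~\ref{lem:inv} together with Lemma~\ref{lem:rnondeg} once more delivers $\bp R^- \\ R' \ep$ invertible over $\Z[\gphi]$. The main obstacle will be the careful extraction of $\partial_2$ on $C_*(X;\Z[\G])$ from the diagram and the parallel bookkeeping of the two valuations $\pm\phi$; the conceptual key is that although the Mayer--Vietoris gluing element $\gamma(t)$ packages $R^+$ and $R^-$ inseparably inside $M$, Lemma~\ref{lem:leftright} lets us run the argument twice, once in each direction, and this disentangles them.
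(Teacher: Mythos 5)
Your argument is correct and follows essentially the same route as the paper: deduce invertibility of the combined block over both Novikov completions $\zphig$ and $\Z_{-\phi}[\G\doubler$ from acyclicity (via the length-two analogue of Lemma~\ref{lem:turaev} and Lemma~\ref{lem:leftright}), and then peel off $\bp R^+\\ R'\ep$ and $\bp R^-\\ R'\ep$ separately by applying Lemma~\ref{lem:inv} in each of the two valuations, with Lemma~\ref{lem:rnondeg} supplying the required non-degeneracy. The only (immaterial) discrepancy is that you read the composed boundary off the diagram as $\gamma(t)R^-+R^+$ while the paper writes $R^-+\gamma(t)R^+$, which merely swaps which of the two completions extracts which matrix.
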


The proofs will occupy the remainder of this section.
First note that from the above commutative diagram it follows that the chain complex  $C_*(X;\K(\Gamma))$ is given by
\[
0\to
\oplus f_i\K(\Gamma)  \xrightarrow{\tiny \bp R^-+\gamma(t)R^+ \\ R'\\ 0\ep}
{\ba{r} \oplus e_i\K(\Gamma)  \\ \oplus e_i'\K(\Gamma) \\ p \K(\Gamma) \ea }
\xrightarrow{\tiny {\bp 1-\gamma(e_i)&0&1-\gamma(t) \ep}}
v\K(\Gamma) \to 0.\]
It follows from Lemma~\ref{lem:torsion-2-complex} that
\[ \tau(N,\g)=\det\bp R^-+\gamma(t)R^+\\ R'\ep \cdot (1-\gamma(t))^{-1}\in \K(\G)^\times_{\ab}\cup \{0\}.\]
Clearly $\deg_{\phi}(1-\gamma(t))=1$. It thus follows from our assumption $\deg_\phi(\tau(N,\g))=2g-1$ that 
\[ \deg_{\phi} \bigg(\det\bp R^-+\gamma(t)R^+\\ R'\ep\bigg)=2g.\]

Now Lemma~\ref{lem:rnondeg}  follows from a purely algebraic lemma.
To formulate the next lemma it is helpful to write $\K=\K(\G_\phi)$. As in~\cite{Fr07}  we can then identify $\K(\G)$ with a skew field of rational functions $\K(t)$ over $\K$. The function $\deg_\phi$ then translates into the usual notion of degree over $\K(t)$.
Now we are ready to state the  lemma that concludes
the proof of Lemma~\ref{lem:rnondeg}.

\begin{lemma} \label{lem:dim}
Let $A,B$ be two $k\times (k+l)$--matrices and $C$ an $l\times (k+l)$ matrix over the skew field $\K$. If
\[ \deg\,\det \bp A+tB\\ C\ep=k \]
then $\bp A\\ C\ep$ and $\bp B\\ C\ep$ are invertible over $\K$.
\end{lemma}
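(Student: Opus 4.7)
The strategy is to exploit the identification of $\K(\G)$ with the skew rational function field $\K(t)$ (where $\K = \K(\G_\phi)$), under which $\deg_\phi$ becomes the standard Laurent ``length'' function $\deg_\phi = \deg - v_0$. Here $\deg$ is the usual (top) degree and $v_0$ is the order of vanishing at $t = 0$; both are well-defined monoid homomorphisms on $\K[t]\sm\{0\}$ because $t$ is central, and both extend to homomorphisms $\K(t)^\times \to \Z$ that descend to $\K(t)^\times_{\ab}$ since the target $\Z$ is abelian. Writing $M(t) := \bp A+tB \\ C \ep$, the lemma will follow from two sharp inequalities: (i) $v_0(\det M(t)) \geq 0$ with equality if and only if $\bp A\\ C\ep$ is invertible over $\K$, and (ii) $\deg(\det M(t)) \leq k$ with equality if and only if $\bp B\\ C\ep$ is invertible over $\K$. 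Granting these, the hypothesis $\deg_\phi(\det M(t)) = k$ forces equality in both, which is the conclusion of the lemma.

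The universal lower bound in (i) is an application of Smith normal form over the left-and-right PID $\K[t]$: any $X \in M_{k+l}(\K[t])$ invertible over $\K(t)$ can be written $X = PDQ$ with $P, Q \in \op{GL}_{k+l}(\K[t])$ and $D$ diagonal with non-zero entries in $\K[t]$, and since the units of $\K[t]$ are just $\K^\times$, every factor contributes non-negatively to $v_0(\det X)$. For the sharpness of (i), set $M_0 = \bp A\\ C\ep$ and $M_1 = \bp B\\ 0\ep$. If $M_0$ is invertible over $\K$, factor $M(t) = M_0(I + t M_0^{-1} M_1)$; the second factor lies in $\op{GL}_{k+l}(\K[[t]])$ (its inverse is a geometric series), so its Dieudonn\'e determinant is a unit of $\K[[t]]^\times_{\ab}$, has $v_0 = 0$, and hence $v_0(\det M(t)) = v_0(\det M_0) = 0$. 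Conversely, if $M_0$ is singular, pick $P \in \op{GL}_{k+l}(\K)$ with the last row of $PM_0$ equal to zero; then the last row of $PM(t) = PM_0 + tPM_1$ is divisible by $t$, and left-multiplying by $\diag(1,\dots,1,t^{-1})$ yields a matrix $N(t) \in M_{k+l}(\K[t])$ satisfying $\det M(t) = t\cdot(\det P)^{-1}\cdot\det N(t)$, forcing $v_0(\det M(t)) \geq 1$.

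Inequality (ii) follows from (i) by the symmetry $s = t^{-1}$: left-multiplying $M(t)$ by $\diag(t^{-1},\dots,t^{-1},1,\dots,1)$ ($k$ copies of $t^{-1}$) yields
\[ \widetilde{M}(s) \,=\, \bp sA+B \\ C \ep \,\in\, M_{k+l}(\K[s]),\]
with $\widetilde{M}(0) = \bp B\\ C\ep$ and $\det\widetilde{M}(s) = t^{-k}\det M(t)$. Applying (i) to $\widetilde{M}$ in the variable $s$ and using $v_{s=0} = -\deg_t$ on $\K(t)^\times_{\ab}$ translates this directly into (ii). The main obstacle here is purely the skew-field bookkeeping: the Dieudonn\'e determinant lands only in $\K(t)^\times_{\ab}$, so there is no literal ``constant term'' or ``leading coefficient'' to read off, and everything must be phrased through valuations and Smith normal form, keeping in mind that $\deg$ and $v_0$ descend to the abelianization only because their target $\Z$ is abelian.
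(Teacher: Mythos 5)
Your proof is correct, but it takes a genuinely different route from the paper's. The paper first uses the hypothesis to see that $C$ has rank $l$, performs column operations over $\K$ to split off an invertible $l\times l$ block and reduce to a square pencil $A'+tB'$ with $\deg\det(A'+tB')=k$, and then argues by contradiction: if $A'$ were singular, a further row and column reduction produces a $(k-1)\times(k-1)$ pencil whose determinant has the same degree, contradicting the bound $\deg\det(A''+tB'')\le k-1$ imported from Harvey's Theorem~9.1. You instead work valuation-theoretically, splitting $\deg_\phi=\deg_t-v_0$ and proving the two one-sided bounds $v_0(\det M)\ge 0$ and $\deg_t(\det M)\le k$ together with their equality cases, via diagonal reduction over $\K[t]$ and the factorization $M=M_0(I+tM_0^{-1}M_1)$ through $\op{GL}_{k+l}(\K[[t]])$, the second bound being the first read in the variable $s=t^{-1}$. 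What your approach buys is self-containedness (no appeal to Harvey's degree estimate) and a symmetric treatment of $A$ and $B$; what the paper's buys is that it stays entirely within elementary row and column operations over $\K$ and $\K(t)$ and never needs power series or Smith normal form. One justification you give is off: in the paper's setting $\K(\G)$ is identified with a \emph{twisted} rational function field over $\K=\K(\G_\phi)$, so $t$ is in general not central. This does not damage the argument --- $\deg_t$ and $v_0$ are still monoid homomorphisms on $\K[t;\alpha]\sm\{0\}$ because the leading (resp.\ lowest) coefficient of a product is $a\,\alpha^{m}(b)\ne 0$ over a skew field, and the twisted polynomial ring is still a left and right PID admitting diagonal reduction --- but the phrase ``because $t$ is central'' should be replaced by this observation, and the same care is needed whenever you move the scalar $t$ past matrices over $\K$ (e.g.\ $P(tM_1)=t\,\alpha^{-1}(P)M_1$), which again changes nothing essential.
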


\begin{proof}
First we show that $C$ has rank $l$.
If not, then we can do row operations over $\K$  such that the bottom row of $\bp A+tB\\ C\ep$ becomes zero. But this implies that  $\bp A+tB\\ C\ep$ is not invertible over $\K(t)$. This contradicts the assumption that the degree of the determinant is finite.

Since $C$ has rank $l$ we can do column operations over $\K$ to turn $\bp A+tB\\ C\ep $ into a matrix of the form
\[  \bp A'+tB'&D'+tE' \\  0&C'\ep \]
for $k\times k$--matrices $A',B'$ over $\K$, $k\times l$--matrices $D',E'$ over $\K$ and an invertible $l\times l$--matrix $C'$ over $\K$.

Since $\deg(\det (C'))=0$  we have
\[ \deg\big(\det(A'+tB')\big)\,=\,\deg \det\bp A+tB\\ C\ep=k.\]
Since we did column operations over $\K$ it now suffices to show that $A'$ and $B'$ are invertible over $\K$.
Assume that the matrix $ A'$ is not invertible over $\K$. This implies that we can find an invertible $k\times k$--matrix $P$ over $\K$ such
that the first column of $A' P$ is zero.  Since the first column of $A'P$ is zero we can find an invertible $l\times l$--matrix $Q$ over $\K$ such that
\[ Q (A'+tB')P =\bp tb & v \\ 0&A''+tB''  \ep \]
where $b\in \K$ and $A'',B''$ are $(k-1)\times (k-1)$--matrices.
Since $\deg(tb)=0$  we have
\[ \deg\big( \det\bp A'+tB'\ep \big)\,=\,\deg \big(\det\bp A''+tB''\ep\big), \]
but by  \cite[Theorem~9.1]{Ha05} the latter is at most $k-1$. This contradiction shows that $A'$ is invertible over $\K$.

The proof that $B'$ is invertible over $\K$ is now virtually identical to the above proof.
\end{proof}

Now we turn to the proof of Lemma~\ref{lem:rinv}.
Henceforth we assume that we also have $H_1(X;\zphig)=0$.
By Lemma~\ref{lem:h0123}  we also have $H_*(X;\zphig)=0$ for all $i$.
We  consider $C_*(X;\zphig)$:
\[
0\to
\oplus f_i\zphig  \xrightarrow{\tiny \bp R^-+\gamma(t)R^+ \\ R'\\ 0\ep}
{\ba{r} \oplus e_i\zphig  \\ \oplus e_i'\zphig \\ p \zphig \ea }
\xrightarrow{\tiny {\bp 1-\gamma(e_i)&0&1-\gamma(t) \ep}}
v\zphig \to 0.\]
Since $1-\gamma(t)$ is invertible over $\zphig$ it follows from the obvious analogue of Lemma~\ref{lem:turaev} for chain complexes of length two and the fact that $C_*(X;\zphig)$ is acyclic, that 
\[ \bp R^-+\gamma(t)R^+ \\ R' \ep \] is invertible over $\zphig$.
By Lemma~\ref{lem:rnondeg} we already know that 
 ${\tiny \bp R^-\\ R'\ep}$  is invertible over $\K(\G_\phi)$, in particular the matrix is non--degenerate. It 
follows from Lemma~\ref{lem:inv} that $\tiny \bp R^-+\gamma(t)R^+ \\ R'\ep$ is invertible over $\zphig$ only if $\tiny \bp R^-\\R'\ep$ is invertible over $\zgphi$.

By Lemma~\ref{lem:leftright}
our assumption
$H_1(X;\zphig)=0$ implies that $H_1(X;\Z_{-\phi}[\Gamma\doubler)=0$. As above, 
 $C_*(X;\Z_{-\phi}[\Gamma\doubler)$ is acyclic which implies that 
$\tiny \bp R^-+\gamma(t)R^+ \\ R' \ep$ is invertible over $\Z_{-\phi}[\Gamma\doubler$.
But $ \tiny \bp R^-+\gamma(t)R^+ \\ R' \ep $ is invertible over $\Z_{-\phi}[\Gamma\doubler$  only if
$ \tiny \bp \g(t)^{-1}R^-+R^+ \\ R' \ep $ is invertible over $\Z_{-\phi}[\Gamma\doubler$.
By Lemma~\ref{lem:rnondeg}  we know that  $\tiny \bp R^+\\ R'\ep$ is non--degenerate. It
follows again from Lemma~\ref{lem:inv} that $\tiny \bp \g(t)^{-1}R^-+R^+ \\ R'\ep$ is invertible over $\Z_{-\phi}[\Gamma\doubler$ only if $\tiny \bp R^+\\R'\ep$ is invertible over $\zgphi$.
This concludes the proof of Lemma~\ref{lem:rinv}.

\subsection{Conclusion of the proof Lemma~\ref{keylemma}}

Now assume that  $H_1(X;\zphigright)=0$ (and hence $H_1(X;\Z_{-\phi}[\Gamma\doubler)=0$ by Lemma~\ref{lem:leftright}) and $\deg_\phi(\tau(X,\g))=\|\phi\|_T$.

Recall that we have to show that the inclusion induced maps
\[\i_\pm\colon H_1(\Sigma;\zg)\to H_1(N\sm \Sigma \times (-1,1);\zg)\]
 are isomorphisms.
Equivalently, we have to show that the inclusion induced maps
 \[\i_\pm\colon H_1(S;\zg)\to H_1(Y;\zg)\]
  are both isomorphisms.
First we will  show that $H_1(X;\Z_{-\phi}[\Gamma\doubler)=0$ together with $\deg_\phi(\tau(X;\K(\G)))=\|\phi\|_T$
implies that $\i_-\colon H_1(S;\zg)\to H_1(Y;\zg)$ is an isomorphism.

Note that $C_*(S;\zg)\xrightarrow{\i_-}C_*(Y;\zg)$ is given by
\[ \xymatrix{
0\ar[rrr]\ar[dd]&&&
\oplus f_i\Z[\Gamma]  \ar[dd]^-{\tiny \bp R^- \\ R^+\\ R'\\ 0\ep}\\
&&&\\
\oplus e_i\Z[\Gamma] \ar[rrr]^{\tiny \bp \id \\ 0 \\ 0\\0\ep}\ar[d]^{\bp \tiny 1-\gamma(e_i)\ep}&&&
{\ba{r} \oplus e_i^-\Z[\Gamma]  \\ \oplus e_i^+\Z[\Gamma]
\\ \oplus e_i'\Z[\Gamma] \\ p \Z[\Gamma] \ea }\ar[d]^-{\tiny \bp 1-\gamma(e_i)&0&0&\hfill 1 \\ 0&1-\gamma(e_{i})&0&-1 \ep} \\
 v\Z[\Gamma]\ar[rrr]_{\tiny \bp 1\\ 0\ep}&&&
{ \ba{r}v^- \Z[\Gamma]\\ v^+\Z[\Gamma]. \ea } 
}
\]
It  follows easily that $\i_-\colon H_1(S;\zg)\to H_1(Y;\zg)$ is an isomorphism if
\[ \oplus e_i^-\Z[\Gamma] \to \mbox{Coker}\left(  \bp R^- \\ R^+\\ R'\ep:\Z[\Gamma]\to \ba{r} \oplus e_i^-\Z[\Gamma]  \\ \oplus e_i^+\Z[\Gamma]
\\ \oplus e_i'\Z[\Gamma]  \ea  \right) \]
is an isomorphism. But this is clearly an isomorphism  since by Lemma~\ref{lem:rinv} the matrix $\bp R^+\\R'\ep$ is invertible.

An almost identical argument as above  shows that
 $H_1(X;\zphig)=0$ together with $\deg_\phi(\tau(X;\K(\G)))=\|\phi\|_T$
implies that $\i_+\colon H_1(S;\zg)\to H_1(Y;\zg)$ is an isomorphism.

This concludes the proof of Theorem~\ref{thm:iso-intro} (3) in the  case that $N$ has boundary.

The case that $N$ is closed is proved in a very similar fashion. The only  catch is that the diagrams get extended by one, which makes them even harder to navigate for the reader.


\begin{thebibliography}{10}
\bibitem[Bi75]{Bi75}
R. Bieri, {\em Mayer-Vietoris Sequences for HNN-Groups and Homological Duality}, Math. Z. 143 (1975), 123--130.
\bibitem[Bi07]{Bi07}
R. Bieri, {\em Deficiency and the geometric invariants of a group},  J. Pure Appl. Algebra {208} (2007), 951--959.
\bibitem[BNS87]{BNS87}
R. Bieri, W. D. Neumann and  R. Strebel, {\em A geometric invariant of discrete groups}, Invent.
Math. 90 (1987), 451--477.
\bibitem[Co04]{Co04}
T. Cochran,  {\em Noncommutative knot theory}, Algebr. Geom. Topol. 4 (2004), 347--398.
\bibitem[CT08]{CT08}
T. Cochran and T. Kim, {\em  Higher-order Alexander invariants and filtrations of the knot concordance group},
Trans. Amer. Math. Soc. 360 no. 3 (2008), 1407--1441.
\bibitem[DLMSY03]{DLMSY03}
J. Dodziuk, P. Linnell, V. Mathai, T. Schick and S. Yates, {\em Approximating $L^2$-invariants,
and the Atiyah conjecture}, Preprint Series SFB 478 M\"unster, Germany. Communications on Pure
and Applied Mathematics 56 (2003), 839--873.
\bibitem[EL83]{EL83} A. Edmonds and  C. Livingston, {\em Group actions on fibered three-manifolds}, Comment.
Math. Helv. 58 (1983), no. 4, 529--542.
\bibitem[Fr07]{Fr07}
S. Friedl, {\em Reidemeister torsion, the Thurston norm and Harvey's invariants},
Pac.  J. Math. 230 (2007), 271--296.
\bibitem[Fr14]{Fr14}
S. Friedl,  {\em  Twisted Reidemeister torsion, the Thurston norm and fibered manifolds}, 
Geometriae Dedicata 172 (2014), 135-145. 
\bibitem[FH07]{FH07} S. Friedl and S. Harvey, {\em
Non--commutative Reidemeister torsion and higher order Alexander norms}, Alg. Geom. Top. 7 (2007), 755--777.
\bibitem[FK06]{FK06}
S. Friedl, T. Kim, \emph{Thurston norm, fibered manifolds and twisted Alexander
polynomials}, Topology 45 (2006), 929-953.
\bibitem[FK08a]{FK08a}
S. Friedl and T. Kim, {\em The parity of the Cochran-Harvey invariants of
3-manifolds},
Trans. Amer. Math. Soc. 360 (2008), 2909-2922.
\bibitem[FK08b]{FK08b}
S. Friedl and T. Kim, {\em
Twisted Alexander norms give lower bounds on the Thurston norm},
Trans. Amer. Math. Soc. 360 (2008),  4597--4618. 
\bibitem[FKK12]{FKK12}
S. Friedl, T. Kim and T. Kitayama,
{\em Poincar\'e duality and degrees of twisted Alexander polynomials},  Indiana Univ. Math. J. 61 (2012), 147--192.
\bibitem[FL16]{FL16}
S. Friedl and W. L\"uck, {\em $L^2$-Euler characteristics and the Thurston norm}, preprint (2016)
\bibitem[FST15]{FST15}
S. Friedl, K. Schreve and S. Tillmann, 
{\em Thurston norm via Fox calculus}, preprint (2015)
 \bibitem[FV10]{FV10}
S. Friedl and S. Vidussi,
 {\em A survey of twisted Alexander polynomials}, 
The Mathematics of Knots: Theory and Application  (2010), 45-94.
 \bibitem[FV11]{FV11}
S. Friedl and S. Vidussi,
 {\em Twisted Alexander polynomials detect fibered 3--manifolds}, Ann. of  Math. 173 (2011), 1587-1643.
\bibitem[Ha05]{Ha05}
S. Harvey, {\em Higher--order polynomial invariants of 3--manifolds giving lower bounds for the
Thurston norm}, Topology 44 (2005), 895--945.
\bibitem[Ha06]{Ha06}
S. Harvey, {\em Monotonicity of degrees of generalized Alexander polynomials of groups and 3-manifolds},  Math. Proc. Cambridge Philos. Soc.  140  (2006),  no. 3, 431--450
\bibitem[HS97]{HS97}
P. J. Hilton and U. Stammbach, {\em A course in homological algebra}, Graduate Texts in Mathematics 4, second edition, 1997.

\bibitem[Ho14]{Ho14}
P. Horn, {\em On computing the first higher-order Alexander modules of knots}, 
Experimental Mathematics 23 (2014), 153--169.
\bibitem[Ka69]{Ka69}
I. Kaplansky, {\em Fields and rings}, The University of Chicago Press, 1969.
\bibitem[Ko06]{Ko06}
D. H. Kochloukova, {\em  Some Novikov rings that are von Neumann finite}, Comment.
Math. Helv. 81 (2006), 931--943.
\bibitem[KLM88]{KLM88}
P. H. Kropholler, P. A. Linnell and J. A. Moody, {\em Applications of a new $K$-theoretic theorem to
soluble group rings}, Proc. Amer. Math. Soc.  {104} (1988),  no. 3, 675--684.
\bibitem[La99]{La99}
T. Y. Lam, {\em Lectures on modules and rings}, Grad. Texts in Math. 189, Springer, 1999.

\bibitem[LM06]{LM06}
C. Leidy and L. Maxim, {\em Higher-order Alexander invariants of plane algebraic curves},
 Int. Math. Res. Not.  2006 (2006), Article ID 12976, 23 pages.
\bibitem[LM08]{LM08}
C. Leidy and L. Maxim, {\em
Obstructions on fundamental groups of plane curve complements},
Real and Complex Singularities, Cont. Math. 459 (2008), 117--130.

\bibitem[Le77]{Le77} J. Levine, {\em Knot modules. I}, Trans. Amer. Math. Soc. 229 (1977), 1--50.
\bibitem[L\"u02]{Lu02} W. L\"uck, {\em
 $L\sp 2$-invariants: theory and applications to geometry and $K$-theory}, Ergebnisse
der Mathematik und ihrer Grenzgebiete. 3. Folge. A Series of Modern Surveys in Mathematics, 44.
Springer-Verlag, Berlin, 2002.
\bibitem[McC01]{McC01}
J. McCleary, {\em A user’s guide to spectral sequences}, 2nd edition.
Cambridge Studies in Advanced Mathematics. 58. Cambridge University Press (2001).
\bibitem[McM02]{McM02}
C. T. McMullen, {\em The Alexander polynomial of a 3--manifold and the Thurston norm on
cohomology}, Ann. Sci. Ecole Norm. Sup. (4) 35, no. 2: 153--171 (2002)
\bibitem[Mi62]{Mi62}
J. Milnor, {\em A duality theorem for Reidemeister torsion},
Ann. of Math. (2) 76 (1962), 137--147.
\bibitem[Mi66]{Mi66}
J. Milnor, {\em Whitehead torsion}, Bull. Amer. Math. Soc. 72 (1966) 358--426.



\bibitem[Nem79]{Nem79}
W. Neu\-mann, {\em  Normal subgroups with infinite cyclic quotient}, Math. Sci. {4} (1979), no. 2, 143--148.
\bibitem[New60]{New60}
L. P. Neuwirth, {\em The algebraic determination of the genus of knots},
Am. J. Math. 82 (1960), 791--798.
\bibitem[New65]{New65}
L. P. Neuwirth, {\em Knot groups},
Annals of Mathematics Studies. Nr. 56. Princeton University Press. VI (1965).

\bibitem[No81]{No81} S. P. Novikov, {\em Multivalued functions and functionals. An analogue of the Morse theory}, Dokl. Akad. Nauk SSSR 260 (1981), 31--35; English transl. in Soviet Math Dokl. 24 (1981)
\bibitem[Pa06]{Pa06}
A. V. Pajitnov, {\em Novikov homology, twisted Alexander polynomials, and Thurston cones},  Algebra i Analiz  18  (2006),  no. 5, 173--209.
\bibitem[Ran95]{Ran95}
A. Ranicki, {\em Finite domination and Novikov rings},  Topology  34  (1995),  no. 3, 619--632.
\bibitem[Rap60]{Rap60}
E. Rapaport, {\em On the commutator subgroup of a knot group},
Ann. Math.  71 (1960), 157--162.
\bibitem[Roe03]{Roe03}
J. Roe, {\em Lectures on coarse geometry}, 
University Lecture Series 31. American Mathematical Society (2003).
\bibitem[Ros94]{Ros94}
J. Rosenberg, {\em Algebraic K -theory and its applications}, Graduate Texts in Mathematics 147, Springer, New York, 1994.

\bibitem[Si87]{Si87}
J.-Cl. Sikorav, {\em Th\`ese}, Universit\'e Paris-Sud, 1987.
\bibitem[Si15]{Si15}
J.-Cl. Sikorav, {\em On Novikov homology},  survey, in preparation (2015)
\bibitem[St61]{St61} J. Stallings, {\em On fibering certain $3$-manifolds}, 1962 Topology of 3-manifolds and
related topics, Proc. The Univ. of Georgia Institute (1961),
95--100
\bibitem[Str84]{Str84}
R. Strebel, {\em Finitely presented soluble groups},  in Group Theory:  Essays for Philip Hall, Academic Press, London, 1984.
\bibitem[Th86]{Th86}
W. P. Thurston, {\em A norm for the homology of 3--manifolds}, Mem. Amer. Math. Soc. 59, no.
339, i--vi and 99--130 (1986)
\bibitem[Tu86]{Tu86} V. Turaev, {\em
Reidemeister torsion in knot theory},
 Russian Math. Surveys 41 (1986), no. 1, 119--182.
\bibitem[Tu01]{Tu01}
V. Turaev, {\em Introduction to Combinatorial Torsions}, Lectures in Mathematics, ETH Z\"urich
(2001)
\bibitem[Tu02]{Tu02}
V. Turaev, {\em A homological estimate for the Thurston norm}, unpublished paper (2002),
arXiv:math.~GT/0207267
\end{thebibliography}
\end{document}